\newcommand{\ii}{\mathrm{i}}
\newcommand{\ee}{\mathrm{e}}
\newcommand{\dd}{\mathrm{d}}
\newcommand{\NN}{\mathbb{N}}                                     %
\newcommand{\RR}{\mathbb{R}}                                     %
\newcommand{\ZZ}{\mathbb{Z}}                                     %
\newcommand{\QQ}{\mathbb{Q}}                                     %
\newcommand{\CC}{\mathbb{C}}                                   
\newcommand{\PP}{\mathbb{P}}
\newcommand{\EE}{\mathbb{E}}
\newcommand{\KK}{\mathbb{F}}
\renewcommand{\SS}{\mathbb{S}}
\newcommand{\fall}{\:\forall\:}                                  %
\newcommand{\abs}[1]{\left\lvert#1\right\rvert}                  %
\newcommand{\mnorm}[1]{\left\lVert#1\right\rVert}                %
\newcommand{\defeq}{\mathrel{\mathop:}=}                               %
\newcommand{\norel}{\mathrel{\phantom{=}}}                                           %
\newcommand{\iid}{\overset{\mathrm{i.i.d.}}{\sim}}
\newcommand{\elll}{\boldsymbol{\ell}}
\renewcommand{\tilde}{\widetilde}
\newcommand{\kk}{\mathbf{k}}
\newcommand{\prs}{\mathrm{Pr}_s}
\newcommand{\rdl}{\paul{\mathcal{R}_{d,\ell}}}
\newcommand{\rrdl}{\paul{\mathcal{R}^\ast_{d,\ell}}}
\newcommand{\rndl}{\paul{\mathcal{R}^\ast_{n,d,\ell}}}
\newcommand{\rrndl}{\paul{\mathcal{R}_{n,d,\ell}}}
\newcommand{\rrndlq}{\paul{\mathcal{R}_{n,d,\ell}}}
\newcommand{\unif}{\mathrm{Unif}}
\newcommand{\mb}{\mathcal{M}_b}
\newcommand{\sob}{\mathcal{B}(W^{r,p}_d)}
\newcommand{\sobb}{\mathcal{B}(W^{r,\infty}_d)}
\newcommand{\sobbb}{\mathcal{B}(W^{r,p}_d(\CC))}
\newcommand{\profilefunction}{\paul{\varrho}}
\newcommand{\pnorm}[1]{\paull{\mnorm{#1}_{\ell^p}}}
\newcommand{\onenorm}[1]{\paull{\mnorm{#1}_{\ell^1}}}
\newcommand{\twonorm}[1]{\paull{\mnorm{#1}_{\ell^2}}}
\newcommand{\infnorm}[1]{\paull{\mnorm{#1}_{\ell^\infty}}}
\newcommand{\polyy}{\paull{\mathcal{P}}}
\newcommand{\paul}[1]{{\color{black}#1}}
\newcommand{\paull}[1]{{\color{black}#1}}
\newcommand{\metalambda}{%
  \mathop{%
    \rlap{$\lambda$}%
    \mkern2mu
    \raisebox{.275ex}{$\lambda$}%
  }%
}
\DeclareMathOperator{\spann}{span}
\DeclareMathOperator{\supp}{supp}
\DeclareMathOperator{\rk}{rank}
\DeclareMathOperator{\sgn}{sgn}
\DeclareMathOperator{\diag}{diag}
\DeclareMathOperator{\IM}{Im}
\DeclareMathOperator{\bin}{bin}
\DeclareMathOperator{\RE}{Re}
\DeclareMathOperator{\esssup}{ess \ sup}
\let \eps \varepsilon
\let \epsilon \varepsilon
\theoremstyle{plain} 
\newtheorem{theorem}{Theorem}[section]
\newtheorem{corollary}[theorem]{Corollary}
\newtheorem{lemma}[theorem]{Lemma}
\newtheorem{proposition}[theorem]{Proposition}
\theoremstyle{definition} %
\theoremstyle{remark} %
\newtheorem{remark}[theorem]{Remark}
\crefname{theorem}{Theorem}{Theorems}
\crefname{Prop}{Proposition}{Propositions}
\crefname{Lem}{Lemma}{Lemmas}
\crefname{Kor}{Corollary}{Corollaries}
\crefname{Bem}{Remark}{Remarks}
\crefname{Bsp}{Example}{Examples}
\crefname{Def}{Definition}{Definitions}
\crefname{Alg}{Algorithm}{Algorithms}
\numberwithin{equation}{section}
\renewcommand*{\eqref}[1]{%
  \hyperref[{#1}]{\textup{\tagform@{\ref*{#1}}}}%
}
  \DeclareTextCommandDefault\textcommabelow[1]
\hmode@bgroup\ooalign{\null#1\crcr\hidewidth\raise-.31ex
     \hbox{\check@mathfonts\fontsize\ssf@size\z@
     \math@fontsfalse\selectfont,}\hidewidth}\egroup}%
\newcommand*{\fres}[2]{ {\left.\kern-\nulldelimiterspace #1 \vphantom{\big|} \right|_{\kern-1pt #2} }}
\begin{document}
\allowdisplaybreaks

\hyphenation{Lip-schitz}

\title[On best approximation by multivariate ridge functions]{On best approximation by multivariate ridge functions \\ with applications to generalized translation networks}

\author{Paul Geuchen}
\address[P. Geuchen]{Mathematical Institute for Machine Learning and Data Science (MIDS),
Catholic University of Eichstätt--Ingolstadt (KU),
Auf der Schanz 49, 85049 Ingolstadt, Germany}
\email{paul.geuchen@ku.de}
\thanks{}

\author{Palina Salanevich}
\address[P. Salanevich]{Mathematical Institute,
Utrecht University,
Budapestlaan 6,
3584 CD Utrecht,
Netherlands}
\email{p.salanevich@uu.nl}
\thanks{}

\author{Olov Schavemaker}
\address[O. Schavemaker]{Mathematical Institute,
Utrecht University,
Budapestlaan 6,
3584 CD Utrecht,
Netherlands}
\email{o.p.schavemaker@uu.nl}
\thanks{}

\author{Felix Voigtlaender}\thanks{}
\address[F. Voigtlaender]{Mathematical Institute for Machine Learning and Data Science (MIDS),
Catholic University of Eichstätt--Ingolstadt (KU), Auf der Schanz 49, 85049 Ingolstadt, Germany}
\email{felix.voigtlaender@ku.de}

\subjclass[2020]{41A30, 41A25, 41A63, 46E35, 68T07}

\keywords{Multivariate ridge functions, Sobolev functions, Shallow neural networks, Complex-valued neural networks, Approximation rates}

\date{\today}

\begin{abstract}    
\paul{In this paper, }we prove sharp upper and lower bounds for the approximation of Sobolev functions by sums of \emph{multivariate ridge functions}, i.e., \paul{for approximation by} functions of the form 
$\RR^d \ni x \mapsto \sum_{k=1}^n \profilefunction_k(A_k x) \in \RR$ with $\profilefunction_k : \RR^\ell \to \RR$ and $A_k \in \RR^{\ell \times d}$. 
We show that the order of approximation asymptotically behaves as $n^{-r/(d-\ell)}$, where $r$ is the regularity \paul{(order of differentiability)} of the Sobolev functions to be approximated. 
Our lower bound even holds when approximating $L^\infty$-Sobolev functions of regularity $r$ with error measured in $L^1$,
while our upper bound applies to the approximation of $L^p$-Sobolev functions in $L^p$ for any $1 \leq p \leq \infty$.
These bounds generalize well-known results \paul{regarding} the approximation properties of \emph{univariate} ridge functions to the multivariate case. 
We use \paul{our results} to obtain sharp asymptotic bounds for the approximation of Sobolev functions using generalized translation networks and
complex-valued neural networks. 
\end{abstract}
\maketitle

\newcommand{\lploc}{L^p_{\mathrm{loc}}}
\section{Introduction}
Ridge functions --- both univariate and multivariate --- and their linear combinations have received 
significant attention in the mathematical literature \cite{pinkus_ridge_2016,ismailov2021ridge,candes2003ridgelets,lin1993fundamentality},
in particular due to their applications to neural networks and related areas \cite{10.1093/imaiai/iaaa036,maiorov1999lower}.
 A (multivariate) ridge function on $\RR^d$ is defined as a composition of the form $\profilefunction \circ A$ with $\profilefunction: \RR^\ell \to \RR$ and $A \in \RR^{\ell \times d}$, where $\ell < d$. 
 In the case when $\ell = 1$, we refer to such a ridge function as univariate. 
 A topic of particular interest is the approximation capacity of linear combinations of ridge functions. 
 Although this question has been extensively
studied in the case of \emph{univariate} ridge functions \cite{pinkus1997approximating,ismailov2013review,maiorov_best_1999,maiorov2010best,gordon2001best}, 
the case of \emph{multivariate} ridge functions has largely remained unexplored until now.

In this paper, we establish asymptotically tight bounds for the approximation of Sobolev functions by sums of multivariate ridge functions,
more precisely by sums of $\ell$-variate ridge functions for any fixed $1 \leq \ell < d$. 
Specifically, we show that for every $L^p$-Sobolev function with smoothness \paul{(order of differentiability)} $r$ defined on the unit ball in $\RR^d$, 
there exists a sum of $n$ multivariate ridge functions such that the error of approximation in the $L^p$-norm is asymptotically 
$n^{-r/(d-\ell)}$, where $p \in [1,\infty]$. 
We furthermore prove that this rate is \emph{optimal}. 

Moreover, we show that our results for multivariate ridge functions
can be used to derive tight asymptotic bounds for the approximation of Sobolev functions by \emph{generalized translation networks} (GTNs) 
and \emph{complex-valued} neural networks (CVNNs).
A (shallow) generalized translation network with activation dimension $\ell$ is a function of the form
\begin{equation}
  \RR^d \ni x \mapsto \sum_{i=1}^n c_i\,\paul{\tau}(A_i\,x + v_i) \in \RR,
  \label{eq:IntroGTN}
\end{equation}
with matrices $A_i \in \RR^{\ell \times d}$, bias vectors $v_i \in \RR^\ell$,
coefficients $c_i \in \RR$ and a fixed activation function $\paul{\tau}: \RR^\ell \to \RR$. 
Note that one obtains classical shallow neural networks in the case $\ell = 1$.
Both generalized translation networks and complex-valued neural networks 
recently have received increased attention from both theoretical and applied perspectives; 
see \cite{bassey2021survey,lee2022complex,mhaskar1995degree,mhaskar1995degree,mhaskar1997choice} and the references therein.

Our results demonstrate that shallow GTNs with higher activation dimensions achieve a strictly better optimal approximation rate than GTNs with a smaller activation dimension. 
Moreover, we show that there exists a (nice) complex activation function $\paul{\phi}:\CC \to \CC$ 
such that shallow CVNNs can approximate Sobolev functions of smoothness $r$ on the unit ball of $\CC^d$ with rate $n^{-r/(2d-2)}$, 
whereas the best rate that can be achieved by shallow \emph{real-valued} neural networks is $n^{-r/(2d-1)}$.
We also show that the rate of $n^{-r/(2d-2)}$ is optimal in the class of all shallow CVNNs. 

\subsection{Related work}
\label{subsec:related}

In this subsection, we discuss how our results relate to the existing literature,
both regarding ridge functions and regarding various classes of neural networks.

\subsubsection{Approximation by sums of univariate ridge functions}

The approximation of Sobolev functions by sums of \emph{univariate} ridge functions was extensively studied by
V.~Maiorov and his co-authors, see \cite{maiorov_best_1999,maiorov2010best,gordon2001best}. 
As a first result in this direction, the sharp bound of $n^{-r/(d-1)}$ was shown
for the approximation of $L^2$-Sobolev functions of regularity $r$ on 
the $d$-dimensional unit ball, with error measured in the $L^2$-norm \cite{maiorov_best_1999}.
Two years later, this result was extended to the approximation of $L^p$-Sobolev functions
of regularity $r$, with error measured in the $L^q$-norm for arbitrary $p \geq q \geq 2$;
see \cite{gordon2001best}.
Finally, in 2010 the paper \cite{maiorov2010best} showed the same sharp bound of $n^{-r/(d-1)}$
for every $p \geq q \geq 1$.

The techniques used in the present paper rely to a large extent on the techniques used in \cite{maiorov2010best},
combined with a generalization of a result from \cite{maiorov_best_1999}.
In the process of writing the present paper, we discovered that
the proof presented in \cite{maiorov2010best} contains a gap,
which we discuss in detail in \Cref{sec:disc}.
The present work resolves the issue in the proof in \cite{maiorov2010best}
by replacing an orthogonal projection onto a certain space of polynomials by a \emph{quasi-projection} defined in \Cref{sec:ortho}. 

\subsubsection{Shallow neural networks with general activation function}

In \cite{maiorov1999lower}, it is shown that there exists \paul{an analytic}, sigmoidal
(but quite bespoke and somewhat pathological) activation function
$\paul{\tau}: \RR \to \RR$ with the property that every sum of $n$ univariate ridge functions
can be approximated up to arbitrary precision using shallow 
neural networks with activation function $\paul{\tau}$ and $3n$ neurons. 
Combining this with the results presented in \cite{gordon2001best,maiorov2010best} then readily implies
that shallow neural networks with $n$ neurons and the bespoke activation function $\paul{\tau}$ 
can approximate Sobolev functions of regularity $r$ at a rate of $n^{-r/(d-1)}$.
In this work, we extend this statement to the case of generalized translation networks
(as in \Cref{eq:IntroGTN}); see \Cref{thm:main_3}.

\subsubsection{Shallow neural networks and continuous weight selection}

In \cite{mhaskar1996neural}, Mhaskar proves an upper bound of $n^{-r/d}$
for approximating Sobolev functions of regularity $r$ using shallow neural networks
with \emph{any} smooth but non-polynomial activation function. 
In fact, \cite{mhaskar1996neural} does not only consider classical shallow neural networks
but also generalized translation networks (see \Cref{eq:IntroGTN}).
It is then shown that, for a fixed activation dimension $\ell$, the rate of $n^{-r/d}$
is \emph{optimal} under the assumption of \emph{continuous weight selection}, i.e., 
assuming that the map that assigns to a Sobolev function $f$ the coefficients
of the approximating network is continuous. 
Specifically, this shows that under the restriction of continuous weight selection,
there is \emph{no} benefit (in terms of the approximation rate) in choosing
a generalized translation network instead of a classical shallow network.
Our work shows that, if one drops this continuity assumption,
one can indeed \emph{improve} the approximation rate from $n^{-r/(d-1)}$ \paull{(for classical shallow neural networks)}
to $n^{-r/(d-\ell)}$ using a generalized translation network with activation dimension $\ell$, 
at least for a specific choice of the activation function; see \Cref{thm:main_3}.

\subsubsection{Deep neural networks with general activation function}

With respect to neural networks, the results derived in the present paper
can be used to determine the optimal rates of approximating Sobolev functions of regularity $r$,
where optimality is considered with respect to the class of all \emph{shallow} (real- or complex-valued)
neural networks \emph{with general (continuous) activation function}.
This is a novel result for the case of complex-valued neural networks,
and recovers known results for the case of real-valued networks.

In the setting of arbitrary (continuous) activation functions considered here,
the question of optimal approximation rates is only meaningful for \emph{shallow} networks,
since as a consequence of the so-called \emph{Kolmogorov-Arnold representation theorem}
(see \cite[Theorem 5]{maiorov1999lower}) one can show that using the
analytic, sigmoidal (but bespoke and pathological) activation function $\paul{\tau}$
mentioned above, \emph{every} continuous function can be approximated \emph{arbitrarily well}
by neural networks \emph{of constant size} with two hidden layers.
It is easy to see that an analogous result then also holds for networks with more than two hidden layers.
Recently, it was shown that similar results (for networks with $11$ hidden layers)
also hold when using a slightly less "contrived" (but still impractical)
activation function \cite{zhang2022deep}.
These results thus show that no meaningful lower bounds can be derived
in the class of \emph{deep} neural networks with \emph{general} activation function.

\subsubsection{Deep neural networks with ReLU activation function}

In \paull{modern deep learning}, the most commonly used activation function is the ReLU (rectified linear unit),
which has thus been the focus of intensive research.
For this special activation function, one can bound the VC dimension of the class of all ReLU networks
of a given size \cite{bartlett2019nearly}, and this can be used to derive lower bounds for the
approximation of Sobolev functions in terms of the number of weights or neurons,
even for arbitrarily deep networks and without assuming continuous weight selection \cite{yarotsky2020phase}.
Thus, the above phenomenon of "infinite approximation power" of networks with two hidden layers
does \emph{not} occur for the ReLU.
Moreover, for the case where $r / d \gg 1$,
it has been shown in \cite{safran2017depth,petersen2018optimal} that \emph{shallow} ReLU networks
cannot achieve the rate of $n^{-r/d}$ for approximating Sobolev functions of regularity $r$.
This is an instance of the general phenomenon that deeper ReLU networks can achieve
better approximation rates than shallow networks \cite{telgarsky2016benefits},
at least when the functions to be approximated are very smooth \cite{safran2017depth,petersen2018optimal}.

Optimal upper bounds that match the lower bounds (in some cases up to log factors)
have been derived in \cite{yarotsky2020phase,SHEN2022101,CiCP-28-5}.
In particular, it has been shown in \cite{yarotsky2020phase} that for arbitrarily deep ReLU networks
with unconstrained weights one can obtain a strictly better approximation rate
than using networks with constrained weights (i.e., assuming continuous weight selection,
or assuming that the magnitude of the weights is polynomially bounded
in terms of the number of neurons of the network).

Overall, these results show that in order to fully understand the "landscape of approximation results"
for special, elementary activation functions such as the ReLU,
an in-depth analysis is needed for each activation function.
Our bound in the class of shallow networks with \emph{general} activation function
then serves as a benchmark that can be realized using some activation functions
(such as those in \cite{maiorov1999lower,zhang2022deep}),
but not generically with functions such as the ReLU.
Similarly, not every complex activation function can
achieve the corresponding approximation bound of $n^{-r/(2d-2)}$\paul{; see, e.g., \cite[Theorem~4.3]{geuchen2024optimal}}.
This helps explain why the activation function that we construct for attaining the optimal rate
in the case of shallow complex-valued networks (see \Cref{thm:cvnnup}) is relatively impractical.

\subsubsection{Complex-valued neural networks}

While the previously discussed results deal with real-valued neural networks,
in recent years there has been a growing interest in establishing approximation
theoretical results for complex-valued neural networks (CVNNs).
Classical statements about the universality of real-valued neural networks were generalized
to the complex-valued setting in \cite{voigtlaender2023universal,geuchen2023universal}.
Furthermore, the quantitative statement \paull{for real-valued neural networks} from \cite{mhaskar1996neural} was generalized
to the complex-valued setting in \cite{geuchen2024optimal}.
The same paper also proves the existence of a complex-valued activation function $\phi: \CC \to \CC$
with the property that shallow complex-valued neural networks with that activation function
achieve an approximation rate of $n^{-r/(2d-1)}$ for Sobolev functions of smoothness $r$ on the unit ball 
in $\CC^d$.
In the present work, we prove that the same activation function in fact achieves
the optimal approximation rate of $n^{-r/(2d-2)}$, see \Cref{thm:main_4}.
Note that the optimal rate achievable by real-valued networks is $n^{-r / (2d - 1)}$,
as follows by identifying $\CC^d \cong \RR^{2d}$.
Thus, while the quantitative approximation bounds established for CVNNs so far
showed \emph{similar} approximation theoretical properties for complex-valued and real-valued neural networks,
the result derived in this paper indicates a \emph{superior} expressivity of CVNNs.

\subsection{Notation}

By $\ZZ$ we denote the set of integers. 
We use $\NN$ to denote the positive integers excluding $0$ and let $\NN_0 \defeq \NN \cup \{0\}$. 
For $x \in \RR^d$ and $p \in [1,\infty)$, we denote by 
$\pnorm{x} \defeq \left(\sum_{i=1}^d \abs{x_i}^p\right)^{1/p}$ the $\ell^p$-norm of the vector $x$
and by $\infnorm{x} \defeq \underset{j = 1,\dots,d}{\max} \abs{x_j}$ the $\ell^\infty$-norm of $x$.
We write $B^d \defeq \{x \in \RR^d :\ \twonorm{x} \leq 1\}$ for the closed $d$-dimensional Euclidean unit ball. 
Moreover, we let $\SS^{d-1} \defeq \{x \in \RR^d :\ \twonorm{x} = 1\}$ denote the unit sphere in $\RR^d$. 
\paul{We write $\metalambda^d$ for the $d$-dimensional Lebesgue measure.}
\paull{The symbol} $I_{d \times d}$ denotes the $d$-dimensional identity matrix. 
\paull{We denote the set of all (real) orthogonal $d \times d$ matrices by $O(d)$.}
For measurable functions $f,g: \ B^d \to \RR$, we let
\begin{equation} \label{eq:inprod}
\langle f,g \rangle \defeq \int_{B^d} f(x) \cdot g(x) \ \dd x,
\end{equation}
if the expression on the right-hand side is well-defined, i.e., if $f\cdot g \in L^1(B^d)$.
By $\sob$ we denote the set of all \emph{Sobolev functions} $f: \ B^d \to \RR$ for which $\mnorm{f}_{W^{r,p}_d} \leq 1$,
where $\Vert \cdot \Vert_{W_d^{r,p}}$ is defined as in \eqref{eq:sob_def}.
We refer to \Cref{sec:sobolev} for more details on the class of Sobolev functions.  

Let $\KK \in \{\RR, \CC\}$. 
For a measurable subset $K \subseteq \KK^d$, a measurable function $f: K \to \KK$ and a number $p \in [1,\infty)$, we define 
\[
\mnorm{f}_{L^p(K)} \defeq \left( \int_{K} \abs{f(x)}^p \ \dd x \right)^{1/p}
\quad \text{and} \quad \mnorm{f}_{L^\infty(K)} \defeq \underset{x \in K}{\esssup} \abs{f(x)},
\]
where $\esssup$ denotes the \emph{essential supremum}. 
For $p \in [1,\infty]$ and a measurable subset $K \subseteq \KK^d$, we let
\[
L^p(K; \KK) \defeq \left\{ f:\ K \to \KK: \ f \text{ measurable with} \quad \mnorm{f}_{L^p(K)} < \infty\right\}.
\]
We define 
\[
\lploc(\KK^d;\KK) \defeq \left\{ f:\ \KK^d \to \KK: \ f \text{ measurable and } \fres{f}{K} \in L^p(K;\KK) \text{ for every compact } K \subseteq \KK^d\right\}.
\]
Moreover, we set
\[
\mb(\KK^d;\KK) \defeq \left\{ f: \ \KK^d \to \KK: \ f \text{ measurable and bounded on every compact } K \subseteq \KK^d\right\}.
\]
Note here that the definition of $\mb(\KK^d;\KK)$ does not involve \emph{essential} boundedness but genuine boundedness on every compact set. 
In the case $\KK = \RR$, we often omit the second argument in $L^p(K; \KK)$, $\lploc(\KK^d;\KK)$ and $\mb(\KK^d;\KK)$.
For a subset $A \subseteq \KK^d$, we denote by $A^\circ$ its interior with respect to the usual Euclidean topology. 

For a multiindex $\kk \in \NN_0^d$ and a vector $x \in \KK^d$, we define 
\[
\abs{\kk} \defeq \sum_{j=1}^d \kk_j, \quad \kk ! \defeq \prod_{j=1}^d \kk_j ! \quad \text{and} \quad x^{\kk} \defeq \prod_{j=1}^d x_j^{\kk_j}.
\]
For a natural number $s \in \NN_0$ and a set $K \subseteq \RR^d$, we let 
\[
\polyy_s(K) \defeq \left\{ K \ni x \mapsto  \underset{\abs{\kk} \leq s}{\sum_{\kk \in \NN_0^d}} a_{\kk}x^{\kk} :\ a_{\kk} \in \RR\right\}
\]
denote the set of (real) polynomials on $K$ of degree at most $s$ and write $\polyy(K) \defeq \bigcup_{s \in \NN_0} \polyy_s(K)$ for the set of polynomials on $K$. 
\subsection{Main results}
In this section, we present the main results derived in the present paper. 
We start by stating the lower bound for the approximation of Sobolev functions by sums of multivariate ridge functions. 
\begin{theorem}\label{thm:main_1}
Let $d,\ell,r \in \NN$ with $d > \ell$ and $p,q \in [1,\infty]$.
Then there exists a positive constant ${c= c(d,\ell,p,q,r)>0}$ with the following property:
For every $n \in \NN$ there exists a Sobolev function ${f \in \sob}$ such that for every choice of functions $\profilefunction_1,\dots,\profilefunction_n : \RR^\ell \to \RR$
and matrices ${A_1,\dots,A_n \!\in\! \RR^{\ell \times d}}$ with the property that $\profilefunction_k(A_k \cdot \bullet) \in L^{1}(B^d)$ for every $k \in \{1,\dots,n\}$,
we have 
\[
\mnorm{f(x) - \sum_{k=1}^n \profilefunction_k(A_k x)}_{L^q(B^d)} \geq c \cdot n^{-r/(d-\ell)}.
\]
\end{theorem}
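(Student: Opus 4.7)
The plan is to execute the dimension-counting lower bound of Maiorov in the univariate case \cite{maiorov_best_1999, maiorov2010best}, adapted to multivariate ridges and using the quasi-projection from \Cref{sec:ortho} to plug the gap present in \cite{maiorov2010best}. First, by H\"older's inequality on $B^d$ and the inclusion $\sobb \subseteq \sob$ valid for every $p \in [1,\infty]$, I would reduce to the case $q = 1$ and $p = \infty$. Then I would set $s \defeq \lfloor c_0 n^{1/(d-\ell)} \rfloor$ for a small $c_0 = c_0(d, \ell, r) > 0$, so that the dimension $\dim P_s(B^d) \asymp s^d$ exceeds the ``effective ridge dimension'' $n \binom{s+\ell}{\ell} \asymp n s^\ell$ by a controllable factor.

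The core tool will be a quasi-projection $\prs : L^1(B^d) \to P_s(B^d)$ from \Cref{sec:ortho} that restricts to the identity on $P_s(B^d)$ and whose operator norm $\|\prs\|_{L^1 \to L^1}$ is bounded independently of $s$. The decisive reason for switching from the $L^2$-orthogonal projection used in \cite{maiorov2010best} to such a quasi-projection is that in dimensions $d \geq 2$ the $L^1$-operator norm of the orthogonal projection onto $P_s(B^d)$ grows with $s$, which is what opens the gap in \cite{maiorov2010best}. Applying $\prs$ termwise to an $n$-term sum $g = \sum_{k=1}^n h_k(A_k\,\cdot)$ of multivariate ridges lands in a set $\mathcal{R}_{n,s,\ell} \subseteq P_s(B^d)$ which, for fixed matrices $A_1,\dots,A_n$, forms a linear subspace of dimension $\leq n \binom{s + \ell}{\ell}$ coming from the ridge-polynomial structure of $\prs(h_k(A_k\,\cdot))$, and which, as $(A_1,\dots,A_n)$ varies, is parametrized semi-algebraically using at most $n\bigl(\binom{s+\ell}{\ell} + \ell d\bigr)$ real numbers.

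Next I would apply a Borsuk-type parameter-counting argument on the centrally symmetric convex body $\{P \in P_s(B^d) : \|P\|_{L^1(B^d)} \leq 1\}$ inside the $\asymp s^d$-dimensional space $P_s(B^d)$ to produce a polynomial $P^\ast$ with $\|P^\ast\|_{L^1(B^d)} = 1$ and $\inf_{g \in \mathcal{R}_{n,s,\ell}} \|P^\ast - g\|_{L^1(B^d)} \geq c_1(d, \ell, r) > 0$, valid whenever $c_0$ is chosen small enough. By Bernstein's inequality for polynomials on $B^d$, the rescaled function $f \defeq c_2 s^{-r} P^\ast$ will lie in $\sobb$; the cone structure of $\mathcal{R}_{n,s,\ell}$ then yields
\[
\Bigl\|f - \sum_{k=1}^n h_k(A_k\,\cdot)\Bigr\|_{L^q(B^d)} \gtrsim \|f - g\|_{L^1(B^d)} \gtrsim \|\prs(f - g)\|_{L^1(B^d)} = \|f - \prs g\|_{L^1(B^d)} \gtrsim s^{-r} \asymp n^{-r/(d-\ell)}.
\]
The main obstacle will be turning the parameter-count inequality $\dim P_s(B^d) > $ (degrees of freedom in $\mathcal{R}_{n,s,\ell}$) into a \emph{quantitative} $L^1$-distance lower bound: $\mathcal{R}_{n,s,\ell}$ is a non-linear union of low-dimensional subspaces rather than a single linear subspace, so one cannot appeal directly to standard linear $n$-widths. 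This is precisely the step where \cite{maiorov2010best}'s argument is incomplete, and where both the $L^1$-boundedness of $\prs$ and its structural compatibility with ridge inputs (encoded in the definition of $\mathcal{R}_{n,s,\ell}$ above) become indispensable.
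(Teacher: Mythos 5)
The skeleton of your plan matches the paper's: reduce to $q=1$, $p=\infty$ by H\"older, take $s\asymp n^{1/(d-\ell)}$, and replace the orthogonal projection of \cite{maiorov2010best} by the quasi-projection $\prs$ with an $s$-independent $L^1\to L^1$ bound. But the decisive step is missing. You explicitly defer ``turning the parameter-count inequality into a quantitative $L^1$-distance lower bound'' to an unspecified Borsuk-type argument, and that deferral hides essentially all of the work. The paper's mechanism is a sign-vector counting argument: \Cref{lem:inner_product_expansion} writes $\langle g_A, P_i\rangle = \sum_h b_h(g)\,q_h(\sigma;P_i)$ with $q_h(\cdot\,;P_i)$ a \emph{polynomial} in an orthogonal matrix $\sigma$ attached to $A$ (this separation of the nonlinear matrix dependence from the linear dependence on the profile $g$ is itself a substantial computation via hyperspherical coordinates), so that the evaluation vectors $(P(\xi_1+t),\dots,P(\xi_m+t))$ with $P\in\prs(\rndl)$ form a polynomially parametrized family; the Warren-type bound of \Cref{lem:polynomial_manifold} then gives $\lvert\sgn(\Pi_{m,s,n,\ell,d})\rvert\le 2^{m/4}$, and \Cref{lem:epsbound} produces a sign vector $\eps^\ast$ at $\ell^1$-distance $\gtrsim m$ from the whole family. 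Without some such device, the dimension count alone yields nothing quantitative for a nonlinear union of subspaces. Note also that you misattribute the gap in \cite{maiorov2010best}: the parameter-counting step there is sound; the actual gap is the unjustified $L^1$ duality identity for the orthogonal projection (see \Cref{sec:disc}), which is exactly what the uniform $L^1$ bound on $\prs$ repairs in \Cref{lem:prob_wrong}.

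The second genuine problem is your construction of the hard function. You take $P^\ast$ with $\mnorm{P^\ast}_{L^1(B^d)}=1$ far from the ridge set and claim $c_2 s^{-r}P^\ast\in\sobb$ ``by Bernstein's inequality.'' On $B^d$ the relevant global estimate is Markov's inequality, $\mnorm{\partial^{\kk}P}_{L^\infty(B^d)}\lesssim s^{2\abs{\kk}}\mnorm{P}_{L^\infty(B^d)}$, and an $L^1$-normalized polynomial of degree $s$ can have $L^\infty$-norm of order $s^{2d}$ by Nikolskii's inequality; the rescaling needed to land in $\sobb$ is therefore of order $s^{-2r-2d}$ rather than $s^{-r}$, which destroys the claimed rate. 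The paper avoids this entirely: the hard function $f_{\eps^\ast}$ is a sum of $m\asymp s^d$ disjointly supported smooth bumps with signs $\eps^\ast_i$, scaled by $(2\vartheta)^{-r}$ with $\vartheta\asymp m^{1/d}$, for which membership in $\sobb$ is immediate (\Cref{prop:fmsubset}); polynomials enter only afterwards, via Jackson's theorem (\Cref{prop:jack}) and the triangle inequality, with $\prs$ applied to $g_\eps - R$ rather than to $f$ itself. So the bump construction is not a stylistic alternative to your polynomial $P^\ast$ but is what makes the $\sobb$-membership compatible with an $L^1$ separation of order $s^{-r}$.
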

We refer to \Cref{sec:low_bound} and specifically to \Cref{corr:low_final} for the proof. 
Remarkably, this lower bound holds under the weak assumption that $\profilefunction_k(A_k \cdot \bullet) \in L^{1}(B^d)$ for every $k \in \{1,\dots,n\}$.
This is, for example, satisfied whenever $A_k$ is full-rank and $\profilefunction_k \in L^{1}_{\mathrm{loc}}(\RR^\ell)$; see \Cref{prop:rank}.

The complementary upper bound can already be achieved by only considering polynomial functions and a \emph{fixed} choice of matrices 
$A_k \in \RR^{\ell \times d}$.

\begin{theorem}\label{thm:main_2}
Let $d, \ell,r \in \NN$ with $d > \ell$ and $1 \leq q \leq p \leq \infty$. 
Then there exists a positive constant $C = C(d,\ell, p,q,r) > 0$ with the following property:
For any $n \in \NN$ there exist matrices $A_1, \dots, A_n \in \RR^{\ell \times d}$ such that for any $f \in \sob$ there exist polynomials $P_1,\dots,P_n \in \polyy(\RR^\ell)$
 with 
\begin{equation*}
\mnorm{ f(x) - \sum_{k=1}^n P_k(A_kx)}_{L^q(B^d)} \leq C \cdot n^{-r/(d-\ell)}.
\end{equation*}
\end{theorem}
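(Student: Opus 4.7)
The plan is to split the approximation into two stages: Jackson-type polynomial approximation of Sobolev functions on $B^d$, followed by a decomposition of polynomials into multivariate ridge functions. Choose $s \in \NN$ as the largest integer satisfying $C_2 \cdot s^{d-\ell} \leq n$ for a constant $C_2 = C_2(d,\ell)$ to be specified below; then $s \geq c_0 \cdot n^{1/(d-\ell)}$ for some $c_0 = c_0(d,\ell) > 0$.

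For the first stage, a standard polynomial approximation theorem on $B^d$ yields, for every $f \in \sob$, a polynomial $P \in P_s(\RR^d)$ with $\mnorm{f-P}_{L^q(B^d)} \leq C \cdot s^{-r}$, where $C = C(d,p,q,r)$. The inequality $q \leq p$ ensures that $L^p(B^d) \hookrightarrow L^q(B^d)$ (since $B^d$ has finite measure), so an $L^p$-Sobolev polynomial approximation result on the ball delivers the same rate in $L^q$.

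The core of the proof is the second stage: for every $s \in \NN$ there exist $N = N(d,\ell,s) \leq C_2 \cdot s^{d-\ell}$ matrices $A_1,\dots,A_N \in \RR^{\ell \times d}$ (depending only on $s$, hence only on $n$) such that the linear map
\[
\Phi_s \colon \bigl(P_s(\RR^\ell)\bigr)^N \to P_s(\RR^d), \qquad (p_1,\dots,p_N) \mapsto \Bigl[x \mapsto \textstyle\sum_{k=1}^N p_k(A_k x)\Bigr]
\]
is surjective. A dimension count gives the necessary condition $N \cdot \binom{\ell+s}{\ell} \geq \binom{d+s}{d}$, which is satisfied as soon as $N$ exceeds a constant depending on $d,\ell$ times $s^{d-\ell}$. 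To exhibit such matrices, note that the image of $\Phi_s$ depends only on the row spaces $L_k \in \mathrm{Gr}(\ell,d)$ of the $A_k$, and equals $\sum_{k=1}^N V_{L_k}$, where $V_L \subset P_s(\RR^d)$ is the $\binom{\ell+s}{\ell}$-dimensional space of polynomials that factor through the quotient $\RR^d \twoheadrightarrow \RR^d/L^\perp \cong \RR^\ell$. Since $\dim \sum_k V_{L_k}$ is lower semi-continuous in $(L_1,\dots,L_N)$, it suffices to exhibit one tuple with maximal image dimension; I would establish this via a general-position argument (showing that the generic dimension saturates at $\binom{d+s}{d}$ once the count permits it), or via a multivariate analogue of the classical decomposition of polynomials into univariate ridge functions.

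Combining the two stages: if $p_1,\dots,p_N \in P_s(\RR^\ell)$ is any preimage of $P$ under $\Phi_s$, and we pad with the zero polynomial and arbitrary matrices up to index $n$, then
\[
\mnorm{f(x) - \sum_{k=1}^n p_k(A_k x)}_{L^q(B^d)} = \mnorm{f - P}_{L^q(B^d)} \leq C \cdot s^{-r} \leq C' \cdot n^{-r/(d-\ell)}.
\]
The main obstacle is the surjectivity claim in the second stage. The naive strategy of starting from the classical univariate ridge decomposition of $P$ into $\sim s^{d-1}$ summands and then grouping the directions into blocks of $\ell$ yields only $\sim s^{d-1}/\ell$ multivariate ridge pieces, which is far worse than $s^{d-\ell}$. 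This shows that one must genuinely exploit the full $\binom{\ell+s}{\ell}$-dimensional ridge space $V_L$ rather than just its $\ell(s+1)$-dimensional ``diagonal'' subspace, and proving that a generic family of $\sim s^{d-\ell}$ many $\ell$-dimensional subspaces yields $\sum_k V_{L_k} = P_s(\RR^d)$ is the genuinely non-trivial geometric input.
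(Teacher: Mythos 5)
Your two-stage structure (Jackson-type polynomial approximation followed by a decomposition of $P_s(\RR^d)$ into $\ell$-variate polynomial ridge functions with $N \lesssim s^{d-\ell}$ fixed matrices) matches the paper's proof exactly, and your reduction of the problem to the surjectivity of the map $\Phi_s$ is the right formulation. However, the surjectivity claim is precisely the content of the paper's \Cref{prop:up_1}, and you have not proven it: you offer only a necessary dimension count plus an appeal to lower semicontinuity and a hoped-for "general-position argument." This is a genuine gap, and not a routine one. Lower semicontinuity only tells you that the generic value of $\dim \sum_k V_{L_k}$ is the maximal one; it does not tell you that this maximum equals $\min\bigl(N\binom{\ell+s}{\ell}, \binom{d+s}{d}\bigr)$. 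Statements of the form "generic sums of parametrized subspaces attain the expected dimension" are exactly the kind of claim that can fail (compare the exceptional cases in the Alexander--Hirschowitz theorem for powers of linear forms), and indeed even in the univariate case $\ell=1$ the known sharp result requires $N = \dim P_s^h(\RR^d) \sim s^{d-1}/(d-1)!$ directions, a constant factor \emph{more} than the naive count $\binom{d+s}{d}/(s+1) \sim s^{d-1}/d!$ would suggest. So the expected dimension is not always attained, and your argument as stated does not close.

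The paper sidesteps the geometric question entirely with a concrete construction that you are missing. Write $\RR^d = \RR^{d-\ell+1} \times \RR^{\ell-1}$ with coordinates $(x,y)$. By the classical \emph{univariate} ridge decomposition (Pinkus, Prop.~5.9 and Cor.~5.12), one can choose $n \geq \dim P_s^h(\RR^{d-\ell+1}) \sim s^{d-\ell}$ vectors $a_1,\dots,a_n \in \RR^{d-\ell+1}$ such that every $p_{\kk} \in P_s(\RR^{d-\ell+1})$ is a sum $\sum_i q_{i,\kk}(a_i^T x)$ of univariate polynomial ridge functions. Expanding an arbitrary $p \in P_s(\RR^d)$ as $p(x,y) = \sum_{\kk} y^{\kk} p_{\kk}(x)$ and taking $A_i$ to be the block matrix whose first row is $(a_i^T \mid 0)$ and whose remaining $\ell-1$ rows read off the $y$-coordinates, one absorbs the monomials $y^{\kk}$ into the $\ell$-variate ridge functions $h_i(t,y) = \sum_{\kk} y^{\kk} q_{i,\kk}(t)$. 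The exponent $d-\ell$ thus comes from running the \emph{univariate} theory in dimension $d-\ell+1$, not from a genericity argument about $\ell$-dimensional subspaces; this is the idea your proposal lacks, and without it (or a genuine proof of your expected-dimension claim) the proof is incomplete.
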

The proof of this statement can be found in \Cref{sec:up_bound} (\Cref{thm:pqstatement}).

We use these results to obtain sharp asymptotic bounds for the approximation error using generalized translation networks and complex-valued neural networks. 
We start by stating a lower and upper bound for the approximation error using generalized translation networks.
\begin{theorem}\label{thm:main_3}
Let $\ell \in \NN$. 
Then the following two statements hold:
\begin{enumerate}
\item \label{item:1}For any $d,r \in \NN$ with $d> \ell$ and any $p,q \in [1,\infty]$, there exists a 
constant $c = c(d,\ell,p,q,r)>0$ with the following  
        property: For any $n \in \NN$ there exists a function $f \in \sob$ such that for any choice of $\paul{\tau} \in L^{1}_{\mathrm{loc}}(\RR^\ell)$, 
        matrices $A_1 ,\dots, A_n  \in \RR^{\ell \times d}$ with $\rk(A_k) \in \{0, \ell\}$, 
        biases $b_1 ,\dots, b_n \in \RR^\ell$ and coefficients $c_1 ,\dots, c_n \in \RR$ we have 
        \[
        \mnorm{f(x) - \sum_{k=1}^n c_k\paul{\tau}(A_k x + b_k)}_{L^q(B^d)} \geq c \cdot n^{-r/(d- \ell)}.
        \]
        If $\paul{\tau} \in \mb(\RR^\ell)$, the assumption $\rk(A_k) \in \{0,\ell\}$ is not needed. 

\item\label{item:2} There exists a smooth activation function $\paul{\tau}: \RR^\ell \to \RR$ with the property that for any choice of 
	$d,r \in \NN$ with $d> \ell$ and any $1 \leq q \leq p \leq \infty$ there exists a constant $C = C(d,\ell,p,q,r)>0$ satisfying the following:
	For any $n \in \NN$ there exist
        matrices $A_1 ,\dots, A_n  \in \RR^{\ell \times d}$ such that for any function $f \in \sob$ there exist
        biases $b_1 ,\dots, b_n \in \RR^\ell$ and coefficients $c_1 ,\dots, c_n \in \RR$ with
        \[
        \mnorm{f(x) - \sum_{k=1}^n c_k\paul{\tau}(A_k x + b_k)}_{L^q(B^d)} \leq C \cdot n^{-r/(d- \ell)}.
        \]
 
\end{enumerate}
\end{theorem}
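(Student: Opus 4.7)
\emph{Proof sketch.}
\textbf{For Part \eqref{item:1}}, the plan is to view each shallow GTN as a sum of multivariate ridge functions and then invoke \Cref{thm:main_1}. Setting $h_k(y) \defeq c_k \, \sigma(y + b_k)$, every network takes the form $\sum_{k=1}^n h_k(A_k x)$, so it suffices to verify the integrability hypothesis $h_k(A_k \bullet) \in L^1(B^d)$ required by \Cref{thm:main_1}. For $\sigma \in L^1_{\mathrm{loc}}(\RR^\ell)$ with $\rk(A_k) = \ell$, this follows from the cited \Cref{prop:rank}; for $\rk(A_k) = 0$ the term is a constant on $B^d$ and hence trivially integrable; and for $\sigma \in \mb(\RR^\ell)$, $\sigma$ is bounded on the compact set $A_k(B^d) + b_k$ for \emph{any} $A_k$, so $h_k(A_k \bullet) \in L^\infty(B^d) \subseteq L^1(B^d)$ without any rank assumption.

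\textbf{For Part \eqref{item:2}}, the plan is to combine \Cref{thm:main_2} with a multivariate analogue of the Maiorov--Pinkus universal activation construction. More precisely, I would first build a smooth $\sigma : \RR^\ell \to \RR$ together with a constant $m = m(\ell) \in \NN$ possessing the following universality property: for every polynomial $p \in P(\RR^\ell)$, every compact set $K \subset \RR^\ell$ and every $\epsilon > 0$, there exist coefficients $c_1,\dots,c_m \in \RR$ and biases $b_1,\dots,b_m \in \RR^\ell$ with
\[
\sup_{y \in K} \absb{\, p(y) - \sum_{j=1}^m c_j \, \sigma(y + b_j) \,} < \epsilon.
\]
The construction mimics the one-dimensional case: enumerate the polynomials with rational coefficients $\{q_j\}_{j\in\NN}$, let $\sigma$ coincide with $q_j$ on a translate of a fixed cube in $\RR^\ell$, and interpolate smoothly in the gaps; a density plus scale-and-shift argument then promotes this to arbitrary polynomials on arbitrary compact sets.

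Given such a $\sigma$, I would set $N \defeq \lfloor n/m \rfloor$ and apply \Cref{thm:main_2} to obtain matrices $A_1,\dots,A_N \in \RR^{\ell \times d}$ and, for each $f \in \sob$, polynomials $p_1,\dots,p_N \in P(\RR^\ell)$ with $L^q$-approximation error at most $C \cdot N^{-r/(d-\ell)}$. Each $A_k(B^d) \subset \RR^\ell$ is compact, so the universality property lets me approximate each $p_k$ uniformly on $A_k(B^d)$ by $\sum_{j=1}^m c_{k,j} \sigma(\bullet + b_{k,j})$ with tolerance $\epsilon$ so small (e.g.\ $\epsilon \ll N^{-r/(d-\ell)}/N$) that the cumulative substitution error is negligible compared to the main term. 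Since $mN \leq n$, the resulting GTN realizes the desired rate $n^{-r/(d-\ell)}$.

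\textbf{The principal obstacle} is the construction of the universal $\sigma$ on $\RR^\ell$. The univariate Maiorov--Pinkus argument arranges countably many disjoint intervals along a line; in the multivariate setting one must distribute countably many disjoint compact patches in $\RR^\ell$ and construct a smooth extension that simultaneously encodes a different polynomial on each patch. This is essentially a bookkeeping extension of the one-dimensional argument, but it is the only point at which the activation dimension $\ell$ enters nontrivially.
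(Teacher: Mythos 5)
Your treatment of Part~\eqref{item:1} matches the paper's proof (\Cref{corr:gtn_low}) essentially verbatim: absorb $c_k$ and $b_k$ into $h_k$, check the $L^1(B^d)$ hypothesis of \Cref{thm:main_1} via \Cref{prop:rank} for full-rank $A_k$, trivially for $A_k=0$, and via local boundedness of $\sigma$ for the $\mb(\RR^\ell)$ case. Nothing to add there.

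For Part~\eqref{item:2} your overall strategy (enumeration-based ``piecewise'' activation plus \Cref{thm:main_2}) is the paper's, but the universality property you posit for $\sigma$ is stronger than what your construction can deliver, and this is where the argument as written breaks. If $\sigma$ encodes $q_j$ only on a translate $Q_0 + t_j$ of a \emph{fixed} cube $Q_0$, then for a compact set $K$ larger than $Q_0$ the shifted arguments $y+b_j$, $y \in K$, necessarily leave the patch where $\sigma$ agrees with $q_j$, and the ``smooth interpolation in the gaps'' is uncontrolled there; since in a GTN you may only translate the input to $\sigma$ (any rescaling lives in $A_k$, not in $\sigma$), no finite number of shifts of this fixed $\sigma$ approximates a given polynomial uniformly on an arbitrary compact $K$. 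The repair --- which is exactly what the paper's proof of \Cref{thm:gtn_upper} does --- is to normalize the matrices produced by \Cref{thm:pqstatement}: replace $A_k$ by $A_k/\mnorm{A_k}_{2\to 2}$ and absorb the scale into the polynomial $p_k$, so that $A_k x \in B^\ell$ for all $x \in B^d$ and the \emph{only} compact set ever needed is the fixed ball $B^\ell$. With that normalization your constant $m(\ell)$ collapses to $m=1$ (one shift per approximand, choosing $u_{m_k}$ within $n^{-1-r/(d-\ell)}$ of $p_k$ on $B^\ell$), so the network uses exactly $n$ neurons and the bookkeeping of distributing patches in $\RR^\ell$ reduces to placing them along a single line of translates $3m$, $m\in\NN$. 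So the gap is real but local and standard to fix; the rest of your outline is sound.
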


The proofs of these statements are easy consequences of \Cref{thm:main_1,thm:main_2}; see \Cref{subsec:nng}. 

We furthermore establish similar bounds for the approximation error using complex-valued neural networks (CVNNs). 
While the lower bound for CVNNs is an immediate consequence of \Cref{thm:main_1} and might be viewed as a 
special case of \Cref{thm:main_3}\eqref{item:1} (when taking $\ell = 2$),
 the upper bound is \emph{not} immediately obtained as a special case of \Cref{thm:main_3}\eqref{item:2};
see the discussion at the beginning of \Cref{sec:nn}. 
To express the result in a convenient way, we use the identification $\CC^d \cong \RR^{2d}$ \paul{so that we can consider $B^{2d} \subseteq \RR^{2d}$ as a subset of $\CC^d$.}
\begin{theorem}\label{thm:main_4}
The following two statements hold:
\begin{enumerate}
\item \label{item:1_cvnn}For any $d,r \in \NN$ with $d \geq 2$ and any $p,q \in [1,\infty]$, 
	there exists a constant $c = c(d,p,q,r)>0$ with the following  
        property: For any $n \in \NN$ there exists a function $f : \CC^d \to \CC$ that satisfies $\RE(f), \IM(f) \in \mathcal{B}(W_{2d}^{r,p})$
        such that for any choice of a  function $\phi \in L^{1}_{\mathrm{loc}}(\CC;\CC)$, 
        complex vectors $\alpha_1 ,\dots, \alpha_n  \in \CC^d$, biases $\beta_1 ,\dots, \beta_n \in \CC$ and coefficients $\gamma_1 ,\dots, \gamma_n \in \CC$ 
        we have 
        \[
        \mnorm{f(z) - \sum_{k=1}^n \gamma_k\phi(\alpha_k^T z + \beta_k)}_{L^q(B^{2d})} \geq c \cdot n^{-r/(2d- 2)}.
        \]

\item\label{item:2_cvnn} There exists a smooth\footnote{
Smoothness is understood in the sense of real variables here, i.e., identifying $\CC \cong \RR^2$.}
activation function $\phi: \CC \to \CC$ with the property that for any choice of 
	$d,r \in \NN$ with $d \geq 2$ and any $1 \leq q \leq p \leq \infty$ there exists a constant $C = C(d,p,q,r)>0$ satisfying the following:
	For any $n \in \NN$ there exist
        complex vectors $\alpha_1 ,\dots, \alpha_n  \in \CC^d$ such that for any function $f : \CC^d \to \CC$ with $\RE(f), \IM(f) \in \mathcal{B}(W_{2d}^{r,p})$
        there exist biases $\beta_1 ,\dots, \beta_n \in \CC$ and coefficients $\gamma_1 ,\dots, \gamma_n \in \CC$ with
        \[
        \mnorm{f(z) - \sum_{k=1}^n \gamma_k\phi(\alpha_k^T z + \beta_k)}_{L^q(B^{2d})} \leq C \cdot n^{-r/(2d- 2)}.
        \]
 
\end{enumerate}
\end{theorem}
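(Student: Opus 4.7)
\textbf{Part (\ref{item:1_cvnn}) -- lower bound.}
The plan is to reduce this to Theorem~\ref{thm:main_1} applied with $\ell = 2$ and $d$ replaced by $2d$. Identify $\CC^d \cong \RR^{2d}$ via $z = x+iy \mapsto z_\RR = (x,y)$, and let $M(\alpha) \in \RR^{2 \times 2d}$ denote the $\RR$-linear matrix representing complex multiplication by $\alpha \in \CC^d$; then $\rk M(\alpha) \in \{0,2\}$. Writing $\gamma_k = \gamma_k^R + i\gamma_k^I$ and $\phi = \phi^R + i\phi^I$, the real part of $\gamma_k\phi(\alpha_k^T z + \beta_k)$ equals $H_k(M(\alpha_k) z_\RR + b_k)$ with $H_k(u,v) = \gamma_k^R \phi^R(u+iv) - \gamma_k^I \phi^I(u+iv)$ and $b_k = (\RE\beta_k, \IM\beta_k)$, so the real part of the whole CVNN is a sum of $n$ real-valued ridge functions in $\RR^{2d}$ with $\ell=2$. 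Let $g \in \mathcal{B}(W^{r,p}_{2d})$ be the bad function produced by Theorem~\ref{thm:main_1}, and set $f(z) \defeq g(z_\RR)$, so $\RE f = g$, $\IM f = 0$, and both belong to $\mathcal{B}(W^{r,p}_{2d})$. The integrability hypothesis $H_k(M(\alpha_k)\bullet + b_k) \in L^1(B^{2d})$ follows from $\phi \in L^1_\mathrm{loc}(\CC;\CC)$ together with \Cref{prop:rank}. Hence $\|f - \text{CVNN}\|_{L^q(B^{2d})} \geq \|\RE f - \RE(\text{CVNN})\|_{L^q(B^{2d})} \geq c\, n^{-r/(2d-2)}$.

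\textbf{Part (\ref{item:2_cvnn}) -- upper bound.}
This cannot be obtained by specializing Theorem~\ref{thm:main_3}\eqref{item:2} because the matrices $M(\alpha)$ with $\alpha \in \CC^d$ form a proper subset of $\RR^{2 \times 2d}$. My plan has three steps. First, I would establish a ``complex-ridge'' analog of Theorem~\ref{thm:main_2}: for every $n$ there exist $\alpha_1,\dots,\alpha_n \in \CC^d$ such that every $h \in \mathcal{B}(W^{r,p}_{2d})$ admits bivariate polynomials $P_1,\dots,P_n \in P(\RR^2)$ with
\[
\Bigl\| h - \sum_{k=1}^n P_k\bigl(M(\alpha_k)\,\bullet\bigr) \Bigr\|_{L^q(B^{2d})} \leq C\, n^{-r/(2d-2)}.
\]
The rate is dimensionally consistent because the set of complex lines in $\CC^d$ is $\CC P^{d-1}$, of real dimension $2d-2$, which exactly matches the exponent. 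I expect this to follow by adapting the proof of Theorem~\ref{thm:main_2} so that the covering and quasi-projection arguments run over $\CC P^{d-1}$ instead of the full real Grassmannian of $2$-planes in $\RR^{2d}$.

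Second, following \cite{maiorov1999lower,geuchen2024optimal}, construct a smooth $\phi:\CC \to \CC$ having the ``super-universal'' property that for every polynomial $P \in P(\RR^2)$, every compact $K \subset \CC$, and every $\varepsilon>0$, there exist a \emph{fixed} number $M$ of triples $(\gamma_j, \eta_j, \delta_j) \in \CC^3$ with
\[
\Bigl| P(\RE w, \IM w) - \sum_{j=1}^M \gamma_j \phi(\eta_j w + \delta_j) \Bigr| < \varepsilon \quad \text{for all } w \in K,
\]
where $M$ is independent of $\deg P$ and $\varepsilon$. This is a direct complex-valued adaptation of Maiorov's enumeration trick, packaging a dense sequence of polynomials into the values of $\phi$. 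Third, applying the first step to $\RE f$ and $\IM f$ separately and then the second step to each of the $\leq 2n$ resulting complex-ridge terms $P_k(M(\alpha_k)\bullet)$, one uses the identity $\eta_j w + \delta_j = (\eta_j \alpha_k)^T z + \delta_j$ (with $w = \alpha_k^T z$) to realize the $M$ approximating summands as genuine CVNN neurons. Absorbing the constants $M$ and $2$ into $n$ yields a CVNN with $\lesssim n$ neurons and error $\lesssim n^{-r/(2d-2)}$.

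\textbf{Main obstacle.}
The technical heart is the first step of Part~(\ref{item:2_cvnn}): running the upper-bound proof of Theorem~\ref{thm:main_2} when the projection matrices are restricted to the $\CC$-linear submanifold $\{M(\alpha)\}$. Although the dimension count $\dim_\RR \CC P^{d-1} = 2d-2$ matches the target rate, every ingredient of that proof -- in particular the construction of the quasi-projection from \Cref{sec:ortho} and the Jackson-type estimates -- has to be verified for projection directions sampled from $\CC P^{d-1}$ rather than from the full real Grassmannian.
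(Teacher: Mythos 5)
Your Part~(\ref{item:1_cvnn}) is correct and essentially the paper's argument: reduce to \Cref{thm:main_1} with $2d$ in place of $d$ and $\ell=2$ by taking real parts, noting that $M(\alpha_k)$ has rank $0$ or $2$ and invoking \Cref{prop:rank} for the $L^1$ hypothesis. Your packaging of $\RE(\gamma_k)\RE\phi - \IM(\gamma_k)\IM\phi$ into a single bivariate function $H_k$ even gives $n$ ridge summands where the paper counts $2n$; this only changes the constant.

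Part~(\ref{item:2_cvnn}) has the right architecture -- (i) a complex-ridge polynomial decomposition with $n\gtrsim s^{2d-2}$ fixed directions $\alpha_j\in\CC^d$, combined with Jackson's bound, and (ii) an enumerating activation function realizing each bivariate polynomial ridge term as a single neuron (the paper's \Cref{lem:acti} does this with $M=1$). But the step you yourself flag as the ``technical heart'' is a genuine gap, and your proposed route to it would not work. The proof of \Cref{thm:main_2} contains no ``covering and quasi-projection arguments'' over a Grassmannian; the quasi-projection appears only in the \emph{lower} bound. The upper bound rests on \Cref{prop:up_1}, i.e., on Pinkus's algebraic fact that $P_s^h(\RR^{d-\ell+1})=\spann\{x\mapsto (a_i^Tx)^s\}$ for $n\geq\dim P_s^h$ suitably chosen directions, together with the downward-closure in the degree. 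What Part~(\ref{item:2_cvnn}) actually requires is the complex analogue: that for $n\geq\dim_\CC P^h_{s,s}(\CC^d)\lesssim s^{2d-2}$ there exist $\alpha_1,\dots,\alpha_n\in\CC^d$ with $P^h_{s',t'}(\CC^d)=\spann_\CC\{z\mapsto(\alpha_j^Tz)^{s'}(\overline{\alpha_j^Tz})^{t'}\}$ for all $s',t'\leq s$, whence every $p\in P_s(\CC^d)$ is a sum of $n$ ridge polynomials in $w,\overline w$. The dimension count $\dim_\RR\CC P^{d-1}=2d-2$ motivates the exponent but proves nothing; the paper establishes the decomposition by building a Wirtinger-calculus duality between $P^h_{s,t}(\CC^d)$ and the differential operators $q(D)=\sum a_{\kk,\elll}\wirt^{\kk}\wirtq^{\elll}$ (\Cref{prop:firstcharac,prop:spancharac,prop:imp,thm:ridgecomplexpol}), which is the new technical content of this section and is absent from your proposal. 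Until that decomposition is proved, the claimed rate $n^{-r/(2d-2)}$ for CVNNs is not established.
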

We refer to \Cref{subsec:cvnn} for the proof of this theorem \paull{(see in particular \Cref{corr:lowbound,thm:cvnnup})}.

\subsection{Organization of the paper}
In \Cref{sec:prelim}, we introduce the central objects of the present work and discuss preliminary statements which are important ingredients for our proofs. 
\Cref{sec:low_bound} is devoted to proving the lower bound stated in \Cref{thm:main_1}.
The complementary upper bound from \Cref{thm:main_2} is proven in \Cref{sec:up_bound}.
In \Cref{sec:nn}, these bounds are used to obtain the results for generalized translation networks and complex-valued neural networks stated in \Cref{thm:main_3,thm:main_4}.
Lastly, the appendices contain postponed proofs and a discussion of the gap in the proof of the lower bound in \cite{maiorov2010best}. 

\section{Preliminaries}
\label{sec:prelim}

\subsection{Multivariate ridge functions}
In this work, we study the approximation using sums of $n$ arbitrary multivariate (more precisely, $\ell$-variate) ridge functions. 
Here, for a given $d \in \NN$ and a natural number $\ell \in \{1,\dots,d-1\}$, an $\ell$-variate ridge function is a function $\profilefunction^\ast: \RR^d \to \RR$ of the form $\profilefunction^\ast(x) \defeq \profilefunction(Ax)$,
where $\profilefunction: \RR^\ell \to \RR$ and $A \in \RR^{\ell \times d}$.
Note that we get a classical ridge function in the case $\ell = 1$.
In order to derive approximation bounds, one needs to impose \paul{certain} (mild) assumptions on the functions $\profilefunction$ or $\profilefunction^\ast$ which we discuss here.

Recall that $B^d$ denotes the closed unit ball in $\RR^d$.
We let 
\begin{equation*}
\rdl \defeq \left\{ \profilefunction^\ast: \ B^d \to \RR, \ x \mapsto \profilefunction(Ax): \ A \in \RR^{\ell \times d} ,  \  \profilefunction: \ \RR^\ell \to \RR \text{ measurable with }  \profilefunction^\ast \in L^1(B^d) \right\}
\end{equation*}
denote the set of all $\ell$-variate ridge functions $B^d \to \RR$ that belong to $L^1(B^d)$.
Moreover, we define
\begin{equation*}
\rrdl \defeq \left\{ \profilefunction^\ast: \ B^d \to \RR, \ x \mapsto \profilefunction(Ax): \ A \in \RR^{\ell \times d} ,  \  \profilefunction \in \mb(\RR^\ell) \right\},
\end{equation*}
where we recall that $\mb(\RR^\ell)$ denotes the set of all locally bounded measurable functions. 
Here, it is important to note the subtle difference that in the definition of $\rdl$ we require the composed map $\profilefunction^\ast$ to be in $L^1(B^d)$,
whereas in the definition of $\rrdl$ we require the low-dimensional map $\profilefunction$ to be in $\mb(\RR^\ell)$.

The following proposition provides a sufficient condition for a function to belong to $\rdl$.
\begin{proposition}\label{prop:rank}
Let $d\in \NN$ and $\ell \in \{1,\dots,d-1\}$.
Let $\profilefunction \in L^1_{\mathrm{loc}}(\RR^\ell)$ and $A \in \RR^{\ell \times d}$ with $\rk(A) = \ell$. 
Then we have $\profilefunction^\ast \in \rdl$ where $\profilefunction^\ast(x) \defeq \profilefunction(Ax)$ for $x \in B^d$.
\end{proposition}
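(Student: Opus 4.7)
The plan is to show that $r^\ast \in L^1(B^d)$ by performing an orthogonal change of variables which reduces $A$ to a block-diagonal form, then applying Fubini and a final linear substitution to reduce everything to a local integral of $r$ on $\RR^\ell$. Membership in $R_{d,\ell}$ will then follow from the definition once measurability is observed.

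First I would diagonalize the geometry of $A$. Since $\rk(A) = \ell$, the orthogonal decomposition $\RR^d = \ker(A)^\perp \oplus \ker(A)$ has $\dim \ker(A)^\perp = \ell$ and $\dim \ker(A) = d - \ell$. Choosing an orthonormal basis of $\RR^d$ whose first $\ell$ vectors span $\ker(A)^\perp$ and whose last $d-\ell$ vectors span $\ker(A)$ gives an orthogonal matrix $Q \in O(d)$ such that $AQ = [\,C \;\; \mathbf{0}\,]$, where $C \in \RR^{\ell \times \ell}$ is invertible (since $A$ restricted to $\ker(A)^\perp$ is a linear isomorphism onto $\RR^\ell$). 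The substitution $x = Qy$ preserves both $B^d$ and Lebesgue measure, so
\begin{equation*}
\int_{B^d} |r(Ax)| \, \dd x = \int_{B^d} |r(Cy')| \, \dd y,
\end{equation*}
where we write $y = (y', y'') \in \RR^\ell \times \RR^{d-\ell}$.

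Next I would apply Fubini-Tonelli together with a crude slicing estimate. For $y' \in B^\ell$ the slice $\{y'' \in \RR^{d-\ell} : (y',y'') \in B^d\}$ is contained in the $(d-\ell)$-dimensional unit ball, and for $y' \notin B^\ell$ the slice is empty. Hence
\begin{equation*}
\int_{B^d} |r(Cy')| \, \dd y \leq \mathrm{vol}(B^{d-\ell}) \int_{B^\ell} |r(Cy')| \, \dd y'.
\end{equation*}
The substitution $u = Cy'$, with Jacobian $|\det C|$, then turns the right-hand side into $|\det C|^{-1}\,\mathrm{vol}(B^{d-\ell}) \int_{C(B^\ell)} |r(u)| \, \dd u$. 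Since $C(B^\ell)$ is compact and $r \in L^1_{\mathrm{loc}}(\RR^\ell)$, this last integral is finite, which gives $r^\ast \in L^1(B^d)$.

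The only point that requires a brief comment is measurability of $r^\ast$: elements of $L^1_{\mathrm{loc}}(\RR^\ell)$ admit a Borel representative, and precomposition with the continuous map $x \mapsto Ax$ then yields a Borel (hence Lebesgue) measurable function $r^\ast$ on $B^d$. Combined with the $L^1$-bound above, this gives $r^\ast \in R_{d,\ell}$ directly from the definition. I do not foresee a genuine obstacle; the argument is essentially a co-area/Fubini computation, and the rank hypothesis is used in exactly one place, namely to ensure that $C$ is invertible so that the final linear substitution is admissible.
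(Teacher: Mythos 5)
Your proof is correct and follows essentially the same route as the paper's: an orthogonal change of variables reducing $A$ to the block form $[\,C \;\; 0\,]$ (the paper uses the SVD, you use a kernel-adapted orthonormal basis, which is the same reduction), followed by Tonelli and a linear substitution whose admissibility is exactly where the rank hypothesis enters. The only cosmetic difference is that you bound the slices by $\mathrm{vol}(B^{d-\ell})$ directly while the paper first enlarges $B^d$ to $[-1,1]^d$; both yield the same finiteness conclusion.
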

\begin{proof}
According to the definition of the set $\rdl$, it suffices to show $\profilefunction^\ast \in L^1(B^d)$.
Let $A = U\Sigma V^T$ be the singular value decomposition of $A$ with orthogonal matrices $U \in \RR^{\ell \times \ell}$, $V \in \RR^{d \times d}$ and a ``diagonal'' matrix
$\Sigma \in \RR^{\ell \times d}$ storing the singular values $\sigma_1>  \dots> \sigma_\ell > 0$ of $A$ (note that $\rk(A) =\ell$). 
Using the rotation invariance of the Lebesgue measure, we obtain 
\[
\int_{B^d} \abs{\profilefunction^\ast(x)}\ \dd x = \int_{B^d} \abs{\profilefunction(U\Sigma V^Tx)} \ \dd x \overset{y = V^Tx}{=} \int_{B^d} \abs{\profilefunction(U\Sigma y)} \ \dd y.
\]
We decompose a vector $y \in \RR^d$ as $y = (y',y'')$ with $y' \in \RR^\ell$ and $y'' \in \RR^{d-\ell}$.
By letting $\tilde{\Sigma} \in \RR^{\ell \times \ell}$ be the matrix that arises from $\Sigma$ by deleting the last $d-\ell$ columns (i.e., $\tilde{\Sigma}$ is the diagonal matrix 
with the singular values $\sigma_1, \dots, \sigma_\ell$ on the diagonal), we obtain 
\[
\profilefunction(U\Sigma y) = \tilde{\profilefunction}(y') \quad \text{for every } \paull{y =(y',y'') \in \RR^\ell \times \RR^{d - \ell} \cong \RR^d},
\]
with $\tilde{\profilefunction}(y') \defeq \profilefunction(U\tilde{\Sigma}y')$.
Thus, we get 
\[
\int_{B^d} \abs{\profilefunction(U\Sigma y)} \ \dd y \leq \int_{[-1,1]^d} \abs{\profilefunction(U\Sigma y)} \ \dd y = \int_{[-1,1]^{d-\ell}}\int_{[-1,1]^\ell} \abs{\tilde{\profilefunction}(y')} \ \dd y' \ \dd y'',
\]
where we used Tonelli's theorem. 
We compute
\begin{align*}
\int_{[-1,1]^\ell} \abs{\tilde{\profilefunction}(y')} \ \dd y' &= \int_{[-1,1]^\ell} \abs{\profilefunction(U\tilde{\Sigma}y')} \ \dd y' 
\overset{z = U\tilde{\Sigma}y'}{=} \frac{1}{\det(\tilde{\Sigma})} \cdot \int_{U\tilde{\Sigma} [-1,1]^\ell} \abs{\profilefunction(z)} \ \dd z =: \alpha < \infty .
\end{align*}
Here, we used the behavior of the Lebesgue measure under a linear change of variables, the fact that $U \tilde{\Sigma}  [-1,1]^\ell \paul{\subseteq \RR^\ell}$ is compact and that $\profilefunction \in L^1_{\mathrm{loc}}(\RR^\ell)$.
Overall, we get 
\[
\int_{B^d} \abs{\profilefunction^\ast(x)} \ \dd x \leq \alpha \cdot \int_{[-1,1]^{d-\ell}}\ \dd y'' = \alpha \cdot 2^{d-\ell} < \infty,
\]
which yields the claim. 
\end{proof}
\begin{remark}
We remark that the assumption that $A$ is full-rank in \Cref{prop:rank} \paull{cannot simply be omitted}. 
To see this, take $d \in \NN$ with $d \geq 3$ and any measurable function $h: \RR \to \RR$ that satisfies 
\[
\int_{\left[-\frac{1}{\sqrt{d}}, \frac{1}{\sqrt{d}}\right]} \abs{h(x)} \ \dd x = \infty.
\]
For arbitrary $\ell \in \NN$ with $d >\ell \geq 2$, we let 
\[
\profilefunction: \quad \RR^\ell \to \RR, \quad \profilefunction(x_1, \dots, x_\ell) = \begin{cases}h(x_1)&\text{if } x_2 = \dots = x_\ell = 0, \\ 0&\text{else.}\end{cases}
\]
Since $\profilefunction \equiv 0$ almost everywhere, we infer $\profilefunction \in L^1_{\mathrm{loc}}(\RR^\ell)$. 
However, we can construct the matrix 
\[
A \defeq \left(\begin{matrix} 1 & 0 & \cdots& 0 \\ 0 & 0 & \cdots & 0 \\ \vdots & \vdots&\ddots&\vdots \\ 0 & 0 & \cdots & 0 \end{matrix}\right)\in \RR^{\ell \times d}.
\]
For $x \in B^d$, we then get $\profilefunction(Ax) = h(x_1)$.
This yields 
\begin{align*}
\int_{B^d} \abs{\profilefunction(Ax)} \ \dd x &\geq \int_{\left[ - \frac{1}{\sqrt{d}}, \frac{1}{\sqrt{d}}\right]^d}  \abs{\profilefunction(Ax)} \ \dd x 
= \int_{\left[ - \frac{1}{\sqrt{d}}, \frac{1}{\sqrt{d}}\right]^d}  \abs{h(x_1)} \ \dd x \\
&=  \left(\frac{2}{\sqrt{d}}\right)^{d-1} \cdot \int_{\left[ - \frac{1}{\sqrt{d}}, \frac{1}{\sqrt{d}}\right]} \abs{h(x_1)} \ \dd x_1 = \infty.
\end{align*}
Thus, $\profilefunction(A \cdot \bullet) \notin L^1(B^d)$.
\end{remark}
As we aim to study the approximation properties of sums of multivariate ridge functions, we define
\begin{equation}\label{eq:rrndl}
\rrndl \defeq \left\{ \sum_{j=1}^n \profilefunction_j : \ \profilefunction_1,\dots, \profilefunction_n \in \rdl\right\}
\end{equation}
and
\begin{equation*}
\rndl \defeq \left\{ \sum_{j=1}^n \profilefunction_j : \ \profilefunction_1,\dots, \profilefunction_n \in \rrdl\right\}.
\end{equation*}

Next, we note that, given a fixed measurable function $f: B^d \to \RR$, the approximation error when approximating $f$
using elements of $\rrndl$ in the $\mnorm{\cdot}_{L^1(B^d)}$-norm remains the same when replacing $\rrndl$ by $\rndl$.
This will allow us to focus on functions in $\rndl$ for deriving our lower bounds. 
\begin{proposition}\label{prop:infequal}
Let $n,d\in \NN$ \paull{and} $\ell \in \{1,\dots,d-1\}$.
Moreover, let $f  : B^d \to \RR$ be a measurable function. 
Then we have 
\[
\underset{R \in \rrndl}{\inf} \mnorm{f - R}_{L^1(B^d)} = \underset{R \in \rndl}{\inf} \mnorm{f - R}_{L^1(B^d)}.
\]
\end{proposition}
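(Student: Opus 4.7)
The plan is to prove the two inequalities separately, the ``$\geq$'' direction by a direct set inclusion and the ``$\leq$'' direction by a truncation/approximation argument.

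First I would observe that $\rndl \subseteq \rrndl$. Indeed, any $r \in \mb(\RR^\ell)$ is in particular bounded on the compact set $A \cdot B^d$ for any $A \in \RR^{\ell \times d}$, so $x \mapsto r(Ax)$ is bounded on $B^d$ and hence lies in $L^1(B^d)$. Therefore $R_{d,\ell}^\ast \subseteq R_{d,\ell}$, which immediately yields
\[
\inf_{R \in \rrndl} \mnorm{f - R}_{L^1(B^d)} \leq \inf_{R \in \rndl} \mnorm{f - R}_{L^1(B^d)}.
\]

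For the reverse inequality, I would fix $\eps > 0$ and an arbitrary $R = \sum_{j=1}^n r_j \in \rrndl$, where each $r_j(x) = h_j(A_j x)$ with $h_j$ measurable and $r_j \in L^1(B^d)$. For each $j$ and each $N \in \NN$, define the truncated function
\[
h_j^{(N)}(y) \defeq h_j(y) \cdot \mathbbm{1}_{\abs{h_j(y)} \leq N}(y),
\]
which is measurable and bounded by $N$, hence $h_j^{(N)} \in \mb(\RR^\ell)$. Consequently, $x \mapsto h_j^{(N)}(A_j x)$ belongs to $R_{d,\ell}^\ast$, and the sum $R^{(N)} \defeq \sum_{j=1}^n h_j^{(N)}(A_j \bullet)$ lies in $\rndl$.

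The key step is to show that $R^{(N)} \to R$ in $L^1(B^d)$ as $N \to \infty$. For each $j$, the pointwise bound $|h_j^{(N)}(A_j x) - h_j(A_j x)| \leq |h_j(A_j x)|$ holds everywhere on $B^d$, with pointwise convergence to $0$, and the dominating function $|h_j(A_j \bullet)| = |r_j|$ lies in $L^1(B^d)$ by assumption. The dominated convergence theorem therefore gives $\|h_j^{(N)}(A_j \bullet) - h_j(A_j \bullet)\|_{L^1(B^d)} \to 0$, and summing over $j$ yields $\|R^{(N)} - R\|_{L^1(B^d)} \to 0$. Choosing $N$ large enough so that $\|R^{(N)} - R\|_{L^1(B^d)} < \eps$, the triangle inequality gives
\[
\inf_{R' \in \rndl} \mnorm{f - R'}_{L^1(B^d)} \leq \mnorm{f - R^{(N)}}_{L^1(B^d)} \leq \mnorm{f - R}_{L^1(B^d)} + \eps.
\]
Taking the infimum over $R \in \rrndl$ on the right and then letting $\eps \downarrow 0$ completes the argument.

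There is no real obstacle here; the main thing to verify carefully is that truncation produces an element of $R_{d,\ell}^\ast$ (which requires only that $h_j^{(N)}$ be bounded measurable, not that $A_j$ be full rank) and that the dominating function $|r_j|$ for the convergence step is indeed integrable on $B^d$, which is exactly the defining condition of $R_{d,\ell}$.
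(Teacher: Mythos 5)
Your proposal is correct and follows essentially the same route as the paper's proof: the inclusion $\rndl \subseteq \rrndl$ gives one inequality, and truncating each $h_j$ at level $N$ together with dominated convergence (dominated by $\abs{r_j} \in L^1(B^d)$) gives the other. The only cosmetic difference is that you justify the inclusion explicitly via boundedness of $r$ on the compact set $A\cdot B^d$, whereas the paper simply asserts it.
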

\begin{proof}
In order to show 
\[
\underset{R \in \rrndlq}{\inf} \mnorm{f - R}_{L^1(B^d)} \geq \underset{R \in \rndl}{\inf} \mnorm{f - R}_{L^1(B^d)},
\]
we may without loss of generality assume that
\[
\underset{R \in \rrndlq}{\inf} \mnorm{f - R}_{L^1(B^d)} < \infty.
\]
In this case, for a given $\eps > 0$, there exists a function $\overline{R} \in \rrndlq$ satisfying 
\[
\mnorm{f - \overline{R}}_{L^1(B^d)} \leq \underset{R \in \rrndlq}{\inf} \mnorm{f - R}_{L^1(B^d)} + \eps /2.
\]
\paull{By definition of $\rrndl$, we can choose} $\profilefunction_1, \dots, \profilefunction_n : \RR^\ell \to \RR$ and $A_1, \dots, A_n \in \RR^{\ell \times d}$ such that 
\[
\overline{R}(x) = \sum_{j=1}^n \profilefunction_j(A_j x) \quad \text{for } x \in \RR^d,
\]
where $\left(B^d \ni x \mapsto \profilefunction_j(A_j x)\right) \in L^1(B^d)$ for $j \in \{1,\dots,n\}$.
For $N \in \NN$ and $j \in \{1,\dots,n\}$, we define
\[
\profilefunction_j^N(x) \defeq \begin{cases} \profilefunction_j(x),& \text{if } \abs{\profilefunction_j(x)} \leq N, \\
0,& \text{otherwise.}\end{cases}
\]
Then we have $\profilefunction_j^N \in \mb(\RR^\ell)$ (in fact, $\profilefunction_j^N$ is even globally bounded) for every $j \in \{1,\dots,n\}$ and $N \in \NN$.
We define 
\[
R^N: \quad B^d \to \RR, \quad R^N(x) \defeq \sum_{j=1}^n \profilefunction_j^N(A_j x)
\]
and note $R^N \in \rndl$.

Clearly, we have $R^N \to \overline{R}$ pointwise on $B^d$ \paul{as $N \to \infty$}.
Moreover, we observe
\[
\abs{R^N(x)} \leq \sum_{j=1}^n \abs{\profilefunction_j^N(A_jx)} \leq \sum_{j=1}^n \abs{\profilefunction_j(A_j x)}
\]
for every $x \in B^d$, where the right-hand side belongs to $L^1(B^d)$.
Therefore, by dominated convergence we get 
\begin{equation*}
\mnorm{R^N- \overline{R}}_{L^1(B^d)} \to 0 \quad (N \to \infty).
\end{equation*}
Hence, we can pick $N \in \NN$ large enough such that 
\[
\mnorm{R^N - \overline{R}}_{L^1(B^d)} \leq \eps / 2.
\]
We thus get 
\begin{align*}
\mnorm{f - R^N}_{L^1(B^d)} \leq \mnorm{f - \overline{R}}_{L^1(B^d)} + \mnorm{R^N - \overline{R}}_{L^1(B^d)} \leq \underset{R \in \rrndlq}{\inf} \mnorm{f - R}_{L^1(B^d)} + \eps.
\end{align*}
Since $\eps > 0$ was arbitrary, we get 
\begin{equation}\label{eq:second_ineq}
\underset{R \in \rrndlq}{\inf} \mnorm{f - R}_{L^1(B^d)} \geq \underset{R \in \rndl}{\inf} \mnorm{f - R}_{L^1(B^d)}.
\end{equation}

Lastly, note that 
\[
\rndl \subseteq \rrndl,
\]
which implies 
\begin{equation}\label{eq:third_ineq}
\underset{R \in \rndl}{\inf} \mnorm{f - R}_{L^1(B^d)} \geq \underset{R \in \rrndl}{\inf} \mnorm{f - R}_{L^1(B^d)}.
\end{equation}
\Cref{eq:second_ineq,eq:third_ineq} together yield the claim.
\end{proof}

In the following proposition, we note that the matrices in the definition of the set $\rrdl$ (and therefore also in the definition of the set $\rndl$)
can be replaced by matrices with \emph{orthonormal rows}.
Here, we say that a matrix $A \in \RR^{\ell \times d}$ has orthonormal rows if and only if $AA^T = I_{\ell \times \ell}$,
which is equivalent to the rows of $A$ being an orthonormal system in $\RR^d$.
Completing the columns of $A^T$ to an orthonormal basis of $\RR^d$, we obtain in that case
 the existence of an orthogonal matrix $\sigma \in \RR^{d \times d}$
with 
\[
A \sigma = I_{\ell \times d},
\]  
where 
\[
I_{\ell \times d} \defeq \left(\begin{array}{c | c}
I_{\ell \times \ell} & 0
\end{array}\right)
\in \RR^{\ell \times d}.
\]
\begin{proposition} \label{prop:orthrows}
Let $d, \ell \in \NN$ with $\ell < d$. 
Then we have
\[
\rrdl = \left\{ B^d \to \RR, \ x \mapsto \profilefunction(Ax): \ \profilefunction \in \mb(\RR^\ell), \ 
A \in \RR^{\ell \times d} \text{ with } AA^T = I_{\ell \times \ell}\right\}.
\]
\end{proposition}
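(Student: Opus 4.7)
The $\supseteq$ inclusion is immediate: a matrix $A \in \RR^{\ell \times d}$ with $A A^T = I_{\ell \times \ell}$ is a special case of an arbitrary matrix in $\RR^{\ell \times d}$, so every function on the right-hand side already lies in $R_{d,\ell}^\ast$ by definition. The substantive content is the $\subseteq$ inclusion, for which I would start from $r^\ast(x) = r(Ax)$ with $r \in \mb(\RR^\ell)$ and an arbitrary $A \in \RR^{\ell \times d}$, and aim to rewrite this composition as $\tilde r(\tilde A x)$ with $\tilde r \in \mb(\RR^\ell)$ and $\tilde A \tilde A^T = I_{\ell \times \ell}$. The plan is to absorb all of the non-orthonormal geometry of $A$ into a linear transformation acting on the argument of the low-dimensional function, leaving a projection-like matrix with orthonormal rows on the outside.

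Concretely, I would invoke the singular value decomposition $A = U \Sigma V^T$ with $U \in \RR^{\ell \times \ell}$ and $V \in \RR^{d \times d}$ orthogonal and $\Sigma \in \RR^{\ell \times d}$ carrying the singular values, just as in the proof of \Cref{prop:rank}. Writing $\Sigma = \tilde\Sigma \, I_{\ell \times d}$ with $\tilde\Sigma \in \RR^{\ell \times \ell}$ the principal $\ell \times \ell$ block of $\Sigma$, I would set $\tilde A \defeq I_{\ell \times d} V^T$ and $\tilde r(y) \defeq r(U \tilde\Sigma y)$. A direct calculation using $V^T V = I_{d \times d}$ together with $I_{\ell \times d} (I_{\ell \times d})^T = I_{\ell \times \ell}$ gives $\tilde A \tilde A^T = I_{\ell \times \ell}$, while
\[
\tilde r(\tilde A x) = r(U \tilde\Sigma I_{\ell \times d} V^T x) = r(U \Sigma V^T x) = r(Ax)
\]
for every $x \in B^d$, which is the required representation.

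The remaining step is to verify $\tilde r \in \mb(\RR^\ell)$. The linear map $L \colon y \mapsto U \tilde\Sigma y$ is continuous and hence measurable, so $\tilde r = r \circ L$ is measurable as a composition. Moreover, $L$ sends each compact set $K \subseteq \RR^\ell$ to a compact set $L(K) \subseteq \RR^\ell$, on which $r$ is bounded by hypothesis; hence $\tilde r$ is bounded on $K$. The one subtlety to keep in mind is that $\tilde\Sigma$ can be singular when $A$ is rank-deficient, so $L$ need not be a homeomorphism; however, continuity of $L$ alone suffices for both the measurability and the local boundedness arguments, so no additional work is required in that case.
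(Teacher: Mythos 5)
Your proposal is correct and follows essentially the same route as the paper: both absorb $U$ and the singular values into the low-dimensional function via the SVD and leave the (truncated) orthogonal factor $V^T$ as the new matrix with orthonormal rows; using the full SVD with $\tilde A = I_{\ell\times d}V^T$ is just a cosmetic variant of the paper's compact SVD. You are in fact slightly more careful than the paper in spelling out why $\tilde r \in \mb(\RR^\ell)$ and in noting that a singular $\tilde\Sigma$ causes no trouble.
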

\begin{proof}
\paul{The inclusion ``$\supseteq$'' is trivial. 
To prove ``$\subseteq$'',} let $\profilefunction \in \mb(\RR^\ell)$ and $A \in \RR^{\ell \times d}$ be arbitrary. 
Let $A = U\Sigma V^T$ be the compact singular value decomposition of $A$ with (semi-)orthogonal matrices 
$U \in \RR^{\ell\times \ell}$, $V \in \RR^{d\times\ell}$ (i.e., $V^T V = UU^T= I_{\ell \times \ell}$) 
and a diagonal matrix $\Sigma=\diag(\sigma_1,\ldots,\sigma_\ell)\in\RR^{\ell \times \ell}$ 
with $\sigma_1 \geq\cdots\geq \sigma_\ell \geq 0$.
We define 
\begin{equation*}
\tilde{\profilefunction} : \quad \RR^\ell \to \RR, \quad \tilde{\profilefunction}(x) = \profilefunction(U\Sigma x).
\end{equation*}
Clearly, it holds that $\tilde{\profilefunction} \in \mb(\RR^\ell)$.
Moreover, for arbitrary $x \in \RR^d$ we get
\begin{equation*}
\profilefunction(Ax) = \profilefunction(U\Sigma V^Tx) = \tilde{\profilefunction}(V^T x)
\end{equation*}
where $V^T \in \RR^{\ell \times d}$ is a matrix with orthonormal rows.  
This proves the claim. 
\end{proof}

\subsection{Sobolev functions on \texorpdfstring{$B^d$}{the unit ball}}\label{sec:sobolev}

In this work, we study the approximation of Sobolev functions by sums of multivariate ridge functions. 
Therefore, we \paul{present our notation regarding Sobolev functions} in this paragraph. 

We identify $W^{r,p}(B^d)$ with $W^{r,p}\big(\left(B^d\right)^\circ\big)$, where $\left(B^d\right)^\circ$ is the open unit ball in $\RR^d$.
Thus, for arbitrary $ p \in [1,\infty]$, we call a function $f \in L^p(B^d)$ an $L^p$\emph{-Sobolev function} of regularity $r \in \NN$ if 
for every multiindex $ \kk \in \NN_0^d$ with $\abs{\kk}\leq r$ the derivative $\partial^{\kk} f$ exists in the weak sense on $\left(B^d\right)^\circ$ and is itself contained in $L^p(B^d)$.
For such a function $f : B^d \to \RR$, we define
\begin{equation}\label{eq:sob_def}
\mnorm{f}_{W^{r,p}_d} \defeq \begin{cases} 
\left(\sum_{\kk \in \NN_0^d, \abs{\kk} \leq r} \mnorm{\partial^{\kk}f}^p_{L^p(B^d)}\right)^{1/p},& p < \infty, \\
\underset{\kk \in \NN_0^d,\abs{\kk} \leq r}{\max}\ \mnorm{\partial^{\kk}f}_{L^\infty(B^d)},& p = \infty.\end{cases}
\end{equation}
We then write $\sob$ for the unit ball in the $L^p$-Sobolev space of regularity $r$, i.e., for the set of functions $f \in W^{r,p}(B^d)$ for which
$\mnorm{f}_{W^{r,p}_d} \leq 1$.

The following result (a Jackson-type bound for Sobolev functions) is folklore and is essential for deriving the approximation bounds in this paper. 
\begin{proposition}[{cf. \cite[Equation~(2.10)]{mhaskar1996neural}}]\label{prop:jack}
Let $d,r \in \NN$ and $1 \leq q \leq p \leq \infty$.
Then there exists a constant $C = C(d,p,q, r)>0$ with the following property:
For any $s \in \NN$ and any $f \in \sob$ there exists a polynomial $P \in \polyy_s(B^d)$ such that
\[
\mnorm{f-P}_{L^q(B^d)} \leq C \cdot s^{-r}.
\]
\end{proposition}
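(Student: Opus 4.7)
The plan is to combine a Sobolev extension theorem with a classical Jackson-type polynomial approximation bound on cubes, following the standard recipe for such estimates on Lipschitz domains.

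\emph{Step 1: Reduce to $p = q$.} Since $B^d$ has finite Lebesgue measure and $1 \leq q \leq p \leq \infty$, a direct application of Hölder's inequality yields
\[
\mnorm{g}_{L^q(B^d)} \leq \bigl|B^d\bigr|^{1/q - 1/p} \cdot \mnorm{g}_{L^p(B^d)}
\]
for every $g \in L^p(B^d)$. Consequently it suffices to prove the inequality in the case $q = p$, absorbing the measure factor into the final constant.

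\emph{Step 2: Extend to a cube.} Since $B^d$ is a bounded domain with smooth (hence Lipschitz) boundary, the Stein/Calderón extension theorem produces a bounded linear operator $E : W^{r,p}(B^d) \to W^{r,p}(\RR^d)$ such that $Ef|_{B^d} = f$ and
\[
\mnorm{Ef}_{W^{r,p}([-2,2]^d)} \leq C_1 \cdot \mnorm{f}_{W^{r,p}(B^d)} \leq C_1
\]
for every $f \in \sob$, where $C_1 = C_1(d,r,p)$. I will work with $\tilde{f} \defeq Ef$ restricted to the cube $Q \defeq [-2,2]^d$, so that $\tilde{f} \in W^{r,p}(Q)$ with a controlled Sobolev norm.

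\emph{Step 3: Apply Jackson's inequality on a cube.} Here I invoke the classical algebraic polynomial Jackson theorem (due to Timan, Nikolskii and Dubiner, and reproved by many authors): for every $\tilde{f} \in W^{r,p}(Q)$ and every $s \in \NN$ there is a polynomial $p \in P_s(Q)$ with
\[
\mnorm{\tilde{f} - p}_{L^p(Q)} \leq C_2 \cdot s^{-r} \cdot \mnorm{\tilde{f}}_{W^{r,p}(Q)}
\]
where $C_2 = C_2(d,r,p)$. Restricting $p$ to $B^d$ and combining with Steps 1 and 2 yields the desired bound with $C \defeq |B^d|^{1/q-1/p} \cdot C_2 \cdot C_1$.

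\emph{Main obstacle.} The substantive ingredient is the Jackson-type bound on the cube in Step 3, which is a deep classical result; the extension and Hölder steps are routine. An alternative self-contained route would be to construct $p$ directly by partitioning $Q$ into cells of side $\sim 1/s$, applying the Bramble--Hilbert lemma locally to averaged Taylor polynomials of degree $r-1$, and then gluing the local polynomials into a single global polynomial of degree $\sim s$ via a tensor-product interpolation, but this is technically heavier than just citing the known cube estimate (as the paper does via the reference to Mhaskar).
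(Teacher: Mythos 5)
The paper does not actually prove this proposition: it is stated as folklore and justified only by a citation to \cite[Equation~(2.10)]{mhaskar1996neural}, so there is no internal argument to compare against. Your outline (Hölder reduction from $L^q$ to $L^p$ on the finite-measure ball, Stein--Calderón extension to a cube, and the classical multivariate Jackson inequality on the cube) is the standard and correct way to derive exactly this statement, and you correctly identify that the only non-routine ingredient is the cube Jackson bound, which is a legitimate citation of known results -- precisely the level of rigor the paper itself adopts here.
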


\subsection{Quasi-projection operator onto \texorpdfstring{$\polyy_s(B^d)$}{Pₛ}}
\label{sec:ortho}
\paull{The main goal of this subsection is to set up certain ``quasi-projection'' operators $\prs: L^1(B^d) \to \polyy_{2s-1}(B^d)$ for $s \in \NN$
such that $\prs(P)= P$ for all $P \in \polyy_s(B^d)$ and such that $\sup_{s \in \NN} \mnorm{\prs}_{L^1(B^d) \to L^1(B^d)} < \infty$.}

The space $L^2(B^d)$ together with the inner product defined in \eqref{eq:inprod} forms a Hilbert space. 
Since the set $\polyy(B^d)$ is dense in $L^2(B^d)$ and the set 
\[
\mathcal{B} \defeq \left\{ x^{\kk}: \ \kk \in \NN_0^d\right\}
\]
forms a basis of $\polyy(B^d)$, we conclude that the linear span of $\mathcal{B}$ is dense in $L^2(B^d)$.
Let the map $\varphi: \NN \to \NN_0^d$ be a bijection with the property that $n \mapsto \abs{\varphi(n)}$ is non-decreasing. 
By \paul{applying the Gram-Schmidt algorithm to} $\mathcal{B}$ with respect to the inner product $\langle \cdot, \cdot \rangle$ on $L^2(B^d)$
in the order that is given by the enumeration $\varphi$, we obtain a countable set 
$\Pi^d = \{P_i\}_{i \in \NN}\subseteq \polyy(B^d)$ with the following
properties:
\begin{enumerate}
\item $\Pi^d$ is an orthonormal basis of $L^2(B^d)$.
\item For each $s \in \NN$ the set $\Pi^d_s \defeq \{P_i\}_{i \in I_s}$ forms an orthonormal basis of $\polyy_s(B^d)$, where we define 
$I_s \defeq \{i \in \NN: \ \mathrm{deg}(P_i) \leq s\}$.
\end{enumerate}

For a given $s \in \NN_0$, we let $J_s \defeq \{i \in \NN: \ \mathrm{deg}(P_i) = s\}$. 
For $x,y \in B^d$, we then set 
\[
Q_s(x,y) \defeq \underset{i \in J_s}{\sum} P_i(x)P_i(y), \qquad L_s(x,y) \defeq \sum_{k=0}^\infty \eta \left( \frac{k}{s}\right) Q_k(x,y) \quad \text{for } s  \in \NN,
\]
where $\eta : \RR \to \RR$ is a smooth function with $\eta(x) = 1$ for $x \in [-1,1]$ and $\eta(x) = 0$ for $\abs{x} \geq 2$.
Lastly, following \cite[Def.~11.5.1, Def.~11.1.1, bottom~of~p.~268]{dai2013approximation}, for $ f \in L^1(B^d)$, we define $\prs f : B^d \to \RR$ via 
\[
(\prs f)(x) \defeq \langle f, L_s(x, \bullet)\rangle.
\]
For $f \in L^1(B^d)$, a computation shows 
\begin{align}
(\prs f)(x) &= \langle f, L_s(x, \bullet)\rangle = \sum_{k= 0}^{2s-1} \eta(k/s) \cdot \langle f, Q_k(x, \bullet)\rangle  
 = \sum_{k= 0}^{2s-1} \underset{i \in J_k}{\sum}\eta(k/s) \cdot P_i(x) \cdot \langle f, P_i\rangle   \nonumber\\
 \label{eq:projiden}
 &= \underset{i \in I_{2s-1}}{\sum} \eta(\mathrm{deg}(P_i)/ s) \cdot \langle f , P_i \rangle \cdot P_i(x)
 = \underset{i \in I_{2s-1}}{\sum} a_{i,s}\cdot \langle f , P_i \rangle \cdot P_i(x),
\end{align}
where we denote $a_{i,s} \defeq \eta(\mathrm{deg}(P_i)/ s)$.

Alternatively, motivated by \cite[Equation~(11.1.12)]{dai2013approximation}, if we let $\mathrm{proj}_k (f) \defeq \sum_{j \in J_k} \langle f, P_j \rangle \cdot P_j$ denote the orthogonal projection onto the subspace spanned by $\{P_j\}_{j \in J_k}$,
we may write 
\[
\prs f = \sum_{k= 0}^\infty \eta(k/s) \cdot \mathrm{proj}_k(f).
\]
\paull{In order to show that $\prs$ satisfies the properties stated at the beginning of this subsection it will be helpful to define, for $k,\sigma \in \NN_0$,} 
\[
S_k^\sigma(f) \defeq \frac{1}{\binom{k+\sigma}{\sigma}} \cdot \sum_{j=0}^k \binom{k -j + \sigma}{\sigma} \cdot \mathrm{proj}_j(f),
\]
see \cite[Equation~(11.2.8)~and~(A.4.2)]{dai2013approximation}.
The sequence $(S_k^\sigma(f))_{k \in \NN_0}$ is called the \paul{sequence of }$\sigma$-Cesaro means of the sequence $(\mathrm{proj}_k(f))_{k \in \NN_0}$.

Moreover, for a function $g: \RR \to \RR$, we let $(\Delta g)(x) \defeq g(x) - g(x+1)$ and recursively 
define $\Delta^{\sigma + 1}g \defeq \Delta(\Delta^\sigma(g))$ for any $\sigma \in \NN_0$.
The following technical lemma provides a useful identity.
\begin{lemma}\label{lem:deltarewrite}
For $s \in \NN$ and $\sigma \in \NN_0$, let everything be defined as above. 
We let $\eta^\ast(x) \defeq \eta(x/s)$.
Then  
\begin{equation*}
\prs f = \sum_{k= 0}^\infty \ \left(\Delta^{\sigma + 1}\eta^\ast\right)(k) \cdot \binom{k + \sigma}{ \sigma} \cdot S_k^\sigma(f).
\end{equation*}
\end{lemma}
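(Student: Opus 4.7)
The plan is to expand the right-hand side, interchange the order of summation, and identify the coefficient of each $\mathrm{proj}_j(f)$ as $\eta^\ast(j)$. Substituting the definition of $S_k^\sigma(f)$ gives
\begin{align*}
\sum_{k=0}^\infty (\Delta^{\sigma+1}\eta^\ast)(k)\binom{k+\sigma}{\sigma}S_k^\sigma(f)
&= \sum_{k=0}^\infty (\Delta^{\sigma+1}\eta^\ast)(k)\sum_{j=0}^k \binom{k-j+\sigma}{\sigma}\mathrm{proj}_j(f) \\
&= \sum_{j=0}^\infty \mathrm{proj}_j(f)\cdot c_j,
\end{align*}
where $c_j \defeq \sum_{k=j}^\infty (\Delta^{\sigma+1}\eta^\ast)(k)\binom{k-j+\sigma}{\sigma}$. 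The swap of sums is unproblematic: since $\eta(x)=0$ for $\abs{x}\geq 2$, we have $\eta^\ast(k)=\eta(k/s)=0$ for $k\geq 2s$, so each sum is really finite. After reindexing via $m=k-j$ and setting $g_j(x)\defeq \eta^\ast(x+j)$, I may rewrite $c_j = \sum_{m=0}^\infty \binom{m+\sigma}{\sigma}(\Delta^{\sigma+1}g_j)(m)$.

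It therefore suffices to prove the discrete identity
\begin{equation*}
\sum_{m=0}^\infty \binom{m+\sigma}{\sigma}(\Delta^{\sigma+1}g)(m)=g(0)
\end{equation*}
for every compactly supported $g:\NN_0\to\RR$. Applied to $g_j$, this yields $c_j=g_j(0)=\eta^\ast(j)$, whence the expression above collapses to $\sum_{j=0}^\infty \eta^\ast(j)\,\mathrm{proj}_j(f) = \prs f$, which matches the representation of $\prs f$ derived in \eqref{eq:projiden}.

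The discrete identity I would prove by induction on $\sigma$. The base case $\sigma=0$ is the telescoping identity $\sum_{m=0}^\infty (g(m)-g(m+1))=g(0)$, which holds thanks to the compact support of $g$. For the inductive step, I would write $\Delta^{\sigma+2}g=\Delta^{\sigma+1}(\Delta g)$, perform one Abel-type summation by parts, and use Pascal's rule $\binom{m+\sigma+1}{\sigma+1}-\binom{m+\sigma}{\sigma+1}=\binom{m+\sigma}{\sigma}$ to reduce the order by one and invoke the induction hypothesis. A quick sanity check is the formal operator identity $(I-T)^{-(\sigma+1)}(I-T)^{\sigma+1}=I$ applied to $g$ at the origin, with $T$ the forward shift and the generating function $(1-z)^{-\sigma-1}=\sum_{m\geq 0}\binom{m+\sigma}{\sigma}z^m$ explaining the appearance of the binomial coefficients; the compact support of $g$ turns the formal manipulation into a rigorous one. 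The main obstacle is really only this discrete identity; everything else is routine bookkeeping, made clean by the compact support of $\eta^\ast$.
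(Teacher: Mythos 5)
Your argument is correct. The reduction to the scalar identity $\sum_{m=0}^\infty \binom{m+\sigma}{\sigma}(\Delta^{\sigma+1}g)(m)=g(0)$ for compactly supported $g$ is valid (the interchange of sums is justified exactly as you say, since $\eta^\ast(k)=0$ for $k\geq 2s$ makes everything a finite sum), and that identity is true: the coefficient of $g(j)$ in the double sum is $\sum_i(-1)^i\binom{\sigma+1}{i}\binom{j-i+\sigma}{\sigma}$, i.e., the coefficient of $z^j$ in $(1-z)^{\sigma+1}(1-z)^{-(\sigma+1)}=1$, which is $\delta_{j0}$; equivalently, your induction (telescoping base case, then one Abel summation plus Pascal's rule) goes through. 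Your route differs from the paper's in organization rather than in substance. The paper inducts on $\sigma$ directly at the level of the Cesàro means, with the key step being the recursion $\binom{k+\sigma+1}{k}S_k^{\sigma+1}(f)-\binom{k+\sigma}{k-1}S_{k-1}^{\sigma+1}(f)=\binom{k+\sigma}{\sigma}S_k^{\sigma}(f)$ (again Pascal's rule), followed by the same summation by parts; it never expands $S_k^\sigma(f)$ back into the $\mathrm{proj}_j(f)$. You instead swap the order of summation first, which decouples the combinatorics from the operators entirely and leaves a purely scalar statement about finite differences applied to the shifts $\eta^\ast(\cdot+j)$. What your version buys is a cleaner separation of concerns and a key lemma that is a standard fact about $(I-T)^{\pm(\sigma+1)}$; what the paper's version buys is that it stays in the form in which the result is actually used in the proof of \Cref{prop:prs}(3), where the bound $\mnorm{S_k^\sigma(f)}_{L^1}\lesssim\mnorm{f}_{L^1}$ is applied to the $S_k^\sigma$ directly. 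Either way the proof is complete once the inductive step you sketch is written out.
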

The proof of \Cref{lem:deltarewrite} can be found in \Cref{app:postponed1}.

We can now deduce three properties of the operator $\prs$ that are central for the present work. 
\begin{proposition}[{cf. \cite[Theorem~11.5.2]{dai2013approximation}}]\label{prop:prs}
\begin{enumerate}
\item $\prs: L^1(B^d) \to \polyy_{2s-1}(B^d)$ is a well-defined linear operator.
\item For every $P \in \polyy_s(B^d)$ we have $\prs P = P$.
\item There exists a constant $C = C(d)>0$ such that $\mnorm{\prs f}_{L^1(B^d)} \leq C \cdot \mnorm{f}_{L^1(B^d)}$ for every $f \in L^1(B^d)$ and $s \in \NN$.
\end{enumerate}
\end{proposition}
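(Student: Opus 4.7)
My plan is to dispatch parts (1) and (2) as direct consequences of the spectral expansion \eqref{eq:projiden}, and then to handle part (3), which is the heart of the matter, via the finite-difference/Cesàro-means representation from Lemma \ref{lem:deltarewrite} combined with a standard black-box $L^1$-bound for Cesàro means of orthogonal polynomial expansions on $B^d$.

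For part (1), I would argue that the inner products $\langle f, P_i\rangle$ appearing in \eqref{eq:projiden} are all well-defined whenever $f \in L^1(B^d)$, because each $P_i$ is a polynomial and hence bounded on the compact set $B^d$. Since $I_{2s-1}$ is finite and each $P_i$ with $i \in I_{2s-1}$ has degree at most $2s-1$, the formula \eqref{eq:projiden} exhibits $\prs f$ as an element of $P_{2s-1}(B^d)$; linearity is immediate from the bilinearity of $\langle \cdot , \cdot \rangle$. For part (2), I would expand any $P \in P_s(B^d)$ in the orthonormal basis $\Pi^d$: because $P_j$ with $j \notin I_s$ has degree exceeding $s$ and is orthogonal to $P_s(B^d) = \spann\{P_i : i \in I_s\}$, we get $\langle P, P_j\rangle = 0$ for such $j$. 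On the other hand, for $i \in I_s$ we have $\deg(P_i)/s \leq 1$, so $a_{i,s} = \eta(\deg(P_i)/s) = 1$ by the choice of $\eta$. Plugging into \eqref{eq:projiden} collapses the sum to $\sum_{i \in I_s} \langle P, P_i\rangle P_i = P$.

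For part (3), I would fix an integer $\sigma = \sigma(d)$ large enough so that the Cesàro means are uniformly $L^1$-bounded on $B^d$, that is,
\[
\mnorm{S_k^\sigma(f)}_{L^1(B^d)} \leq C_1 \cdot \mnorm{f}_{L^1(B^d)} \quad \text{for all } k \in \NN_0, \ f \in L^1(B^d);
\]
this is the classical Ces\`aro boundedness theorem for orthogonal polynomial expansions on the ball that the authors already reference through \cite{dai2013approximation}, and for the present purpose it can be treated as a black box. Using Lemma \ref{lem:deltarewrite} together with the triangle inequality, I would then estimate
\[
\mnorm{\prs f}_{L^1(B^d)} \leq \sum_{k=0}^{\infty} \abs{(\Delta^{\sigma+1}\eta^\ast)(k)} \cdot \binom{k+\sigma}{\sigma} \cdot \mnorm{S_k^\sigma(f)}_{L^1(B^d)}.
\]
Three elementary facts finish the job: first, because $\eta$ is supported in $[-2,2]$, the function $\eta^\ast(x) = \eta(x/s)$ vanishes for $\abs{x} \geq 2s$, so the sum is effectively over $k < 2s$; second, the mean-value theorem applied $\sigma+1$ times yields $\abs{(\Delta^{\sigma+1}\eta^\ast)(k)} \leq \mnorm{(\eta^\ast)^{(\sigma+1)}}_{L^\infty(\RR)} \leq C_2 \cdot s^{-(\sigma+1)}$ with $C_2 = \mnorm{\eta^{(\sigma+1)}}_{L^\infty(\RR)}$; third, $\binom{k+\sigma}{\sigma} \leq C_3 \cdot s^\sigma$ uniformly for $k < 2s$. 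Multiplying these bounds together and summing over the $O(s)$ surviving indices yields $\mnorm{\prs f}_{L^1(B^d)} \leq C \cdot \mnorm{f}_{L^1(B^d)}$ with $C = C(d)$ independent of $s$, as desired.

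The main obstacle is the uniform $L^1$-bound for $S_k^\sigma$; this is genuinely nontrivial and rests on the delicate harmonic analysis of orthogonal expansions on $B^d$. However, since this is precisely the result pre-packaged in \cite{dai2013approximation}, the entire proof of Proposition \ref{prop:prs} reduces to the bookkeeping outlined above once the right $\sigma = \sigma(d)$ has been chosen.
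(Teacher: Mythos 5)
Your proposal is correct and follows essentially the same route as the paper's own proof: parts (1) and (2) from the expansion \eqref{eq:projiden} and orthogonality, and part (3) via \Cref{lem:deltarewrite}, the finite-difference bound $\abs{(\Delta^{\sigma+1}\eta^\ast)(k)} \leq s^{-\sigma-1}\mnorms{\eta^{(\sigma+1)}}_{L^\infty}$, the binomial estimate, and the uniform $L^1$-boundedness of the Cesàro means $S_k^\sigma$ for $\sigma > d/2$ from \cite{dai2013approximation}. No substantive differences.
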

\Cref{prop:prs} is stated in \cite{dai2013approximation}, but the proof is omitted, since it is similar to the proof of a different result in \cite{dai2013approximation}.
Since \Cref{prop:prs} is essential for our argument and to make our paper more self-contained, we give a proof.
\begin{proof}[Proof of \Cref{prop:prs}]
\begin{enumerate}
\item Follows from \Cref{eq:projiden}.
\item Let $P \in \polyy_s(B^d)$.
By orthogonality, we have $\langle P, P_i \rangle = 0$ if $\mathrm{deg}(P_i) > s$. 
By \eqref{eq:projiden}, we hence get 
\begin{align*}
\prs P &= \underset{i \in I_{2s-1}}{\sum} \eta(\mathrm{deg}(P_i)/ s) \cdot \langle P , P_i \rangle \cdot P_i \\
 &= \underset{i \in I_{s}}{\sum} \eta(\underbrace{\mathrm{deg}(P_i)/ s}_{\in [0,1]}) \cdot \langle P, P_i \rangle \cdot P_i
 = \underset{i \in I_{s}}{\sum}  \langle P, P_i \rangle \cdot P_i = P,
\end{align*}
where the last step uses that $(P_i)_{i \in I_s}$ is an orthonormal basis of $\polyy_s(B^d)$.

\item 
By \Cref{lem:deltarewrite}, we have
\[
\prs f = \sum_{k= 0}^\infty \ \left(\left(\Delta^{\sigma + 1}\eta^\ast\right)(k) \cdot \binom{k + \sigma}{ \sigma} \cdot S_k^\sigma(f)\right) \quad \text{for every } \sigma \in \NN_0
\]
with $\eta^\ast (x) \defeq \eta(x/s)$.
Note that the choice of  $\eta$ and the definition of $\Delta^{\sigma + 1}$ imply that $\Delta^{\sigma + 1}\eta^\ast(k) = 0$ for $k \geq 2s$. 
We hence get
\[
\mnorm{\prs f}_{L^1(B^d)} \leq \sum_{k= 0}^{2s-1} \abs{\left(\Delta^{\sigma + 1} \eta^\ast\right)(k)} \cdot \binom{k+\sigma}{\sigma} \cdot \mnorm{S_k^\sigma(f)}_{L^1(B^d)}.
\]

We observe, \paull{with $(\eta^\ast)^{(\sigma + 1)}$ denoting the $(\sigma + 1)$-th derivative of $\eta^\ast$},
\begin{align*}
\abs{\left(\Delta^{\sigma + 1} \eta^\ast\right)(k)} &\leq \int_{[0,1]^{\sigma + 1}} \abs{(\eta^\ast)^{(\sigma + 1)}\left(k + u_1 + \dots + u_{\sigma + 1}\right)} \ \dd u_1 \dots \dd u_{\sigma + 1} \\
&\leq \underset{x \in \RR}{\sup} \abs{(\eta^\ast)^{(\sigma + 1)}(x)},
\end{align*}
as follows from \cite[Proposition~A.3.1~(ii)]{dai2013approximation}.
Since $(\eta^\ast)^{(\sigma + 1)}(x) = s^{-\sigma - 1} \cdot \eta^{(\sigma + 1)}(x/s)$ for every $x \in \RR$ by the chain rule, we get 
\[
\abs{\left(\Delta^{\sigma + 1} \eta^\ast\right)(k)} \leq s^{-\sigma - 1} \cdot C_1
\]
with $C_1 = C_1(\sigma) \defeq \underset{x \in \RR}{\sup} \abs{\eta^{(\sigma + 1)}(x)} < \infty$.

Secondly, using \cite[Exercise~0.0.5]{vershynin_high-dimensional_2018}, we have 
\begin{align*}
\binom{k+\sigma}{\sigma} \leq \left(\frac{\ee (k+\sigma)}{\sigma}\right)^{\sigma} 
= \left(\frac{\ee k}{ \sigma} + \ee\right)^\sigma \leq \ee^\sigma \cdot (k + 1)^\sigma = C_2 \cdot (k+1)^\sigma
\end{align*}
for $\sigma \geq 1$, with $C_2 = C_2(\sigma) = \ee^\sigma$.

Lastly, by picking $\sigma > \frac{d}{2}$, we can ensure that 
\[
\mnorm{S_k^\sigma(f)}_{L^1(B^d)} \leq C_3 \cdot \mnorm{f}_{L^1(B^d)} \quad \text{for all } f \in L^1(B^d)
\]
with a constant $C_3 = C_3(d, \sigma)$ which does not depend on $k$.
This follows from \cite[Theorem~11.4.1]{dai2013approximation}, where we choose $\kappa = (0,\dots, 0, 1/2) \in \RR^{d+1}$ in order to obtain the Lebesgue measure. 

Hence, by fixing $\sigma > d/2$, we obtain
\begin{align*}
\mnorm{\prs f}_{L^1(B^d)} &\leq C_1 \cdot C_2 \cdot C_3 \cdot s^{-\sigma - 1} \cdot \sum_{k= 0}^{2s-1} (k+1)^\sigma \cdot \mnorm{f}_{L^1(B^d)} \\
&\leq C_1 \cdot C_2 \cdot C_3 \cdot s^{-\sigma - 1} \cdot \sum_{k= 0}^{2s-1} (2s)^\sigma  \cdot \mnorm{f}_{L^1(B^d)}\\
&\leq C_1 \cdot C_2 \cdot C_3 \cdot s^{-\sigma - 1}\cdot 2^{\sigma + 1} \cdot s^{\sigma + 1} \cdot \mnorm{f}_{L^1(B^d)}= C \cdot \mnorm{f}_{L^1(B^d)}, 
\end{align*}
where $C = C(d) \defeq C_1C_2C_3 \cdot 2^{\sigma + 1}$.
Note that the choice of $\sigma$ only depends on $d$.
This proves the claim. \qedhere
\end{enumerate}
\end{proof}
The properties from \Cref{prop:prs} justify that we call $\prs$ a \emph{quasi-projection} onto $\polyy_s(B^d)$ with range in $\polyy_{2s-1}(B^d)$.
Crucially, this operator is bounded with respect to the $L^1$-norm and the operator norm can be upper bounded independently of $s$, 
whereas the same does not hold for an \emph{orthogonal} projection onto $\polyy_s(B^d)$.

\section{Proof of the lower bound}\label{sec:low_bound}
\paul{
In this section, we prove the \paull{asymptotic} lower bound of $n^{-r/(d-\ell)}$ for the approximation error 
of Sobolev functions using linear combinations of \paull{$n$} multivariate ridge functions, see \Cref{thm:main_1}.
We split this section into three parts:
Firstly, we provide an overview of the proof strategy in \Cref{sec:overview}. 
In \Cref{sec:signset}, we prove that the cardinality of a certain set, defined in \eqref{eq:signset}, can be bounded from above in a suitable way.
This upper bound is then used in \Cref{sec:lower_final} to prove \Cref{thm:main_1}.
\subsection{Proof overview}\label{sec:overview}
Since the proof of the lower bound is in large parts quite technical, we provide a proof overview to explain the underlying idea of the proof and its main steps. 

First, it is not difficult to see that it suffices to prove \Cref{thm:main_1} for the set $\rndl$ and for the case $p=\infty, q= 1$, i.e., it suffices to show
\begin{equation}\label{eq:to_show}
\sup_{f \in \mathcal{B}(W^{r,\infty}_d)} \inf_{R \in \rndl} \mnorm{f - R}_{L^1(B^d)} \gtrsim_{d,\ell,r} n^{-r/(d-\ell)},
\end{equation}
where the notation ``$\gtrsim_{d,\ell,r}$'' indicates an inequality up to a multiplicative factor depending only on $d,\ell$ and $r$.

As the next core idea of the proof, we note that the infimum over $\rndl$ in \eqref{eq:to_show} can effectively be replaced by an infimum over $\prs(\rndl)$, where the degree $s \in \NN$ needs to be carefully balanced with the number of summands $n$.
Here, we recall that $\prs$ denotes the quasi projection onto $\polyy_s(B^d)$ with range in $\polyy_{2s-1}(B^d)$ as discussed in \Cref{sec:ortho}.
To see that it is enough to consider $\prs(\rndl)$, we pick a (large) constant $C = C(d, \ell,r) > 0$ and pick $s \in \NN$ such that $C \cdot n \leq s^{d-\ell} \leq 2C \cdot n$.
Using Jackson's inequality (see \Cref{prop:jack}) and the properties of $\prs$ noted in \Cref{prop:prs} one can deduce the existence of a constant $C_{\mathrm{proj}} = C_{\mathrm{proj}}(d,r)>0$ such that 
\begin{equation}\label{eq:jackproj}
\sup_{f \in \mathcal{B}(W^{r,\infty}_d)} \mnorm{f - \prs(f)}_{L^1(B^d)} \leq C_{\mathrm{proj}} \cdot s^{-r} \leq C_{\mathrm{proj}} \cdot C^{-r/(d-\ell)} \cdot n^{-r/(d-\ell)}.
\end{equation}
The goal is then to show that 
\begin{equation}\label{eq:to_show_proj}
\sup_{f \in \mathcal{B}(W^{r,\infty}_d)} \inf_{P \in \prs(\rndl)} \mnorm{f - P}_{L^1(B^d)} \gtrsim_{d,\ell,r} C^{(r/d) - (r/(d-\ell))} \cdot n^{-r/(d-\ell)}.
\end{equation}
Indeed, since 
\[
\mnorm{f-R}_{L^1(B^d)} \gtrsim_{d} \mnorm{\prs(f)-\prs(R)}_{L^1(B^d)} \geq  \mnorm{f-\prs(R)}_{L^1(B^d)} - \mnorm{f-\prs(f)}_{L^1(B^d)}
\]
for every $R \in \rndl$, we see that \eqref{eq:jackproj} and \eqref{eq:to_show_proj} together imply \eqref{eq:to_show} if $C$ is chosen sufficiently large. 

Next, we construct a function $f \in \mathcal{B}(W^{r,\infty}_d)$ realizing the lower bound in \eqref{eq:to_show_proj} by considering a set of \paull{certain sums of} smooth bump functions. 
More precisely, we pick $m \asymp_{d,\ell,r} C^{-1} \cdot s^d$ (where $C = C(d, \ell,r)>0$ is the constant from above which balances the degree $s$ with $n$).
We then choose $\xi_1, \dots, \xi_m \in B^d$ such that the cubes $Q_i \defeq \xi_i + Q$ are pairwise disjoint subsets of $B^d$ (in fact, the $Q_i$ have distance of order $m^{-1/d}$ from each other), where $Q \subseteq \RR^d$ is a cube with $\metalambda^d(Q) \asymp_{d} m^{-1}$, meaning that the side lengths are of order $m^{-1/d}$.
For given $\eps \in \{ \pm 1\}^m$, we then let $f_\eps  \in \mathcal{B}(W^{r,\infty}_d)$ be a smooth function satisfying $f_\eps \equiv \eps_i \cdot \frac{\kappa}{m^{r/d}}$ on $Q_i$ for every $i \in \{1, \dots, m\}$.  
The constant $\kappa = \kappa(d,r)>0$ is needed to ensure $\mnorm{f_\eps}_{W^{r,\infty}_d} \leq 1$.
For arbitrary $P \in \prs(\rndl)$ we then compute
\begin{align}
\mnorm{f_\eps - P}_{L^1(B^d)} &\geq \sum_{i=1}^m \int_{Q_i} \abs{f_\eps(x) - P(x)} \ \dd x = \sum_{i=1}^m \int_{Q} \abs{f_\eps(\xi_i + t) - P(\xi_i + t)} \ \dd t \nonumber\\
&\geq \int_Q \ \inf_{\tilde{P} \in \prs(\rndl), \eta \in \RR^d} \sum_{i=1}^m \abs{\frac{\kappa}{m^{r/d}}\eps_i - \tilde{P}(\xi_i + \eta)} \ \dd t \nonumber\\
\label{eq:l1bound}
&\asymp_{d,r} \frac{1}{m^{1 + r/d}} \cdot \inf_{\tilde{P} \in \prs(\rndl), \eta \in \RR^d} \mnorm{\eps - \left(\tilde{P}(\xi_i + \eta)\right)_{i=1}^m}_{\ell^1};
\end{align}
see \Cref{prop:low_bound_eps} for the details.
By \cite[Lemma~6]{maiorov2010best} it is known that if 
\begin{equation}\label{eq:card_bound}
\abs{\left\{ \left(\sgn(\tilde{P}(\xi_1 + \eta)), \dots, \sgn(\tilde{P}(\xi_m + \eta))\right): \ \tilde{P} \in \prs(\rndl), \eta \in \RR^d\right\}} \leq 2^{m/4}
\end{equation}
then there exists $\eps^\ast \in \{\pm 1\}^m$ with 
\[
\inf_{\tilde{P} \in \prs(\rndl), \eta \in \RR^d} \mnorm{\eps^\ast - \left(\tilde{P}(\xi_i + \eta)\right)_{i=1}^m}_{\ell^1} \geq am,
\]
where $a > 0$ is an absolute constant. 
Plugging this into \eqref{eq:l1bound} yields 
\[
\mnorm{f_{\eps^\ast} - P}_{L^1(B^d)} \gtrsim_{d,r} m^{-r/d} \asymp_{d,\ell,r} C^{r/d} \cdot s^{-r} 
\gtrsim_{d,\ell,r} C^{(r/d)- (r/(d-\ell))} \cdot n^{-r/(d-\ell)}, 
\]
as desired. 

It remains to show that \eqref{eq:card_bound} is indeed true. 
The proof of this fact is contained in \Cref{sec:signset} and is based on an application of \cite[Lemma 3]{maiorov_best_1999} together with a careful evaluation of the inner product
of a \paull{(multivariate)} ridge function and a given polynomial using multivariate polar coordinates; see \Cref{lem:inner_product_expansion}.

On a high level, the proof structure follows that of the result for univariate ridge functions in \cite{maiorov2010best}, 
properly modified and adapted to the multivariate setting. 
In particular, we highlight the following two key differences: 
\begin{itemize}
\item Central to the proof in \cite{maiorov2010best} is Lemma 2, where it is shown that the inner product (as defined in \eqref{eq:inprod}) between a (univariate) 
	ridge function and a polynomial
        from a special system of orthogonal polynomials admits a certain separation of variables. 
        We, in contrast, express the inner product between a (multivariate) ridge function and a fixed polynomial in a different way, proving a generalization of 
        \cite[Theorem~3]{maiorov_best_1999}; see \Cref{lem:inner_product_expansion}. 
        Moreover, while the specific choice of the system of orthogonal polynomials is of central importance in \cite{maiorov2010best}, we impose (almost) no restrictions
        on the set of orthogonal polynomials that we consider but can simply use the ``naive'' system defined in \Cref{sec:ortho}.
\item In contrast to the proof in \cite{maiorov2010best}, 
we do not use the \emph{orthogonal} projections onto the space $\polyy_s(B^d)$ but the quasi-projection operators onto $\polyy_s(B^d)$ with range in 
$\polyy_{2s-1}(B^d)$ defined in \Cref{sec:ortho}. 
The fact that these operators are uniformly bounded with respect to the $L^1$-norm enables us to show the lower bound with respect to the 
$L^1$-norm and therefore close the gap in the proof in \cite{maiorov2010best} which we describe in more detail in \Cref{sec:disc}.
\end{itemize}
}
\subsection{Sign set cardinality}\label{sec:signset}
\paul{In this section, we show that \eqref{eq:card_bound} is satisfied under suitable assumptions.
To this end,} let $d,\ell,m,n,s \in \NN$ with $\ell < d$ be arbitrary and fix\footnote{
Such a $\vartheta$ always exists: If $m^{1/d} \geq 2$ this follows from $m^{1/d} - \frac{m^{1/d}}{2} \geq 1$ and if $m^{1/d} \in [1,2]$ we can simply pick $\vartheta = 1$.
}
$\vartheta = \vartheta(d,m) \in \NN$ satisfying 
\[
\frac{m^{1/d}}{2} \leq \vartheta \leq m^{1/d}.
\]
Consider the lattice
\[
\Xi \defeq \left\{ \left(\frac{i_1 + 1/2}{\sqrt{d}\vartheta},\dots,\frac{i_d + 1/2}{\sqrt{d}\vartheta} \right): \ i_1,\dots, i_d \in \ZZ \cap [-\vartheta, \vartheta - 1]\right\} \subseteq B^d.
\]
Then it holds that $\abs{\Xi} = (2\vartheta)^d \geq m$.
Let $\{\xi_1, \dots, \xi_m\} \subseteq \Xi$ with $\xi_i \neq \xi_j$ for $i \neq j$.
We then set
\begin{align}\label{eq:def_pi}
\Pi_{m,s,n,\ell,d}  \defeq \Bigl\{(P(\xi_1+t),\ldots,P(\xi_m+t)): \ P\in\Pr\nolimits_s (\rndl),\ t\in\RR^d\Bigr\}\subseteq\RR^m.
\end{align}
Here, $\prs$ is as introduced in \Cref{sec:ortho}.
Moreover, we define 
\[
E^m \defeq \{\pm 1\}^m = \{ \eps = (\eps_1 ,\dots, \eps_m): \ \eps_i = \pm 1, \ i = 1,\dots,m\}.
\]
\paull{We want to show that the set defined in \eqref{eq:card_bound}, i.e.,}
\begin{equation}\label{eq:signset}
\sgn(\Pi_{m,s,n,\ell,d})\defeq\{(\sgn (h_1),\ldots,\sgn (h_m)):h\in\Pi_{m,s,n,\ell,d}\} \subseteq E^m
\end{equation}
 (where $\sgn(a)=1$ if $a\geq0$ and $\sgn(a) = -1$ if $a<0)$ is ``small'' relative to $E^m$.
 Precisely, in this section, we will show the following statement, which may be seen as a generalization of \cite[Lemma~4]{maiorov2010best} to the case of multivariate ridge functions. 
\begin{lemma}\label{lem:pi_small}
Let $d, \ell \in \NN$ with $\ell < d$ and $c_1> 0$ be arbitrary and let $c_0 \defeq 4^{\ell + 3} /c_1$.
Then there exists a constant $C = C(d, \ell, c_1)$ such that for any choice of 
$m,s,n\in \NN$ with  
\begin{equation}\label{eq:cond}
c_1 s^d \leq m \leq 2c_1 s^d, \quad c_0 n \leq s^{d-\ell} \leq 2c_0 n \quad \mathrm{and} \quad s \geq C 
\end{equation}
we have
\begin{align*}
\lvert\sgn(\Pi_{m,s,n,\ell,d})\rvert\leq 2^{m/4}.
\end{align*}
Here, $\Pi_{m,s,n,\ell,d}$ and $\sgn(\Pi_{m,s,n,\ell,d})$ are as defined in \eqref{eq:def_pi} and \eqref{eq:signset}.
\end{lemma}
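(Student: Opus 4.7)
The plan is to parametrize the set $\Pi_{m,s,n,\ell,d}$ by finitely many real parameters, express each of its $m$ coordinates as a polynomial in those parameters, and bound $\lvert\sgn(\Pi_{m,s,n,\ell,d})\rvert$ via a Warren-type counting argument that exploits the multilinear structure of the parametrization.

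First, using \Cref{prop:orthrows} I may assume that each matrix $A_j \in \RR^{\ell\times d}$ has orthonormal rows, and since $\prs$ only depends on the projection onto $P_{2s-1}(B^d)$, each ridge function $r_j$ may be replaced by a polynomial $p_j \in P_{2s-1}(\RR^\ell)$. This yields a parametrization of $\prs(\rndl)$ by the entries of the $A_j$'s (about $n\ell d$ real numbers) together with the coefficients of the $p_j$'s (about $nD_\ell(s)$ real numbers, where $D_\ell(s):=\binom{2s-1+\ell}{\ell}\sim s^\ell/\ell!$); including the translation $t\in\RR^d$, the total count is $N\sim n(\ell d + s^\ell) + d$. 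Using the generalization \Cref{lem:inner_product_expansion} of \cite[Theorem~3]{maiorov_best_1999} to handle the inner products in the expansion of $\prs$, each coordinate
\[
h_i = \sum_{j=1}^n \bigl(\prs(p_j\circ A_j)\bigr)(\xi_i+t)
\]
becomes a polynomial in the parameters which is \emph{linear} in the coefficients of the $p_j$'s, polynomial of degree $\leq 2s-1$ in the entries of each $A_j$, and polynomial of degree $\leq 2s-1$ in $t$.

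The key estimate is the sign-pattern count, and here a direct application of Warren's theorem on $\RR^N$ is insufficient: with $N\sim s^d$ and polynomial degree $\sim s$ one obtains only $(Cs)^N=2^{\Theta(s^d\log s)}$, exceeding the target $2^{m/4}=2^{\Theta(s^d)}$. I would therefore split the parameters into a \emph{linear} group (the coefficients of the $p_j$'s, dimension $nD_\ell(s)\sim s^d$) and a \emph{nonlinear} group (the $A_j$'s and $t$, dimension $n\ell d + d\sim s^{d-\ell}$), and proceed in two stages. For fixed nonlinear parameters, the $h_i$'s are linear in the linear parameters, so a Schl\"afli-type bound for central hyperplane arrangements gives at most $\bigl(2em/(nD_\ell(s))\bigr)^{nD_\ell(s)}$ sign patterns per slice; the choice $c_0=4^{\ell+3}/c_1$ forces $m/(nD_\ell(s))$ to be bounded below by a constant of order $2^{\ell+6}\ell!$, which is exactly what is needed to reduce this per-slice count to at most $2^{m/8}$. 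As the nonlinear parameters then vary over $\RR^{n\ell d + d}$ with polynomials of degree $O(s)$, an application of Warren's theorem on this smaller space contributes an additional factor $2^{O(s^{d-\ell}\log s)}$, which is also $\leq 2^{m/8}$ for $s\geq C(d,\ell,c_1)$, precisely because $\ell\geq 1$ forces $s^{d-\ell}\log s=o(s^d)$.

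The main obstacle will be executing this two-stage counting rigorously, since one must track carefully how the combinatorial type of the hyperplane arrangement on the linear-parameter space varies with the nonlinear parameters, and verify that the specific constant $c_0=4^{\ell+3}/c_1$ absorbs all the powers of $2$, $e$, and $\ell!$ arising in the Schl\"afli bound. Once both stages are in place, combining them yields $\lvert\sgn(\Pi_{m,s,n,\ell,d})\rvert\leq 2^{m/8}\cdot 2^{m/8}=2^{m/4}$, as claimed.
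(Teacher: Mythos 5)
Your overall strategy --- parametrize $\Pi_{m,s,n,\ell,d}$ by a large block of parameters entering \emph{linearly} (coefficients, $\theta\sim n s^{\ell}\sim s^{d}/c_0$ of them) and a small block entering \emph{polynomially} (matrices and the shift $t$, $\lambda\sim n\ell d+d\sim s^{d-\ell}$ of them), then exploit this bi-degree structure in the sign-pattern count --- is exactly the paper's strategy, and your diagnosis of why plain Warren on $\RR^{N}$ with $N\sim s^{d}$ fails, as well as the arithmetic role of $c_0=4^{\ell+3}/c_1$, both match the paper's verification. The difference is in how the key count is executed, and that is where the gap sits.

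The set of sign patterns is $\bigcup_{\sigma}S(\sigma)$, where $S(\sigma)$ is the set of patterns realized as the linear parameters $b$ vary for fixed nonlinear parameters $\sigma$. Bounding each $\lvert S(\sigma)\rvert$ by $2^{m/8}$ via Schl\"afli does \emph{not} allow you to multiply by "a Warren factor on the $\sigma$-space": what you would need to control is the number of \emph{distinct sets} $S(\sigma)$, i.e., the number of combinatorial types of the central arrangement $\{b: h_i(b,\sigma)=0\}_{i\leq m}$ as $\sigma$ varies. That number is governed by the sign patterns of the $\theta\times\theta$ minors of the $m\times\theta$ coefficient matrix --- roughly $\binom{m}{\theta}$ polynomials of degree $\sim\theta s$ in $\sigma$ --- and Warren applied to those gives $2^{O(\lambda\,\theta\log(m/\theta))}=2^{O(s^{2d-\ell})}$, vastly exceeding $2^{m/8}$; Warren applied to the $h_i$ themselves on the $\sigma$-space controls per-$b$ slices, which is not the relevant quantity. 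The valid combined bound is precisely \Cref{lem:polynomial_manifold} (Maiorov's Lemma~3), whose proof runs through connected-component estimates for the zero set in the full $(b,\sigma)$-space rather than a product of slice bounds; you should invoke it rather than rederive it this way. A second, smaller issue: replacing each $r_j$ by $p_j\in P_{2s-1}(\RR^{\ell})$ \emph{with the same matrix $A_j$} amounts to claiming that the $L^2(B^d)$-orthogonal projection of $g(A_j\,\cdot)$ onto $P_{2s-1}(B^d)$ is again a ridge polynomial $p_j(A_j\,\cdot)$. This is true (after rotating $A_j$ to $I_{\ell\times d}$ and writing $x=(u,v)$, any $Q\in P_{2s-1}(B^d)$ orthogonal to all such ridge polynomials satisfies $\int_{\{\lVert v\rVert_2^2\leq 1-\lVert u\rVert_2^2\}}Q(u,v)\,\dd v\equiv 0$ and is therefore orthogonal to every bounded $g(A_j\,\cdot)$), but it is a nontrivial lemma you assert without proof; the paper sidesteps it by taking the coefficients $b_h(r_k)$ from \Cref{lem:inner_product_expansion} as the linear parameters instead.
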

 In the case that $\lvert\sgn(\Pi_{m,s,n,\ell,d})\rvert\leq 2^{m/4}$, there exists a vector $\varepsilon^\ast \in E^m$ that is ``far away'' from $\Pi_{m,s,n,\ell,d}$ and we will use
 this vector in \Cref{sec:lower_final} to construct a function that realizes the lower bound for ridge function approximation. 
 This intuition is backed up by the following lemma. 
 \begin{lemma}[{cf. \cite[Lemma~6]{maiorov2010best}}] \label{lem:epsbound}
Let $m \in \NN$ and $\Gamma \subseteq \RR^m$ with 
\[
\abs{\sgn(\Gamma)} \leq 2^{m/4}.
\]
Then there exists a vector $\eps^\ast \in E^m$ such that
\[
\underset{x \in \Gamma}{\inf} \onenorm{\eps^\ast - x} \geq am
\]
for an absolute constant $a>0$.
\end{lemma}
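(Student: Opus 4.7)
The plan is to reduce the $\ell^1$-distance lower bound to a Hamming-distance estimate on the discrete cube $E^m$, and then run a union bound that uses the assumed upper bound on $\abs{\sgn(\Gamma)}$.

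First, I would establish the pointwise comparison
\[
\mnorm{\eps - x}_1 \;\geq\; d_H(\eps, \sgn(x)) \qquad \text{for every } \eps \in E^m \text{ and } x \in \RR^m,
\]
where $d_H$ denotes the Hamming distance on $E^m$ and $\sgn(x) = (\sgn(x_1), \ldots, \sgn(x_m)) \in E^m$. This is immediate: in any coordinate $i$ with $\eps_i \neq \sgn(x_i)$ the entries $\eps_i \in \{-1,+1\}$ and $x_i$ carry opposite signs, forcing $\abs{\eps_i - x_i} \geq 1$, while the remaining coordinates contribute nonnegatively. Consequently, it suffices to produce a single $\eps^\ast \in E^m$ whose Hamming distance to every $\sigma \in \sgn(\Gamma)$ is at least $am$.

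For this, I would fix once and for all an absolute constant $a \in (0, 1/2)$ with $H(a) < 3/4$, where $H(t) \defeq -t \log_2 t - (1-t)\log_2(1-t)$ is the binary entropy; such an $a$ exists by continuity and $H(0)=0$ (for instance $a = 1/100$). The standard entropy bound for cumulative binomials then yields, for every fixed $\sigma \in E^m$,
\[
\absb{\setn{\eps \in E^m \,:\, d_H(\eps, \sigma) < am}} \;\leq\; \sum_{k=0}^{\lfloor am \rfloor} \binom{m}{k} \;\leq\; 2^{H(a) m}.
\]
Summing over the at most $2^{m/4}$ elements of $\sgn(\Gamma)$ via the union bound, the set of ``bad'' $\eps \in E^m$ — those within Hamming distance strictly less than $am$ of at least one $\sigma \in \sgn(\Gamma)$ — has cardinality at most $2^{m/4} \cdot 2^{H(a) m} = 2^{(1/4 + H(a)) m}$. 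By the choice of $a$ this is strictly smaller than $\abs{E^m} = 2^m$, so there exists a ``good'' $\eps^\ast \in E^m$ with $d_H(\eps^\ast, \sigma) \geq am$ for every $\sigma \in \sgn(\Gamma)$. Combined with the first step, this gives $\mnorm{\eps^\ast - x}_1 \geq am$ for all $x \in \Gamma$.

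The argument is essentially a single counting estimate, so there is no serious obstacle; the only item requiring care is that the absolute constant $a$ must simultaneously satisfy $a < 1/2$ (so that the entropy bound applies uniformly in $m$) and $H(a) + 1/4 < 1$ (so that the union bound strictly beats $2^m$), both of which can be met by any sufficiently small $a$. No separate treatment of small $m$ is needed, because for $am < 1$ the Hamming ball in the counting step degenerates to the singleton $\{\sigma\}$ and the estimate $\abs{\sgn(\Gamma)} \leq 2^{m/4} < 2^m$ already provides a good $\eps^\ast$.
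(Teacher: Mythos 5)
Your proof is correct and follows essentially the same route as the paper's: a union bound over Hamming balls centered at the elements of $\sgn(\Gamma)$, combined with the coordinatewise comparison between the $\ell^1$-distance to $x$ and the Hamming distance to $\sgn(x)$. The only immaterial difference is that you control the cumulative binomial sum via the binary entropy bound, whereas the paper uses Hoeffding's inequality with an explicitly computed constant $a$.
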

The proof of the lemma is given in \cite{maiorov2010best}. 
However, in order to clarify the proof in \cite{maiorov2010best} and to keep the paper more self-contained, we include the proof in \Cref{sec:signproof}.

The remaining part of this subsection is dedicated to proving \Cref{lem:pi_small}. 
 The following lemma provides the central tool in order to obtain a bound on the cardinality of $\sgn(\Pi_{m,s,n,\ell,d})$.
 
\begin{lemma}[{cf. \cite[Lemma 3]{maiorov_best_1999}}]\label{lem:polynomial_manifold}
Let $m,s,N,K \in \NN$ be natural numbers such that $N+K\leq m/2.$ Moreover, let 
$\pi_{\alpha,\beta}(z)$ for $\alpha\in\{1,\ldots,m\}$ and $\beta\in\{1,\ldots,K\}$ be polynomials of degree at most
$s$ in $z\in\RR^N$, and set
\[
\pi_\alpha: \quad \RR^{K} \times \RR^N \to \RR, \quad \pi_\alpha(b,z)\defeq\sum_{\beta=1}^K b_\beta\pi_{\alpha,\beta}(z)
\qquad 
\text{for } \alpha \in \{1,\dots,m\}.
\]
Then, for 
\begin{align*}
\Pi^*_{m,s,N,K}\defeq\Bigl\{(\pi_1(b,z),\ldots,\pi_m(b,z)):(b,z)\in\RR^K\times\RR^N\Bigr\}\subseteq\RR^m,
\end{align*}
we have
\begin{align*}
\lvert\sgn(\Pi^*_{m,s,N,K})\rvert\leq(4s)^N(N+K+1)^{N+2}\biggl(\frac{2\ee m}{N+K}\biggr)^{N+K}.
\end{align*}
\end{lemma}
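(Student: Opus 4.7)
My plan is the following. The claim is a Warren-type bound on sign patterns, sharpened to exploit the bilinear structure of $\pi_\alpha(b,\sigma) = \sum_{\beta=1}^\theta b_\beta \pi_{\alpha\beta}(\sigma)$, which is linear in $b \in \RR^\theta$ and of degree $\leq s$ in $\sigma \in \RR^\lambda$. A direct application of the standard Warren--Milnor--Thom estimate to these polynomials, viewed as total-degree-$(s+1)$ polynomials in the $\lambda+\theta$ ambient variables, would yield a bound of order $(4e(s+1) m/(\lambda+\theta))^{\lambda+\theta}$, which is weaker than the target by essentially $(s+1)^\theta / s^\lambda$; the improvement must come from treating the linear $b$-variables separately from the degree-$s$ $\sigma$-variables.

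To separate the roles of $b$ and $\sigma$, I would factorize $|\sgn(\Pi^*_{m,s,\lambda,\theta})|$ as (number of sign patterns attainable for fixed $\sigma$ as $b$ varies) $\times$ (number of distinct sign-pattern functions $b \mapsto \sgn \pi(b,\sigma)$ produced as $\sigma$ varies). Writing $\pi_\alpha(b,\sigma) = \langle b, v_\alpha(\sigma)\rangle$ with $v_\alpha(\sigma) := (\pi_{\alpha\beta}(\sigma))_{\beta=1}^\theta \in \RR^\theta$, for each fixed $\sigma$ the sign pattern is determined by which chamber of the central hyperplane arrangement $\{b : \langle b, v_\alpha(\sigma)\rangle = 0\}_{\alpha=1}^m \subseteq \RR^\theta$ contains $b$. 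By Schl\"afli's formula for central arrangements, there are at most $2\sum_{i=0}^{\theta-1}\binom{m-1}{i}$ chambers, which under the hypothesis $\lambda+\theta \leq m/2$ absorbs into the factor $(2em/(\lambda+\theta))^{\lambda+\theta}$ of the target. Two values $\sigma, \sigma'$ yield the same sign-pattern function exactly when the oriented matroid of $\{v_\alpha(\sigma)\}_\alpha$ agrees with that at $\sigma'$; this oriented matroid is detected by the signs of the $\theta \times \theta$ minors of the matrix $V(\sigma) := (\pi_{\alpha\beta}(\sigma))_{\alpha,\beta}$, which are polynomials of degree $\leq s\theta$ in $\sigma \in \RR^\lambda$. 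A refined Warren-type count applied to this minor family should produce the remaining factor $(4s)^\lambda(\lambda+\theta+1)^{\lambda+2}$.

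The hard step will be executing this second count sharply: a naive application of Warren's theorem to the entire family of $\binom{m}{\theta}$ minors would introduce a spurious factor of $\binom{m}{\theta}^\lambda$, which is far too large. The key observation that saves the bound is that along any smooth curve in $\RR^\lambda$ the oriented matroid changes only at isolated parameter values, each corresponding to exactly one minor crossing zero --- so the total number of oriented-matroid types is governed by a one-parameter root count of degree $s\theta$ per curve, rather than by the raw number of minors. This refinement is the combinatorial core of Maiorov's original Lemma 3 in \cite{maiorov_best_1999}, and carrying it out with the exact constants $(4s)^\lambda$ and $(\lambda+\theta+1)^{\lambda+2}$ is the main technical obstacle. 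Once both factors are in place, combining them via the product bound $|\sgn(\Pi^*_{m,s,\lambda,\theta})| \leq (\text{chambers per fixed } \sigma) \cdot (\text{oriented-matroid types over } \sigma)$ yields the claimed estimate.
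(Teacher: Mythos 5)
The paper does not prove this lemma itself; it imports it verbatim from Maiorov's work, where the argument is a single Warren-type decomposition over the \emph{joint} parameter space $\RR^{\theta}\times\RR^{\lambda}$: the number of sign patterns is bounded by a Sauer--Shelah-type sum $\sum_{j\le\lambda+\theta}2^{j}\binom{m}{j}$ (this is where $(2\ee m/(\lambda+\theta))^{\lambda+\theta}$ comes from, using $\lambda+\theta\le m/2$) multiplied by the maximal number of connected components of an intersection of $j\le\lambda+\theta$ of the zero sets $\{\pi_\alpha=0\}$; that component count is done with a Milnor--Thom/B\'ezout estimate that tracks the multidegree $(1,s)$ in $(b,\sigma)$ separately, which is exactly why $s$ enters only with exponent $\lambda$ and why the factor $(4s)^{\lambda}(\lambda+\theta+1)^{\lambda+2}$ appears. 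Your route is genuinely different: a product bound of the form (chambers of the central arrangement $\{\langle b,v_\alpha(\sigma)\rangle=0\}$ for fixed $\sigma$) times (number of distinct arrangement types as $\sigma$ varies). The factorization itself is a valid upper-bound scheme and the Schl\"afli count for the first factor is fine, though even there the product $(\ee m/\theta)^{\theta}\cdot(\cdots/\lambda)^{\lambda}$ does not obviously collapse to $(2\ee m/(\lambda+\theta))^{\lambda+\theta}$ without a further (unverified) constant-chasing step.

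The genuine gap is the second factor, which is the entire content of the lemma on your route, and it is not established. You correctly observe that applying Warren to all $\binom{m}{\theta}$ minors of $V(\sigma)$ is fatal, but the proposed repair --- that along any curve the oriented matroid changes at finitely many points, each a root of a single degree-$s\theta$ polynomial --- does not bound the number of matroid types over a $\lambda$-dimensional parameter space. Knowing that the type changes at most $s\theta$ times along every line gives no global count of cells in $\RR^{\lambda}$ without a cell-decomposition or Warren argument applied to the full minor family, which reintroduces exactly the $\binom{m}{\theta}^{\lambda}$ blow-up you identified. (There is also a secondary issue: for degenerate configurations the set of achievable covectors is not read off from the chirotope signs alone, so "same minor signs" versus "same sign-pattern set" needs care.) As written, the proposal names the main technical obstacle but does not overcome it; to close the argument you should abandon the product decomposition and instead run Warren's proof once on $\RR^{\theta+\lambda}$, replacing the usual single-degree B\'ezout bound for the intersections $\{\pi_{\alpha_1}=\dots=\pi_{\alpha_j}=0\}$ by a bidegree-sensitive one that exploits linearity in $b$.
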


In order to show that \paul{the set }$\sgn(\Pi_{m,s,n,\ell,d})$ is ``small'' \paull{compared to $E^m$} we need to express the set $\Pi_{m,s,n,\ell,d}$ in a way that fits the setting of \Cref{lem:polynomial_manifold}. 
We start with a technical auxiliary statement.
It is well-known, but for the sake of completeness we include a short proof in \Cref{sec:proof_trig_power_reduction}.
\begin{lemma}\label{lem:trig_power_reduction}
Let $a,b \in \NN_0$. 
Then there exist coefficients $\alpha_0 = \alpha_0(a,b), \dots, \alpha_{a+b}=\alpha_{a+b}(a,b) \in \RR$ as well as $\beta_0 = \beta_0(a,b), \dots, \beta_{a+b}= \beta_{a+b}(a,b) \in \RR$
with
\begin{equation*}
\cos(\varphi)^a \sin(\varphi)^b = \sum_{h= 0}^{a+b} \left(\alpha_h \cos(h\varphi) + \beta_h \sin(h\varphi)\right), \quad \varphi \in \RR.
\end{equation*}
\end{lemma}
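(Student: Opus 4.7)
The plan is to use Euler's identities to rewrite each trigonometric factor as a complex exponential and then apply the binomial theorem. Writing $\cos(\varphi)=\tfrac{1}{2}(\ee^{\ii\varphi}+\ee^{-\ii\varphi})$ and $\sin(\varphi)=\tfrac{1}{2\ii}(\ee^{\ii\varphi}-\ee^{-\ii\varphi})$ gives
\begin{equation*}
\cos(\varphi)^{a}\sin(\varphi)^{b}
= \frac{1}{2^{a+b}\,\ii^{b}} \sum_{k=0}^{a}\sum_{j=0}^{b}\binom{a}{k}\binom{b}{j}(-1)^{j}\,\ee^{\ii(a+b-2k-2j)\varphi}.
\end{equation*}
Every exponent appearing on the right is an integer of absolute value at most $a+b$. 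Grouping the contributions with exponents $+h$ and $-h$ and using $\ee^{\pm \ii h\varphi}=\cos(h\varphi)\pm\ii\sin(h\varphi)$, one can rewrite the right-hand side as $\sum_{h=0}^{a+b}\bigl(\tilde{\alpha}_{h}\cos(h\varphi)+\tilde{\beta}_{h}\sin(h\varphi)\bigr)$ with certain complex coefficients $\tilde{\alpha}_{h},\tilde{\beta}_{h}\in\CC$ that depend only on $a$ and $b$.

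It then remains to verify that the coefficients can in fact be chosen to be real. This follows immediately from the fact that the left-hand side is real-valued for every $\varphi\in\RR$: taking real parts on both sides of the resulting identity and setting $\alpha_{h}\defeq\RE(\tilde{\alpha}_{h})$ and $\beta_{h}\defeq\RE(\tilde{\beta}_{h})$ yields the claim. I do not anticipate any genuine obstacle in this argument; the only mildly tedious part is the index bookkeeping $h=\lvert a+b-2k-2j\rvert$ in the binomial double sum, and one could equally well establish the identity by induction on $a+b$ using the elementary product-to-sum formulas for $\cos\cdot\cos$, $\sin\cdot\cos$, and $\sin\cdot\sin$.
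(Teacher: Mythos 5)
Your proof is correct. The Euler-identity computation is right: the exponents $a+b-2k-2j$ indeed all lie in $\{-(a+b),\dots,a+b\}$, regrouping $\ee^{\pm\ii h\varphi}$ into $\cos(h\varphi)\pm\ii\sin(h\varphi)$ is legitimate, and the final step of taking real parts to force $\alpha_h,\beta_h\in\RR$ is valid since the left-hand side and the functions $\cos(h\varphi),\sin(h\varphi)$ are all real-valued. (In fact the coefficients come out real automatically, because the terms with exponents $+h$ and $-h$ are complex conjugates of each other, but your real-part argument is a clean way to sidestep checking this.)

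The route is genuinely different from the paper's, though both are elementary. The paper instead cites handbook identities expressing $\cos(\varphi)^a$ as a cosine polynomial and $\sin(\varphi)^b$ as a cosine or sine polynomial depending on the parity of $b$, and then multiplies these out using the product-to-sum formula, which forces a case distinction on whether $b$ is even or odd. Your single binomial expansion in complex exponentials avoids both the external citation and the parity split, at the cost of slightly more index bookkeeping when collecting the exponents $\pm h$; it is arguably the more self-contained of the two arguments. Either version suffices for the way the lemma is used in \Cref{lem:inner_product_expansion}, since only the existence and the $(a,b)$-dependence of the coefficients matter there.
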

In the following, we show that the inner product of a fixed polynomial of degree at most $s$ and an arbitrary \paull{multivariate} ridge function can be expressed in a convenient way.
This is in fact a generalization of \cite[Theorem~3]{maiorov_best_1999} to the case of multivariate ridge functions. 
\begin{lemma}\label{lem:inner_product_expansion}
Let $s,d,\ell \in \NN$ with $\ell < d$ and set $\mu \defeq 2^\ell (s+d+1)^\ell$.
Given $P \in \polyy_s(\RR^d)$, there exist polynomials 
$Q_1(\bullet; P),\dots, Q_\mu(\bullet; P) \in \polyy_s(\RR^{d^2})$ 
(where we identify $\RR^{d \times d}$ with $\RR^{d^2}$) such that, 
given $A \in \RR^{\ell \times d}$ and $\sigma \in O(d)$ with $A\sigma = I_{\ell \times d}$ and $\profilefunction \in \mb(\RR^\ell)$, 
writing $\profilefunction_A(x) = \profilefunction(Ax)$, we have 
\begin{align*}
\langle \profilefunction_A, P \rangle = \sum_{h=1}^{\mu} b_h(\profilefunction) Q_h(\sigma; P),
\end{align*}
with coefficients $b_h(\profilefunction) \in \RR$ that only depend on $h$ and $\profilefunction$.
\end{lemma}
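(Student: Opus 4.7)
The plan is to peel away the $\sigma$-dependence by an orthogonal change of variables, use Tonelli to separate the $y'$ and $y''$ integrations, and then collapse the resulting doubly-indexed sum via a single binomial/multinomial expansion so that only $\binom{s+\ell}{\ell}$ functionals of $g$ remain, which matches the asserted bound $\mu = 2^\ell (s+d+1)^\ell$.

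First, I would substitute $x = \sigma y$ in $\langle g_A, P \rangle = \int_{B^d} g(Ax)\, P(x)\, \dd x$. Since $\sigma$ is orthogonal it preserves $B^d$ and Lebesgue measure, and the identity $A\sigma = I_{\ell \times d}$ forces $g(A\sigma y) = g(y')$, where $y = (y', y'') \in \RR^\ell \times \RR^{d-\ell}$. Expanding
$P(\sigma y) = \sum_{\kk \in \NN_0^d,\ \abs{\kk}\leq s} c_{\kk}(\sigma)\, y^{\kk}$
by distributing each monomial $(\sigma y)^{\boldsymbol{\alpha}} = \prod_i (\sum_j \sigma_{ij} y_j)^{\alpha_i}$ shows that every $c_{\kk}(\sigma)$ is a polynomial in the $d^2$ entries of $\sigma$ of total degree at most $s$. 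The inner product thus becomes $\sum_{\kk} c_{\kk}(\sigma) \int_{B^d} g(y')\, y^{\kk}\, \dd y$, with the $\sigma$-dependence now completely sequestered in the coefficients.

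Next I would split $\kk = (\kk', \kk'')$ and apply Tonelli. By symmetry of the ball $B^{d-\ell}$ the inner $y''$-integral vanishes unless every coordinate of $\kk''$ is even; writing $\kk'' = 2\mathbf{m}$, a scaling argument gives
\[
\int_{\mnorm{y''}_2 \leq \sqrt{1 - \mnorm{y'}_2^2}} (y'')^{2\mathbf{m}}\, \dd y'' \;=\; M_{\mathbf{m}} \cdot (1 - \mnorm{y'}_2^2)^{\abs{\mathbf{m}} + (d-\ell)/2},
\]
where $M_{\mathbf{m}} \defeq \int_{B^{d-\ell}} z^{2\mathbf{m}}\, \dd z$ depends only on $\mathbf{m}$. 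The crucial step is then to expand $(1 - \mnorm{y'}_2^2)^{\abs{\mathbf{m}}}$ via the binomial and multinomial theorems as a linear combination of $(y')^{2\mathbf{p}}$ with $\abs{\mathbf{p}} \leq \abs{\mathbf{m}}$, and to set $\boldsymbol{\tau} \defeq \kk' + 2\mathbf{p}$, so that $\abs{\boldsymbol{\tau}} \leq \abs{\kk'} + 2\abs{\mathbf{m}} = \abs{\kk} \leq s$. Re-indexing by $\boldsymbol{\tau}$ gives
\[
\langle g_A, P \rangle = \sum_{\boldsymbol{\tau} \in \NN_0^\ell,\ \abs{\boldsymbol{\tau}} \leq s} b_{\boldsymbol{\tau}}(g)\cdot q_{\boldsymbol{\tau}}(\sigma; P),
\]
with $b_{\boldsymbol{\tau}}(g) \defeq \int_{B^\ell} g(y')\,(y')^{\boldsymbol{\tau}} (1 - \mnorm{y'}_2^2)^{(d-\ell)/2}\, \dd y'$ depending only on $\boldsymbol{\tau}$ and $g$ (and finite, since $d > \ell$ makes the weight continuous and bounded on $B^\ell$ and $g \in \mb(\RR^\ell)$ is bounded on $B^\ell$), while $q_{\boldsymbol{\tau}}(\bullet; P) \in P_s(\RR^{d^2})$ absorbs all products $c_{\kk}(\sigma) M_{\mathbf{m}}$ together with the multinomial coefficients.

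Finally, a direct count shows that the number of multiindices $\boldsymbol{\tau} \in \NN_0^\ell$ with $\abs{\boldsymbol{\tau}} \leq s$ equals $\binom{s+\ell}{\ell} \leq (s+d+1)^\ell \leq \mu$, so after enumerating and padding with zero polynomials the representation with $\mu$ terms follows. The one place that requires care, rather than being a genuine obstacle, is the collapse step: what brings the count down from the naive $\binom{s+d}{d}$ to the desired $\binom{s+\ell}{\ell}$ is precisely the decision to keep the non-polynomial weight $(1 - \mnorm{y'}_2^2)^{(d-\ell)/2}$ inside $b_{\boldsymbol{\tau}}(g)$ (admissible because $(d-\ell)/2 > 0$) while expanding only the integer power $(1-\mnorm{y'}_2^2)^{\abs{\mathbf{m}}}$; once this allocation is fixed, the remainder of the argument is linear-algebraic bookkeeping.
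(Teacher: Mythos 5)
Your proof is correct, and it takes a genuinely different route from the paper's for the key step. The opening moves coincide: the orthogonal substitution $x = \sigma y$, the expansion $P(\sigma y) = \sum_{\abs{\kk}\le s} p_{\kk}(\sigma)\, y^{\kk}$ with $p_{\kk}(\bullet) \in P_s(\RR^{d^2})$, and the Tonelli reduction of the $y''$-integral to $q_{\kk}\cdot(1-\mnorm{y'}_2^2)^{(\abs{\kk''}+d-\ell)/2}$ are all present in the paper. The divergence is in how the remaining $y'$-integral is collapsed into a $\kk$-independent family of functionals of $g$: the paper passes to hyperspherical coordinates on $B^\ell$ (treating $\ell=1$ and $\ell\ge 2$ separately), reduces the resulting products of powers of sines and cosines to linear combinations of $\cos(h\varphi_k)$ and $\sin(h\varphi_k)$ via a trigonometric power-reduction lemma, and arrives at exactly $\mu = 2^\ell(s+d+1)^\ell$ functionals $b_h(g)$ given by integrals of $g$ in angular variables against fixed trigonometric products; this mirrors Maiorov's original univariate argument. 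You instead observe that the exponent splits as an integer part $\abs{\mathbf{m}}$ plus the fixed half-power $(d-\ell)/2$, expand only the integer part multinomially into monomials $(y')^{2\mathbf{p}}$, and absorb the residual weight $(1-\mnorm{y'}_2^2)^{(d-\ell)/2}$ into the functional, so that $b_{\boldsymbol{\tau}}(g)$ becomes a weighted moment of $g$ indexed by $\boldsymbol{\tau}=\kk'+2\mathbf{p}$ with $\abs{\boldsymbol{\tau}}\le s$. This yields only $\binom{s+\ell}{\ell}\le(s+d+1)^\ell\le\mu$ functionals (padding with zeros covers the rest), avoids the trigonometric machinery and the $\ell=1$ case distinction entirely, and the count is still $O(s^\ell)$, which is all the downstream sign-set argument needs. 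The two points requiring care — that the index $\boldsymbol{\tau}$ satisfies $\abs{\boldsymbol{\tau}}\le\abs{\kk}\le s$, and that $b_{\boldsymbol{\tau}}(g)$ is finite because the weight is bounded on $B^\ell$ for $d>\ell$ and $g\in\mb(\RR^\ell)$ — are both addressed. Your version is the more elementary and more uniform of the two.
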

\begin{proof}
Let $P \in \polyy_s(\RR^d)$ be an arbitrary polynomial of degree at most $s$ and let $\sigma \in \RR^{d \times d}$.
Reordering in terms of $y$, we can write
\begin{align}\label{eq:pk}
P(\sigma y) &= \underset{\abs{\kk} \leq s}{\sum}P_{\kk}(\sigma; P) y^{\kk} \quad \text{for every } y \in \RR^d,
\end{align}
where each function $\sigma \mapsto P_{\kk}(\sigma; P)$ is a polynomial of degree at most $s$ in the $d^2$ variables
$(\sigma_{i,j})_{i,j=1,\dots,d}$.
For a multiindex $\kk \in \NN_0^d$ with $\abs{\kk} \leq s$, we set
\begin{equation}\label{eq:qkdl}
q_{\kk,d,\ell} \defeq \frac{1}{\kk_{\ell+1} + \dots+ \kk_d + d - \ell} \cdot \int_{\SS^{d-\ell-1}} \xi_{\ell+1}^{\kk_{\ell+1}} \dots \xi_d^{\kk_d}\ \dd \mathcal{H}^{d-\ell-1}(\xi),
\end{equation}
where $\mathcal{H}^{d-\ell-1}$ denotes the $(d-\ell-1)$-dimensional Hausdorff measure. 
Let further $k < \ell$.
Using \Cref{lem:trig_power_reduction}, we can pick coefficients $\alpha_{h,\kk}^k, \alpha_{h,\kk}^\ell, \beta_{h,\kk}^k, \beta_{h,\kk}^\ell \in \RR$ that only depend on $h,d,\ell,k$ and $\kk$ such that 
\begin{align*}
\cos(\varphi)^{\kk_{k+1}}\sin(\varphi)^{k-1+\kk_1+\dots+\kk_k}&=\sum_{h=0}^{k-1+\kk_1+\dots+\kk_{k+1}}(\alpha_{h,\kk}^k\cos(h\varphi)+\beta_{h,\kk}^k\sin(h\varphi)) \quad \text{and}\\
\cos(\varphi)^{\kk_{\ell+1}+\dots+\kk_d+d-\ell+1}\sin(\varphi)^{\ell-1+\kk_1+\dots+\kk_\ell}
&=\sum_{h=0}^{d+\kk_1+\dots+\kk_d}(\alpha_{h,\kk}^\ell\cos(h\varphi)+\beta_{h,\kk}^\ell\sin(h\varphi))
\end{align*}
hold for every $\varphi \in \RR$.
Since $k<\ell < d$ and $\abs{\kk}\leq s$, we can add zeros to obtain 
\begin{align}
\cos(\varphi)^{\kk_{k+1}}\sin(\varphi)^{k-1+\kk_1+\dots+\kk_k}&=\sum_{h=0}^{d+s}(\alpha_{h,\kk}^k\cos(h\varphi)+\beta_{h,\kk}^k\sin(h\varphi)) \quad \text{and} \nonumber\\
\label{eq:addzeros}
\cos(\varphi)^{\kk_{\ell+1}+\dots+\kk_d+d-\ell+1}\sin(\varphi)^{\ell-1+\kk_1+\dots+\kk_\ell}
&=\sum_{h=0}^{d+s}(\alpha_{h,\kk}^\ell\cos(h\varphi)+\beta_{h,\kk}^\ell\sin(h\varphi))
\end{align}
for every $\varphi \in \RR$. 
Note that each sum $\sum_{h=0}^{d+s}(\alpha_{h,\kk}^k\cos(h\varphi)+\beta_{h,\kk}^k\sin(h\varphi))$ for $k \in \{1,\dots,\ell\}$ consists in total of $2(d+s+1)$ summands. 
Therefore, after rearranging, we can write 
\begin{equation}\label{eq:zetatilde}
\prod_{k=1}^\ell\sum_{h=0}^{d+s}(\alpha_{h,\kk}^k\cos(h\varphi_k)+\beta_{h,\kk}^k\sin(h\varphi_k)) 
= \sum_{h=1}^{2^\ell(d+s+1)^\ell}\tilde{\zeta}_{h,\kk}f_h(\varphi_1,\ldots,\varphi_\ell) \quad \text{for all } (\varphi_1, \dots, \varphi_\ell) \in \RR^\ell,
\end{equation}
where the $\tilde{\zeta}_{h, \kk} \in \RR$ are coefficients that depend only on $h,\kk,d,\ell$ and
each function $f_h$ is of the form 
\[
f_h(\varphi_1,\dots,\varphi_\ell) = \prod_{k=1}^\ell \left\{ \begin{matrix} \cos \\ \sin\end{matrix}\right\}_{h,k}(\tau_{h,k}\varphi_k)
\]
with $\tau_{h,k} \in \{0,1,\dots,d+s\}$ and $\left\{\begin{matrix} \cos \\ \sin\end{matrix}\right\}_{h,k} \in \{\cos, \sin\}$ for $h \in \{1,\dots, 2^\ell(d+s+1)^\ell\}$ and $k \in \{1,\dots,\ell\}$.
Note that $f_h$ only depends on $h$ (and $d,\ell$)
but \emph{not} on $\kk$.
We then set 
\begin{equation}\label{eq:zeta}
\zeta_{h,\kk} \defeq q_{\kk,d,\ell}\tilde{\zeta}_{h,\kk},
\end{equation}
 where $q_{\kk,d,\ell}$ is as defined in \eqref{eq:qkdl}.
Lastly, for $h \in \{1,\dots, 2^\ell(s+d+1)^\ell\}$ we define 
\begin{equation}\label{eq:qh}
Q_h(\sigma;P) \defeq \underset{\abs{\kk} \leq s}{\sum} \zeta_{h,\kk} \cdot P_{\kk}(\sigma;P),
\end{equation}
where the $P_\kk$ are as defined in \eqref{eq:pk}.
Since $P_\kk(\bullet; P) \in \polyy_s(\RR^{d^2})$ for every $\kk \in \NN_0^d$ with $\abs{\kk} \leq s$, we infer that the same holds for the functions $Q_h(\bullet;P)$.

We claim that the polynomials $Q_h(\bullet;P)$ have the property stated in the formulation of the \paull{lemma}.
To see this, let $\profilefunction \in \mb(\RR^\ell)$.
Fix a multiindex $\kk \in \NN_0^{d}$. 
By Fubini's theorem,
\begin{align*}
&\norel \int_{B^d} \profilefunction(y_1,\ldots, y_\ell) y^{\kk} \ \dd y \\
&= \int_{B^\ell} \profilefunction(y_1,\ldots, y_\ell) y_1^{\kk_1}\dots y_\ell^{\kk_\ell} 
\left(\int_{B_{y_1,\ldots,y_\ell}} y_{\ell+1}^{\kk_{\ell+1}} \dots y_d^{\kk_d} \ \dd(y_{\ell+1},\dots,y_d)\right) \ \dd(y_1,\ldots,y_\ell)
\end{align*}
with 
\begin{equation*}
B_{y_1,\ldots,y_\ell} \defeq \left\{ (y_{\ell+1},\dots,y_d) \in \RR^{d-\ell}: \ y_{\ell+1}^2 + \dots + y_d^{2} \leq 1 - y_1^2 -\dots- y_\ell^2\right\}.
\end{equation*}
We first study the inner integral: Transforming to polar coordinates (see, e.g., \cite[p.~118]{evans_measure_1992}), we get 
\begin{align*}
&\norel \int_{B_{y_1,\ldots,y_\ell}} y_{\ell+1}^{\kk_{\ell+1}} \dots y_d^{\kk_d} \ \dd(y_{\ell+1},\dots,y_d) \\
&= \int_0^{\sqrt{1-y_1^2 -\dots- y_\ell^2}} r^{d-\ell-1}\cdot \left(\int_{\SS^{d-\ell-1}} (r\xi_{\ell+1})^{\kk_{\ell+1}} \dots (r\xi_d)^{\kk_d}\ \dd\mathcal{H}^{d-\ell-1}( \xi)\right) \ \dd r  \\
&= \int_0^{\sqrt{1-y_1^2 -\dots- y_\ell^2}} r^{\kk_{\ell+1} + \dots + \kk_d + d-\ell-1}\cdot \left(\int_{\SS^{d-\ell-1}} \xi_{\ell+1}^{\kk_{\ell+1}} \cdots \xi_d^{\kk_d}\ \dd\mathcal{H}^{d-\ell-1}( \xi)\right) \ \dd r  \\ 
&= q_{\kk, d,\ell} \cdot (1- y_1^2 -\dots- y_\ell^2)^{(\kk_{\ell+1} + \dots + \kk_d + d - \ell)/2},
\end{align*}
where $q_{\kk,d,\ell}$ is as defined in \eqref{eq:qkdl}.
This shows
\begin{align}
&\norel \int_{B^d} \profilefunction(y_1,\ldots, y_\ell) y^{\kk} \ \dd y \nonumber\\
\label{eq:pal_1}
&= q_{\kk, d,\ell} \cdot \int_{B^\ell}\profilefunction(y_1,\ldots, y_\ell)y_1^{\kk_1}\dots y_\ell^{\kk_\ell} (1- y_1^2 -\dots- y_\ell^2)^{(\kk_{\ell+1} + \dots + \kk_d + d - \ell)/2} \ \dd(y_1,\ldots, y_\ell).
\end{align}
Let us first assume $\ell \geq 2$.
We transform to hyperspherical coordinates (see for instance \cite{hyperspherical_coordinates}) in the last integrand. 
This transformation is given by (see also \cite[Definition~2]{hyperspherical_coordinates})
\begin{align*}
y_1 &= r \prod_{k=1}^{\ell - 1} \sin(\varphi_k), \\
y_2 &= r \cos(\varphi_1) \prod_{k=2}^{\ell - 1} \sin(\varphi_k), \\
y_3 &= r \cos(\varphi_2) \prod_{k=3}^{\ell - 1} \sin(\varphi_k), \\
&\vdots \\
y_{\ell - 1}&= r \cos(\varphi_{\ell - 2})\sin(\varphi_{\ell - 1})=r \cos(\varphi_{\ell - 2})\prod_{k = \ell - 1}^{\ell - 1} \sin(\varphi_k) \\
y_\ell &= r \cos(\varphi_{\ell - 1}) = r \cos(\varphi_{\ell - 1})\prod_{k = \ell }^{\ell - 1} \sin(\varphi_k),
\end{align*}
where $r \in [0,1], \varphi_1 \in [-\pi, \pi]$ and $\varphi_k \in [0,\pi]$ for $k \in \{2,\dots, \ell-1\}$.
\paull{One can show that $y_1^2 + \dots + y_\ell^2 = r^2$.}
The absolute value of the determinant of the Jacobian of that transformation is given by 
\[
r^{\ell - 1} \prod_{k=2}^{\ell - 1} \sin^{k-1}(\varphi_k); \quad \text{see \cite{hyperspherical_coordinates} for a proof.}
\]

Performing this transformation in the integral in \eqref{eq:pal_1} gives
\begin{align*}
&\norel 
\int_{B^\ell}\profilefunction(y_1,\ldots, y_\ell)y_1^{\kk_1}\dots y_\ell^{\kk_\ell} (1- y_1^2 -\dots- y_\ell^2)^{(\kk_{\ell+1} + \dots + \kk_d + d - \ell)/2} \ \dd(y_1,\ldots, y_\ell) \\
&=  \int_0^1 \!\!\! \int_{-\pi}^\pi \! \int_{[0,\pi]^{\ell-2}}\! \! \!  r^{\ell-1}\prod_{k=2}^{\ell-1}\sin^{k-1}(\varphi_k)\cdot 
\profilefunction\!\left(\!r\prod_{k=1}^{\ell-1}\!\sin(\varphi_k), r\cos(\varphi_1)\prod_{k=2}^{\ell-1}\!\sin(\varphi_k),\ldots,r\cos(\varphi_{\ell-1})\prod_{k=\ell}^{\ell-1}\!\sin(\varphi_k)\!\!\right) \\
&\norel \left(r\prod_{k=1}^{\ell-1}\sin(\varphi_k)\right)^{\kk_1}\left(r\cos(\varphi_1)\prod_{k=2}^{\ell-1}\sin(\varphi_k)\right)^{\kk_2}\dots\left(r\cos(\varphi_{\ell-1})\prod_{k=\ell}^{\ell-1}\sin(\varphi_k)\right)^{\kk_\ell}\\
&\norel(1-r^2)^{(\kk_{\ell+1} + \dots + \kk_d + d - \ell)/2} 
 \dd(\varphi_2,\ldots,\varphi_{\ell-1}) \dd \varphi_1  \dd r.
\end{align*}
Note that in the case $\ell =2$, the innermost integral (over $[0,\pi]^{\ell-2}$) has to be omitted. 
We now additionally apply the substitution $r = \sin(\varphi_\ell)$ \paull{(with $\varphi_\ell \in \left[0, \frac{\pi}{2}\right]$)} and get 
\begin{align*}
&\norel \int_{B^\ell}\profilefunction(y_1,\ldots, y_\ell)y_1^{\kk_1}\dots y_\ell^{\kk_\ell} (1- y_1^2 -\dots- y_\ell^2)^{(\kk_{\ell+1} + \dots + \kk_d + d - \ell)/2} \ \dd(y_1,\ldots, y_\ell) \\
&= \int_0^{\pi/2} \! \int_{-\pi}^\pi \int_{[0,\pi]^{\ell-2}} \prod_{k=1}^\ell\sin^{k-1}(\varphi_k)\cdot 
\profilefunction\left(\prod_{k=1}^\ell\sin(\varphi_k), \cos(\varphi_1)\prod_{k=2}^\ell\sin(\varphi_k),\ldots,\cos(\varphi_{\ell-1})\prod_{k=\ell}^\ell\sin(\varphi_k)\right) \\
&\norel \left(\prod_{k=1}^\ell\sin(\varphi_k)\right)^{\kk_1}\left(\cos(\varphi_1)\prod_{k=2}^\ell\sin(\varphi_k)\right)^{\kk_2}\dots\left(\cos(\varphi_{\ell-1})\prod_{k=\ell}^\ell\sin(\varphi_k)\right)^{\kk_\ell}\cos(\varphi_\ell)^{\kk_{\ell+1} + \dots + \kk_d + d - \ell+1} \\
&\norel \dd(\varphi_2,\ldots,\varphi_{\ell-1}) \dd \varphi_1  \dd \varphi_\ell \\
&=  \int_0^{\pi/2} \! \int_{-\pi}^\pi \int_{[0,\pi]^{\ell-2}} \left( \prod_{k = 1}^{\ell - 1} \cos(\varphi_k)^{\kk_{k+1}}\sin(\varphi_k)^{k- 1 + \kk_1 + \dots + \kk_k}\right) 
\cdot \cos(\varphi_\ell)^{\kk_{\ell + 1} + \dots +\kk_d + d - \ell + 1} \\
&\norel \sin(\varphi_\ell)^{\ell - 1 + \kk_1 + \dots + \kk_\ell} \cdot 
\profilefunction\left(\prod_{k=1}^\ell\sin(\varphi_k), \cos(\varphi_1)\prod_{k=2}^\ell\sin(\varphi_k),\ldots,\cos(\varphi_{\ell-1})\sin(\varphi_\ell)\right)  \\
&\norel\dd(\varphi_2,\ldots,\varphi_{\ell-1})\dd \varphi_1  \dd \varphi_\ell.
\end{align*}
Using \eqref{eq:pal_1} and recalling our choice of $\alpha_{h,\kk}^k, \alpha_{h,\kk}^\ell, \beta_{h,\kk}^k$ and $\beta_{h,\kk}^\ell$ in \eqref{eq:addzeros} as 
well as our choice of $\tilde{\zeta}_{h,\kk}$ and $\zeta_{h,\kk}$ in \eqref{eq:zetatilde} and \eqref{eq:zeta}, we obtain 
\begin{align}
&\norel \int_{B^d} \profilefunction(y_1,\ldots, y_\ell) y^{\kk} \ \dd y \nonumber\\
&= q_{\kk, d,\ell}\int_0^{\pi/2} \! \int_{-\pi}^\pi \int_{[0,\pi]^{\ell-2}}\prod_{k=1}^\ell\sum_{h=0}^{d+s}
(\alpha_{h,\kk}^k\cos(h\varphi_k)+\beta_{h,\kk}^k\sin(h\varphi_k)) \nonumber\\
&\norel \profilefunction\left(\prod_{k=1}^\ell\sin(\varphi_k), \cos(\varphi_1)\prod_{k=2}^\ell\sin(\varphi_k),\ldots,\cos(\varphi_{\ell-1})
\prod_{k=\ell}^\ell\sin(\varphi_k)\right)\ \dd(\varphi_2,\ldots,\varphi_{\ell-1}) \dd \varphi_1  \dd \varphi_\ell \nonumber\\
&= q_{\kk, d,\ell}\int_0^{\pi/2} \! \int_{-\pi}^\pi \int_{[0,\pi]^{\ell-2}}\sum_{h=1}^{2^\ell(d+s+1)^\ell}\tilde{\zeta}_{h,\kk}f_h(\varphi_1,\ldots,\varphi_\ell) \nonumber\\
&\norel \profilefunction\left(\prod_{k=1}^\ell\sin(\varphi_k), \cos(\varphi_1)\prod_{k=2}^\ell\sin(\varphi_k),\ldots,\cos(\varphi_{\ell-1})\prod_{k=\ell}^\ell\sin(\varphi_k)\right)\ \dd(\varphi_2,\ldots,\varphi_{\ell-1}) \dd \varphi_1  \dd \varphi_\ell \nonumber\\
&=\sum_{h=1}^{2^\ell(d+s+1)^\ell}\zeta_{h,\kk}\int_0^{\pi/2} \! \int_{-\pi}^\pi \int_{[0,\pi]^{\ell-2}}
f_h(\varphi_1, \dots, \varphi_\ell) \nonumber\\
&\norel \hspace{1.5cm}\profilefunction\left(\prod_{k=1}^\ell\sin(\varphi_k), \cos(\varphi_1)\prod_{k=2}^\ell\sin(\varphi_k),\ldots,\cos(\varphi_{\ell-1})\prod_{k=\ell}^\ell\sin(\varphi_k)\right) \dd(\varphi_2,\ldots,\varphi_{\ell-1}) \dd \varphi_1  \dd \varphi_\ell \nonumber\\
\label{eq:lgreater1}
&= \sum_{h=1}^{2^\ell(d+s+1)^\ell}\zeta_{h,\kk} \cdot b_h(\profilefunction),
\end{align}
where 
\begin{align*}
&\norel b_{h}(\profilefunction)  \\
&\defeq  \int_0^{\pi/2} \! \int_{-\pi}^\pi \int_{[0,\pi]^{\ell-2}}
f_h(\varphi_1, \dots, \varphi_\ell) 
\profilefunction\left(\prod_{k=1}^\ell\sin(\varphi_k), \cos(\varphi_1)\prod_{k=2}^\ell\sin(\varphi_k),\ldots,\cos(\varphi_{\ell-1})\prod_{k=\ell}^\ell\sin(\varphi_k)\right)\\
&\norel \dd(\varphi_2,\dots,\varphi_{\ell-1}) \dd \varphi_1 \dd \varphi_\ell,
\end{align*}
which only depends on $h$ and $\profilefunction$.

In the case $\ell = 1$, the integral in \eqref{eq:pal_1} evaluates to
\begin{align}
\int_{B^d} g(y_1) y^{\kk} \ \dd y &= q_{\kk,d,1} \cdot \int_{-1}^{1} \profilefunction(y_1)y_1^{\kk_1}(1-y_1^2)^{(\kk_2+\dots+\kk_d + d - 1)/2} \ \dd y_1  \nonumber\\
\overset{y_1 = \sin(\varphi)}&{=} q_{\kk,d,1} \cdot\int_{-\pi/2}^{\pi/2} \profilefunction(\sin(\varphi)) \sin^{\kk_1}(\varphi) \cos(\varphi)^{\kk_2 + \dots + \kk_d + d} \ \dd \varphi \nonumber\\
\overset{\eqref{eq:addzeros}}&{=} q_{\kk,d,1} \cdot\int_{-\pi/2}^{\pi/2} \profilefunction(\sin(\varphi)) \sum_{h=0}^{d+s} \left(\alpha_{h,\kk}^1\cos(h \varphi) + \beta_{h,\kk}^1 \sin(h\varphi)\right) \ \dd \varphi \nonumber\\
\overset{\eqref{eq:zetatilde}}&{=} q_{\kk,d,1} \cdot\sum_{h=1}^{2(d+s+1)} \tilde{\zeta}_{h, \kk} \cdot \int_{-\pi/2}^{\pi/2} \profilefunction(\sin(\varphi)) \cdot f_h(\varphi) \ \dd \varphi \nonumber\\
\label{eq:l1}
\overset{\eqref{eq:zeta}}&{=} \sum_{h=1}^{2(d+s+1)} \zeta_{h,\kk} \cdot b_h(\profilefunction),
\end{align}
with 
\[
b_h(\profilefunction) \defeq \int_{-\pi/2}^{\pi/2} \profilefunction(\sin(\varphi)) \cdot f_h(\varphi) \ \dd \varphi,
\]
which only depends on $h$ and $\profilefunction$.

Finally, let $A \in \RR^{\ell \times d}$ and $\sigma \in O(d)$ with $A \sigma = I_{\ell \times d}$.
Putting everything together, we get
\begin{align*}
\langle \profilefunction_A, P \rangle &= \int_{B^d} \profilefunction(Ax)P(x)\  \dd x \\
\overset{x=\sigma y}&{ = }\int_{B^d} \profilefunction(y_1,\ldots, y_\ell) P(\sigma y) \ \dd y \\
\overset{\eqref{eq:pk}}&{=}\underset{\abs{\kk}\leq s}{\sum}P_{\kk}(\sigma; P)\int_{B^d} \profilefunction(y_1, \dots, y_\ell) y^{\kk} \ \dd y \\
\overset{\eqref{eq:lgreater1},\eqref{eq:l1}}&{=} \underset{\abs{\kk}\leq s}{\sum}P_{\kk}(\sigma; P) \left(\sum_{h=1}^{2^\ell(s + d + 1)^\ell} \zeta_{h,\kk} \cdot b_h(\profilefunction)\right) \\
&= \sum_{h=1}^{2^\ell(s + d + 1)^\ell} \Bigg( b_h(\profilefunction) \cdot 
\underbrace{\underset{\abs{\kk} \leq s}{\sum} \zeta_{h,\kk}\cdot P_{\kk}(\sigma; P)}_{= Q_h(\sigma;P); \text{ see } \eqref{eq:qh}}\Bigg),
\end{align*}
which concludes the proof.
\end{proof}
We are now ready to apply Lemma \ref{lem:polynomial_manifold} and prove \Cref{lem:pi_small}.
\begin{proof}[Proof of \Cref{lem:pi_small}]
Let $\prs$ be the quasi-projection onto $\polyy_s(B^d)$ with range in $\polyy_{2s-1}(B^d)$ as defined in \Cref{sec:ortho}.
Recall that, according to \Cref{eq:projiden}, we have 
\begin{equation}\label{eq:exp}
\prs (f) = \underset{i \in I_{2s-1}}{\sum} a_{i,s} \cdot \langle f, P_i \rangle  P_i \quad \text{for all } f \in L^1(B^d)
\end{equation}
with real coefficients $a_{i,s} \in \RR$ and polynomials $P_i \in \polyy_{2s-1}(B^d)$ \paull{($i \in I_{2s-1}$)} that form an orthonormal basis of $\polyy_{2s-1}(B^d)$.

We now claim that there exist $\{\gamma_{i}^{j}\}_{(i, j) \in I_{2s-1} \times I_{2s-1}}\subset \polyy_{2s-1}(\RR^d)$ satisfying
\begin{align}\label{eq:gammaij}
a_{i,s} \cdot P_{i}(\xi+t)=\sum_{j\in I_{2s-1}}\gamma_{i}^{j}(t)P_{j}(\xi)
\end{align}
for all $i \in I_{2s-1}$ and $\xi\in B^d, \ t \in \RR^d$. 
Indeed, for fixed $t \in \RR^d$ and $i \in I_{2s-1}$, the function defined by $g_t(\xi) \defeq a_{i,s} \cdot P_i(\xi + t)$
is a polynomial of degree at most $2s-1$ in the variable $\xi$.
Since $\{P_j\}_{j \in I_{2s-1}}$ is an orthonormal basis for $\polyy_{2s-1}(B^d)$, we thus get 
\[
g_t(\xi)= \underset{j \in I_{2s-1}}{\sum} \underbrace{\langle g_t, P_j \rangle}_{=: \gamma_i^j(t)} P_j(\xi) \quad \text{for all } \xi \in B^d.
\]
Note that
\[
\gamma_i^j(t) = a_{i,s} \cdot \int_{B^d} P_i(\xi + t)P_j(\xi) \ \dd \xi
\]
are polynomials of degree at most $2s-1$ in $t$.

Let $\xi_1, \dots, \xi_m \in B^d$ be taken as in the beginning of \Cref{sec:signset}.
Set $K \defeq n2^\ell(2s+d)^\ell$ and let 
\begin{equation}\label{eq:bij}
\psi = (\psi_1, \psi_2): \quad \{1,\dots, K\} \to \{1,\dots,n\} \times \{1,\dots,2^{\ell}(2s+d)^\ell\}
\end{equation}
be a bijection.
Further, for $i \in I_{2s-1}$ and $h \in \{1,\dots, 2^{\ell}(2s+d)^\ell\}$, let $Q_h(\bullet; P_i) \in \polyy_{2s-1}(\RR^{d^2})$ be given according to \Cref{lem:inner_product_expansion}.
For $\alpha \in \{1,\dots,m\}$ and $\beta \in \{1,\dots,K\}$, we then set 
\begin{equation}\label{eq:pialphabeta}
\pi_{\alpha,\beta}(\sigma_1,\ldots,\sigma_n,t)\defeq \sum_{i\in I_{2s-1}}\sum_{j\in I_{2s-1}}Q_{\psi_2(\beta)}(\sigma_{\psi_1(\beta)};P_i)\gamma_{i}^{j}(t)P_{j}(\xi_\alpha),
\end{equation}
for $\sigma_1,\dots,\sigma_n \in \RR^{d \times d} \cong \RR^{d^2}$ and $t \in \RR^d$.
By letting $N \defeq nd^2 +d$, we note that $\pi_{\alpha,\beta} \in \polyy_{4s-2}(\RR^N)$.
We claim that 
\begin{equation}\label{eq:sub}
\Pi_{m,s,n,\ell,d} \subseteq \Bigl\{(\pi_1(b,z),\ldots,\pi_m(b,z)):(b,z)\in\RR^K\times\RR^N\Bigr\}
\end{equation}
with $\pi_{\alpha}(b,z) \defeq \sum_{\beta = 1}^K b_\beta \cdot \pi_{\alpha, \beta}(z)$.

To this end, recall \paull{from \eqref{eq:def_pi}} that 
\[
\Pi_{m,s,n,\ell,d} = \Bigl\{(P(\xi_1+t),\ldots,P(\xi_m+t)): \ P\in\Pr\nolimits_s (\rndl),\ t\in\RR^d\Bigr\}.
\]
Let $P \in \prs(\rndl)$.
From \eqref{eq:exp}, we have
\[
P = \sum_{i \in I_{2s-1}} a_{i,s} \cdot \langle R, P_i \rangle  P_i,
\] 
where $R \in \rndl$.
According to \Cref{prop:orthrows}, we can write $R = \sum_{k=1}^n \profilefunction_k(A_k \cdot \bullet)$ with $\profilefunction_k \in \mb(\RR^\ell)$ and $A_kA_k^T = I_{\ell \times \ell}$
for every $k \in \{1,\dots,n\}$, which implies for each $k$ the existence of a matrix $\sigma_k \in O(d)$ with $A_k \sigma_k = I_{\ell \times d}$. 
Thus, Lemma \ref{lem:inner_product_expansion} yields that
\begin{align*}
P=\sum_{i\in I_{2s-1}}\sum_{k=1}^n a_{i,s} \cdot \langle \profilefunction_k(A_k\bullet),P_{i}\rangle P_{i}
=\sum_{i\in I_{2s-1}}\sum_{k=1}^n\sum_{h=1}^{2^\ell(2s+d)^\ell}a_{i,s} \cdot b_h(\profilefunction_k)Q_h(\sigma_k;P_i)P_i
\end{align*}
with coefficients $b_h(\profilefunction_k) \in \RR$ that only depend on $\profilefunction_k$ and $h$.

Recalling the choice of $\gamma_i^j(t)$ in \eqref{eq:gammaij}, the choice of $\psi$ in \eqref{eq:bij}, and the definition of $\pi_{\alpha, \beta}$
in \eqref{eq:pialphabeta} then yields for every $\alpha \in \{1,\dots,m\}$ and $t \in \RR^d$ that 
\begin{align*}
P(\xi_\alpha+t)&=\sum_{i\in I_{2s-1}}\sum_{k=1}^n\sum_{h=1}^{2^\ell(2s+d)^\ell}a_{i,s} \cdot b_h(\profilefunction_k)Q_h(\sigma_k;P_i)P_i(\xi_\alpha + t)\\
&=\sum_{i\in I_{2s-1}}\sum_{k=1}^n\sum_{h=1}^{2^\ell(2s+d)^\ell}\sum_{j\in I_{2s-1}}b_h(\profilefunction_k)Q_h(\sigma_k;P_{i})\gamma_{i}^{j}(t)P_{j}(\xi_\alpha)\\
&=\sum_{k=1}^n\sum_{h=1}^{2^\ell(2s+d)^\ell} b_h(\profilefunction_k)\sum_{i\in I_{2s-1}}\sum_{j\in I_{2s-1}}Q_h(\sigma_k;P_{i})\gamma_{i}^{j}(t)P_{j}(\xi_\alpha) \\
&= \sum_{\beta = 1}^{K} b_{\psi_2(\beta)}(\profilefunction_{\psi_1(\beta)}) \pi_{\alpha, \beta} (\sigma_1, \dots, \sigma_n,t).
\end{align*}
This proves \eqref{eq:sub}.

To apply \Cref{lem:polynomial_manifold}, we need $N+K\leq m/2$ to be satisfied. 
Once we show this, by substituting the values of $N$ and $K$ and applying \Cref{lem:polynomial_manifold} with $4s - 2$ in place of $s$, we would obtain
\begin{align*}
\lvert\sgn(\Pi_{m,s,n,\ell,d})\rvert&\leq(16s-8)^N(N+K+1)^{N+2}\biggl(\frac{2\ee m}{N+K}\biggr)^{N+K} \\
&\leq(16s)^N(N+K+1)^{N+2}\biggl(\frac{2\ee m}{N+K}\biggr)^{N+K}\leq 2^{m/4}
\end{align*}
if $N\lg(16s)+(N+2)\lg(N+K+1)+(N+K)\lg(2 \ee m / (N + K))\leq m/4.$ 
Here, we write $\lg$ for the logarithm with respect to base $2$.
Therefore, if we can prove that 
\begin{equation}\label{eq:main_goal}
N+K\leq m/2 \quad
\text{and} \quad
N\lg(16s)+(N+2)\lg(N+K+1)+(N+K)\lg(2\ee m/(N + K))\leq m/4
 \end{equation}
\paull{for sufficiently large $s \in \NN$ (depending on $d,\ell,c_1$),} then we are done.

Note that $c_0 > 2 \cdot 4^{\ell}/c_1$, which directly implies $c_1/2 > 4^{\ell}/c_0$.
Since it suffices to compare leading coefficients \paul{(with respect to $s$)}, we hence get from \eqref{eq:cond} that 
\begin{equation}\label{eq:m/2bound}
m/2 \geq c_1 s^d/2>(s^{d-\ell}/c_0)d^2+d+(s^{d-\ell}/c_0)2^\ell(2s+d)^\ell \paull{\geq nd^2 + d + n  2^\ell(2s+d)^\ell} = N + K
\end{equation}
for sufficiently large $s \in \NN$ (depending on $d, \ell, c_1$).

To show that $N\lg(16s)+(N+2)\lg(N+K+1)+(N+K)\lg(2\ee m/(N + K))\leq m/4$
for sufficiently large $s$ (depending on $d, \ell, c_1$), we start by showing
\begin{equation}\label{eq:goal}
K \lg(2 \ee m / (N + K)) \leq \tilde{c} \cdot m \quad \text{with } \tilde{c}< \frac{1}{4}
\end{equation}
for sufficiently large $s$ (depending on $d,\ell,c_1$).
\paull{Indeed, by \eqref{eq:cond}, we get}
\begin{align*}
K\lg\biggl(\frac{2 \ee m}{N+K}\biggr)\leq(s^{d-\ell}/c_0)2^\ell(2s+d)^\ell\lg\biggl(\frac{2\ee (2c_1 s^d)}
{\displaystyle(s^{d-\ell}/(2c_0))d^2+d+(s^{d-\ell}/(2c_0))2^\ell(2s+d)^\ell}\biggr).
\end{align*}
We first study the argument of the logarithm. 
By comparing leading coefficients, we note 
\[
\frac{2e(2c_1 s^d)}{\displaystyle(s^{d-\ell}/(2c_0))d^2+d+(s^{d-\ell}/(2c_0))2^\ell(2s+d)^\ell} \quad \to \quad \frac{4\ee c_1 }{4^{\ell}/(2c_0)}
= \frac{2\ee c_1 c_0}{4^{\ell - 1}} \quad (s \to \infty),
\]
which shows that 
\[
\frac{2\ee(2c_1 s^d)}{\displaystyle(s^{d-\ell}/(2c_0))d^2+d+(s^{d-\ell}/(2c_0))2^\ell(2s+d)^\ell}
\leq \frac{\ee c_1 c_0}{4^{\ell - 2}}
\]
for sufficiently large $s$ (depending on $d,\ell,c_1$).
Recalling that $m \geq c_1s^d$ and by comparing leading coefficients, it suffices to ensure
\[
c_1 / 4 > \frac{4^\ell}{c_0} \lg (\ee c_1 c_0 / 4^{\ell - 2})
\]
in order for \eqref{eq:goal} to be satisfied. 
This is equivalent to 
\[
c_0  > \frac{4^{\ell + 1}}{c_1} \lg (\ee c_1 c_0 / 4^{\ell - 2}).
\]
Plugging in $c_0 =  4^{\ell + 3}/ c_1$, we obtain that this is equivalent to 
\[
4^2 > \lg (\ee \cdot 4^5),
\]
which is satisfied since $\lg(\ee \cdot 4^5) = \lg(\ee) + 10 \leq 2 + 10 = 12 < 16 = 4^2$.
Hence, \eqref{eq:goal} is satisfied \paull{for $s$ sufficiently large (depending on $d,\ell,c_1$)}.

Further,  
we note by \eqref{eq:cond} that  
\begin{equation}\label{eq:order1}
N \lg(16s) \leq \left(\frac{d^2}{c_0} s^{d-\ell}+d \right) \lg(16s) \ll s^d,
\end{equation}
i.e., this term is of lower order than $s^d$.
Additionally, we get by \eqref{eq:cond} that 
\begin{equation}\label{eq:order2}
(N+2)\lg(N+K+1) 
\leq \left(\frac{d^2}{c_0} s^{d-\ell} +d+ 2\right) \lg \left(\frac{d^2}{c_0} s^{d-\ell} + d + \frac{2^\ell}{c_0}s^{d-\ell}(2s+d)^\ell \ + 1\right) \ll s^d.
\end{equation}
Lastly, we observe again by \eqref{eq:cond} that 
\begin{equation}\label{eq:order3}
N \lg(2 \ee m /(N + K)) \leq N \lg(2 \ee m) \leq \left(\frac{d^2}{c_0} + d \right) \cdot s^{d-\ell} \cdot \lg(4 \ee c_1 s^d ) \ll s^d.
\end{equation}
\paull{Because of \eqref{eq:cond}}, Equations \eqref{eq:m/2bound}, \eqref{eq:goal}, \eqref{eq:order1}, \eqref{eq:order2} and \eqref{eq:order3} together imply \eqref{eq:main_goal}, which proves the lemma. 
\end{proof}

\subsection{Concluding the proof of the lower bound}\label{sec:lower_final}
In this subsection, we prove \Cref{thm:main_1}. 
To this end, we start by constructing a subset of $\sobb$ consisting of \paull{sums of} smooth bump functions, which we will later show to realize the desired lower bound. 

 Let $d,r \in \NN$ be fixed and set 
\begin{equation}\label{eq:defomega}
\Omega^d \defeq \left[-\frac{1}{\sqrt{d}}, \frac{1}{\sqrt{d}}\right]^d \subseteq B^d.
\end{equation}
Let $\tilde{\omega}: \ \RR^d \to [0,1]$ be a smooth function with
\[
\tilde{\omega}(x) = \begin{cases} 1,& x \in \Omega^d/2, \\ 0,& x \in \RR^d \setminus \Omega^d.\end{cases}
\]
We scale $\tilde{\omega}$ such that its restriction to $B^d$ belongs to $\sobb$ and call this normalization $\omega$.
Let $\vartheta$ and $\xi_1, \dots, \xi_m$ be chosen as at the beginning of \Cref{sec:signset}.
Recall that
\[
E^m = \{ \eps = (\eps_1 ,\dots, \eps_m): \ \eps_i = \pm 1, \ i = 1,\dots,m\}.
\]
Let 
\[
f_\eps: \quad B^d \to \RR, \quad  f_\eps(x) \defeq (2\vartheta)^{-r}\sum_{i=1}^m \eps_i \cdot \omega(2\vartheta(x- \xi_i))
\]
for $\eps \in E^m$ and define
\begin{equation}\label{eq:fmrd}
\mathcal{F}^{m,r,d} \defeq \left\{f_\eps: \ \eps \in E^m\right\}.
\end{equation}
\begin{proposition}\label{prop:fmsubset}
For any choice of $m,r,d \in \NN$ it holds that $\mathcal{F}^{m,r,d} \subseteq \sobb$.
\end{proposition}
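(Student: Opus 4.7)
The plan is to show that the translated, rescaled copies of $\omega$ appearing in $f_\eps$ have essentially disjoint supports, so that all relevant sup-norms can be controlled by those of a single copy of $\omega$.

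First I would explicitly compute, for any multiindex $\kk \in \NN_0^d$ with $|\kk| \leq r$, the derivative
\[
\partial^{\kk} f_\eps(x) = (2\vartheta)^{|\kk|-r} \sum_{i=1}^m \eps_i \, (\partial^{\kk}\omega)\bigl(2\vartheta(x-\xi_i)\bigr),
\]
using the chain rule together with smoothness of $\omega$. Note that $f_\eps$ is smooth since it is a finite linear combination of smooth functions, so the weak derivatives coincide with classical ones.

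Next I would verify the key disjointness claim: the support of $x \mapsto \omega(2\vartheta(x-\xi_i))$ is contained in $\xi_i + \tfrac{1}{2\vartheta}\Omega^d$, which is a closed cube of side length $\tfrac{1}{\sqrt{d}\,\vartheta}$ centered at $\xi_i$. Since the $\xi_i$ come from the lattice with spacing exactly $\tfrac{1}{\sqrt{d}\,\vartheta}$, any two such cubes have disjoint interiors, and along the (measure-zero) common boundaries $\omega$ and all its derivatives vanish (because $\omega \equiv 0$ on the open complement of $\Omega^d$ and $\omega$ is smooth). Consequently, for every $x \in B^d$ at most one summand in the formula above is nonzero.

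From this disjointness, for each $\kk$ with $|\kk| \leq r$,
\[
\mnorm{\partial^{\kk} f_\eps}_{L^\infty(B^d)} \leq (2\vartheta)^{|\kk|-r} \cdot \mnorm{\partial^{\kk} \omega}_{L^\infty(\RR^d)} = (2\vartheta)^{|\kk|-r} \cdot \mnorm{\partial^{\kk} \omega}_{L^\infty(B^d)},
\]
where the last equality uses that $\omega$ is supported in $\Omega^d \subseteq B^d$. Since $\vartheta \geq 1$ and $|\kk| \leq r$, the prefactor $(2\vartheta)^{|\kk|-r}$ lies in $(0,1]$, and by construction $\mnorm{\partial^{\kk} \omega}_{L^\infty(B^d)} \leq \mnorm{\omega}_{W^{r,\infty}_d} \leq 1$. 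Taking the maximum over $|\kk| \leq r$ gives $\mnorm{f_\eps}_{W^{r,\infty}_d} \leq 1$, hence $f_\eps \in \sobb$, which concludes the proof.

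The only place requiring a little care is the disjoint-support step — specifically, checking the geometric fact that scaling the cube $\Omega^d$ by $\tfrac{1}{2\vartheta}$ matches the lattice spacing so the bumps tile without overlap, and handling the shared boundary faces by appealing to smoothness of $\omega$ across $\partial \Omega^d$. Once this is in place, the rest is a straightforward chain rule plus the fact that the normalization factor $(2\vartheta)^{-r}$ exactly compensates for the derivative-of-composition factor $(2\vartheta)^{|\kk|}$ in the worst case $|\kk|=r$.
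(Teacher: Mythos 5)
Your proof is correct and follows essentially the same route as the paper's: both arguments rest on the observation that the cubes $\xi_i + \Omega^d/(2\vartheta)$ have pairwise disjoint interiors (with $\omega$ and all its derivatives vanishing on the shared faces), so at most one bump contributes at each point, after which the chain-rule factor $(2\vartheta)^{|\kk|}$ is absorbed by the normalization $(2\vartheta)^{-r}$ since $2\vartheta \geq 1$ and $|\kk| \leq r$. No gaps.
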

\begin{proof}
Since $\omega$ is smooth, we conclude $\mathcal{F}^{m,r,d} \subseteq C^\infty(B^d)$. 
Let $\eps \in E^m$.
We want to show that $\abs{\partial^{\kk} f_\eps(x)} \leq 1$ for all $x \in B^d$ and every multiindex $\kk \in \NN_0^d$ with $\abs{\kk} \leq r$.
For $i \in \{1,\dots,m\}$ we let $\Omega_i \defeq \xi_i + \Omega^d/(2\vartheta)$ denote the closed cube with center $\xi_i$ and side length $1/(\vartheta\sqrt{d})$.
 Since $\partial \Omega_i$ is a null set for every $i$, and $f_\eps$ and all of its derivatives are continuous, 
 we may assume $x \in \left(B^d\right)^\circ \setminus \bigcup_{i=1}^m \partial \Omega_i$.
 Moreover, since $\infnorm{\xi_i - \xi_j} \geq \frac{1}{\sqrt{d}\vartheta}$ 
 for every $i,j \in \{1,\dots,m\}$ with $i \neq j$, we note that there exists at most one $i \in \{1,\dots,m\}$
 with $\infnorm{x- \xi_i} < \frac{1}{2\sqrt{d}\vartheta}$,
 which is equivalent to $x \in \Omega_i^\circ$.
 
 In the case such $i$ exists, we get $2\vartheta(x - \xi_j) \notin \Omega^d$ for every $j \neq i$, whence, by definition of $\omega$, we have
 \[
 f_\eps(x) = (2\vartheta)^{-r} \cdot \eps_i \cdot \omega(2\vartheta(x-\xi_i)). 
 \]
 Since $x \in \Omega_i^\circ$, this identity even holds in an open neighborhood of $x$.
 Therefore, for any multiindex $\kk \in \NN_0^d$ with $\abs{\kk} \leq r$, we have
 \[
 \partial^\kk f_\eps(x) = (2\vartheta)^{\abs{\kk} - r} \cdot   \eps_i \cdot \left(\partial^{\kk}\omega\right)(2\vartheta(x- \xi_i)).
 \]
Using that $2\vartheta \geq 1$ and $\omega \in \sobb$, we get
 \[
 \abs{ \partial^\kk f_\eps(x)} =  (2\vartheta)^{\abs{\kk} - r} \cdot \abs{\left(\partial^{\kk}\omega\right)(2\vartheta(x- \xi_i))} \leq 1.
 \]
 
In the case when $x \notin \bigcup_{i=1}^m \Omega_i$, we have $f_\eps \equiv 0$ in an open neighborhood of $x$.
Combining the two cases concludes the proof. 
\end{proof}

We can now establish a lower bound for the approximation of functions from $\mathcal{F}^{m,r,d}$ using functions from $\prs(\rndl)$ in the $\Vert \cdot \Vert_{L^1(B^d)}$-norm. 
This is in fact a generalization of \cite[Lemma~7]{maiorov2010best} to the case of multivariate ridge functions. 
\begin{proposition}\label{prop:low_bound_eps}
Let $d, \ell, r \in \NN$ with $\ell < d$. 
Then there exists a constant $c = c(d,\ell,r) > 0$ with the following property: For any $c_1> 0$ and $c_0 \defeq 4^{\ell + 3} /c_1$,
there exists a constant $C = C(d, \ell, c_1)$ such that for any choice of 
$m,s,n\in \NN$ with  
\[
c_1 s^d \leq m \leq 2c_1 s^d, \quad c_0 n \leq s^{d-\ell} \leq 2c_0 n, \quad \mathrm{and} \quad s \geq C,
\]
there exists a function $f_{\eps^\ast} \in \mathcal{F}^{m,r,d}$ satisfying 
\[
\underset{P \in \prs(\rndl)}{\inf} \mnorm{f_{\eps^\ast} - P}_{L^1(B^d)} \geq c \cdot  c_1^{(r/(d-\ell))-(r/d)} \cdot n^{-r/(d-\ell)}.
\]
Here, $\mathcal{F}^{m,r,d}$ is as defined in \eqref{eq:fmrd}.
\end{proposition}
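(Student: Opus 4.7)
The plan is to combine the cardinality bound from \Cref{lem:pi_small} with the distance estimate from \Cref{lem:epsbound} to pick a sign pattern $\eps^\ast$, build the bump function $f_{\eps^\ast} \in \mathcal{F}^{m,r,d}$, and then show this $f_{\eps^\ast}$ is far from every $P \in \prs(\rndl)$ in $L^1$. Concretely, since the hypotheses on $m,s,n,s$ are exactly those of \Cref{lem:pi_small}, that lemma delivers $|\sgn(\Pi_{m,s,n,\ell,d})|\leq 2^{m/4}$, and then \Cref{lem:epsbound} produces $\eps^\ast\in E^m$ with $\inf_{v\in\Pi_{m,s,n,\ell,d}}\|\eps^\ast - v\|_1 \geq a\,m$ for an absolute constant $a>0$. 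Let $\kappa_\omega \defeq \omega(0)>0$, which by construction of $\omega$ (it is a positive normalization of $\tilde\omega$, and $\tilde\omega\equiv 1$ on $\Omega^d/2$) equals the constant value of $\omega$ on $\Omega^d/2$.

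The key local reduction is as follows. Set $T\defeq\Omega^d/(4\vartheta)$. For any $t\in T$ and any $i$, $2\vartheta(\xi_i+t-\xi_i)=2\vartheta t\in\Omega^d/2$, while the separation $\|\xi_i-\xi_j\|_\infty\geq 1/(\vartheta\sqrt d)$ forces $2\vartheta(\xi_i+t-\xi_j)\notin\Omega^d$ for $j\neq i$. As shown in the proof of \Cref{prop:fmsubset}, $f_{\eps^\ast}$ therefore reduces on $\xi_i + T$ to a single summand, giving
\[
f_{\eps^\ast}(\xi_i+t)=(2\vartheta)^{-r}\kappa_\omega\,\eps_i^\ast\qquad \text{for all }t\in T,\;i\in\{1,\dots,m\}.
\]
Since the translated cubes $\xi_i+T$ are pairwise disjoint and contained in $B^d$, for any $P\in\prs(\rndl)$ Fubini yields
\begin{align*}
\mnorm{f_{\eps^\ast}-P}_{L^1(B^d)}
&\geq \sum_{i=1}^m\int_{\xi_i+T}\abs{f_{\eps^\ast}(x)-P(x)}\,\dd x
= \int_T \sum_{i=1}^m\abs{(2\vartheta)^{-r}\kappa_\omega\eps_i^\ast - P(\xi_i+t)}\,\dd t\\
&= (2\vartheta)^{-r}\kappa_\omega\int_T \mnorms{\eps^\ast - u(t)}_1\,\dd t,
\end{align*}
where $u_i(t)\defeq (2\vartheta)^{r}\kappa_\omega^{-1}P(\xi_i+t)$. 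The crucial observation is that $\prs(\rndl)$ is a positive cone (the quasi-projection $\prs$ is linear by \Cref{prop:prs}, and $\rndl$ is invariant under positive scalar multiplication), so $(2\vartheta)^{r}\kappa_\omega^{-1}P\in\prs(\rndl)$ and hence $u(t)\in\Pi_{m,s,n,\ell,d}$ for every $t$. The distance property of $\eps^\ast$ therefore yields $\|\eps^\ast-u(t)\|_1\geq am$ for all $t\in T$.

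Combining with $|T|=(2\vartheta\sqrt d)^{-d}$ and $m\leq 2c_1s^d$, $\vartheta \leq m^{1/d}\leq (2c_1)^{1/d}s$, we obtain
\[
\mnorm{f_{\eps^\ast}-P}_{L^1(B^d)}\geq (2\vartheta)^{-r}\kappa_\omega\cdot am\cdot (2\vartheta\sqrt d)^{-d}
\;\geq\; c'(d,\ell,r)\,c_1\,s^d\cdot \vartheta^{-d-r},
\]
and substituting $\vartheta\asymp c_1^{1/d}s$ collapses this to a constant (depending only on $d,\ell,r$) times $c_1^{-r/d}\,s^{-r}$. Finally, the hypothesis $s^{d-\ell}\leq 2c_0 n$ with $c_0=4^{\ell+3}/c_1$ gives $s^{-r}\geq (2c_0 n)^{-r/(d-\ell)} = (2\cdot 4^{\ell+3})^{-r/(d-\ell)}c_1^{r/(d-\ell)}n^{-r/(d-\ell)}$, producing exactly the asserted bound $c\cdot c_1^{r/(d-\ell)-r/d}\,n^{-r/(d-\ell)}$ with a constant $c=c(d,\ell,r)>0$.

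The main obstacle is the \emph{cone-scaling} step: one needs the fact that $\prs(\rndl)$ is stable under positive scalar multiplication, in order to transfer a lower bound against the \emph{unit-normalized} set $\Pi_{m,s,n,\ell,d}$ to an arbitrary $P$. Once this is recognized, the rest is bookkeeping: choosing $T$ so that the bumps are ``decoupled'' and $\omega$ is constantly $\kappa_\omega$ on each $2\vartheta(\xi_i+T-\xi_i)$, and propagating the two relations $m\asymp c_1 s^d$ and $s^{d-\ell}\asymp c_0 n$ through the final exponent arithmetic to recover the exponent $r/(d-\ell)-r/d$ of $c_1$.
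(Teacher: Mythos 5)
Your proposal is correct and follows essentially the same route as the paper's proof: both reduce to the cubes $\xi_i+\Omega^d/(4\vartheta)$ where $f_{\eps^\ast}$ is the constant $(2\vartheta)^{-r}\kappa_\omega\eps_i^\ast$, invoke the scaling invariance of $\prs(\rndl)$ to compare against the sign vector $\eps^\ast$ furnished by \Cref{lem:pi_small,lem:epsbound}, and run the same exponent arithmetic through $m\asymp c_1 s^d$ and $s^{d-\ell}\asymp c_0 n$. The only (cosmetic) difference is that you plug in the explicit rescaled vector $u(t)\in\Pi_{m,s,n,\ell,d}$ pointwise in $t$, whereas the paper takes an infimum over $(P',\tau)$ inside the integral before pulling out the scaling factor.
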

\begin{proof}
For a given $c_1 > 0$, we let $C = C(d,\ell, c_1)>0$ be chosen according to \Cref{lem:pi_small}.
Let $f_\eps \in \mathcal{F}^{m,r,d}$ and $P \in \prs(\rndl)$ be arbitrary. 
We recall that then 
\[
f_\eps(x) = (2\vartheta)^{-r} \sum_{i=1}^m \eps_i \cdot \omega(2\vartheta(x- \xi_i))
\]
 for given grid points $\xi_1,\dots,\xi_m \in \Omega^d$ with $\Omega^d$ as in \eqref{eq:defomega}.
 Moreover, recall from the start of \Cref{sec:signset} that $\vartheta \in \NN$ is chosen such that $\frac{m^{1/d}}{2} \leq \vartheta \leq m^{1/d}$.
With the notation $\Omega_i \defeq \xi_i + \Omega^d/(2\vartheta)$, we get $B^d \supseteq \bigcup_{i=1}^m \Omega_i^\circ$, where the union is disjoint. 
This yields
\[
\mnorm{f_\eps - P}_{L^1(B^d)} \geq \sum_{i=1}^m \int_{\Omega_i^\circ} \abs{f_\eps(x) - P(x)} \ \dd x =  \sum_{i=1}^m \int_{\Omega_i} \abs{f_\eps(x) - P(x)} \ \dd x,
\]
where in the last equality we used that $\partial \Omega_i$ is a measure zero set. 
Applying the change of variables $x \defeq t + \xi_i$, we conclude
\begin{align*}
\sum_{i=1}^m \int_{\Omega_i} \abs{f_\eps(x) - P(x)} \ \dd x 
&= \int_{\Omega^d/(2\vartheta)}\sum_{i=1}^m  \abs{f_\eps(t + \xi_i) - P(t+\xi_i)} \ \dd t \\
&\geq \int_{\Omega^d/(4\vartheta)}\sum_{i=1}^m  \abs{f_\eps(t + \xi_i) - P(t+\xi_i)} \ \dd t.
\end{align*}
For any $t\in \Omega^d/(4\vartheta)$ and fixed $i \in \{1,\dots,m\}$, since $\xi_i + t \in \Omega_i^\circ$ and $2\vartheta t \in \Omega^d/2$, it follows from the 
choice of $\omega$ that
\begin{align*}
f_\eps(\xi_i + t) = (2\vartheta)^{-r}\cdot \eps_i \cdot \omega(2\vartheta t) = (2\vartheta)^{-r} \cdot \eps_i \cdot c_2
\end{align*}
with an absolute constant $c_2 = c_2(d,r)>0$.
Hence, for any $t \in \Omega^d/(4\vartheta)$ we get
\begin{align}
\sum_{i=1}^m  \abs{f_\eps(t + \xi_i) - P(t+\xi_i)}
 &\geq \underset{\tau \in \RR^d}{\underset{P' \in \prs(\rndl)}{\inf}} \sum_{i=1}^m  \abs{(2\vartheta)^{-r} \cdot \eps_i \cdot c_2 - P'(\xi_i + \tau)} \nonumber\\
 \label{eq:eq_low}
 & = c_2 \cdot (2\vartheta)^{-r} \cdot \underset{\tau \in \RR^d}{\underset{P' \in \prs(\rndl)}{\inf}} \sum_{i=1}^m \abs{\eps_i - P'(\xi_i + \tau)}.
\end{align}
At the last step, we used that the set $\rndl$ is invariant under multiplication with nonzero factors and that the map $\prs$ is linear, whence the
scaling invariance translates to $\prs(\rndl)$.
Using the bound \eqref{eq:eq_low}, we get
\begin{align*}
\mnorm{f_\eps - P}_{L^1(B^d)} &\geq {\metalambda}^d\left(\Omega^d/(4\vartheta)\right) \cdot c_2 \cdot (2\vartheta)^{-r}
 \cdot \underset{\tau \in \RR^d}{\underset{P' \in \prs(\rndl)}{\inf}} \sum_{i=1}^m \abs{\eps_i - P'(\xi_i + \tau)} \\
 &= \left(\frac{1}{2\vartheta \cdot \sqrt{d}}\right)^d
 \cdot c_2 \cdot (2\vartheta)^{-r}
 \cdot \underset{\tau \in \RR^d}{\underset{P' \in \prs(\rndl)}{\inf}} \sum_{i=1}^m \abs{\eps_i - P'(\xi_i + \tau)} \\
 \overset{\vartheta \leq m^{1/d}}&{\geq} c_2 \cdot c_3 \cdot \frac{1}{m \cdot (2\vartheta)^r} \cdot \underset{x \in \Pi_{m,s,n,\ell,d}}{\inf} \onenorm{x-\eps},
\end{align*}
where we set $c_3 = c_3(d) \defeq \left(\frac{1}{2\sqrt{d}}\right)^d$ \paull{and recall that $\Pi_{m,s,n,\ell,d}$ was defined in \eqref{eq:def_pi}}.
From \Cref{lem:epsbound,lem:pi_small}, we infer, under the constraint that $s \geq C$, the existence of $\eps^\ast \in E^m$ with 
\[
\underset{x \in \Pi_{m,s,n,\ell,d}}{\inf} \onenorm{x-\eps^\ast} \geq am
\]
with an absolute constant $a>0$.
Hence, we get 
\[
\mnorm{f_{\eps^\ast} - P}_{L^1(B^d)} \geq a \cdot c_2 \cdot c_3 \cdot (2\vartheta)^{-r} \overset{\vartheta \leq m^{1/d}}{\geq} a \cdot c_2 \cdot c_3 \cdot 2^{-r} \cdot m^{-r/d}.
\]
By assumption, we have $m \leq 2c_1 \cdot s^d$, which yields
\[
\mnorm{f_{\eps^\ast} - P}_{L^1(B^d)} \geq a \cdot c_2 \cdot c_3 \cdot 2^{-r} \cdot (2c_1)^{-r/d} \cdot s^{-r}.
\]
Furthermore, since $s \leq (2c_0)^{1/(d-\ell)}\cdot n^{1/(d-\ell)}$, we get 
\[
\mnorm{f_{\eps^\ast} - P}_{L^1(B^d)} \geq a \cdot c_2 \cdot c_3 \cdot 2^{-r} \cdot (2c_1)^{-r/d} \cdot (2c_0)^{-r/(d-\ell)} \cdot n^{-r/(d-\ell)}.
\]
Since $c_0 = \frac{4^{\ell + 3}}{c_1}$, this translates to 
\[
\mnorm{f_{\eps^\ast} - P}_{L^1(B^d)} \geq a \cdot c_2 \cdot c_3 \cdot 2^{-r} \cdot 2^{-r/d} \cdot 2^{-r(2\ell + 7)/(d-\ell)} \cdot c_1^{(r/(d-\ell))-(r/d)}  \cdot n^{-r/(d-\ell)}.
\]

Therefore, the claim follows letting $c = c(d,\ell,r) \defeq a \cdot c_2 \cdot c_3 \cdot 2^{-r} \cdot 2^{-r/d} \cdot 2^{-r(2\ell + 7)/(d-\ell)}$.
\end{proof}
The following lemma is a central ingredient in order to prove the final approximation bound and follows immediately from the fact that the quasi-projection 
$\prs$ is bounded with respect to the
$L^1$-norm, with the bound being independent of $s$.
\begin{lemma}[{corrected version of \cite[Lemma~5]{maiorov2010best}}] \label{lem:prob_wrong}
Let $d \in \NN$. 
Then there exists a positive constant $c = c(d)>0$ with the following property:
For any choice of $\ell, s, n \in \NN$ with $\ell< d$ and $P \in \polyy_s(B^d)$  
it holds
\[
\underset{R \in \rndl}{\inf} \mnorm{P-R}_{L^1(B^d)} \geq c \cdot \underset{P' \in \prs(\rndl)}{\inf} \mnorm{P-P'}_{L^1(B^d)}.
\] 
\end{lemma}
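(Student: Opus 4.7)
The plan is to exploit the three properties of the quasi-projection $\prs$ collected in \Cref{prop:prs}: that $\prs$ is a linear operator, that it fixes every polynomial of degree at most $s$, and crucially that its operator norm from $L^1(B^d)$ to $L^1(B^d)$ is bounded by a constant $C_0 = C_0(d)$ that does not depend on $s$. The argument is a one-line estimate once these ingredients are in place.

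More precisely, I would first fix $P \in P_s(B^d)$ and let $r \in \rndl$ be arbitrary. Since $r = \sum_{j=1}^n r_j(A_j \cdot)$ with $r_j \in \mb(\RR^\ell)$ and $A_j B^d$ is compact, every summand $r_j(A_j \cdot)$ is bounded on $B^d$, so $r \in L^\infty(B^d) \subseteq L^1(B^d)$. Hence $\prs r$ is well-defined and lies in $\prs(\rndl)$. Using the fixed-point property $\prs P = P$ together with linearity and the uniform $L^1$-bound, I would write
\[
\underset{P' \in \prs(\rndl)}{\inf}\mnorm{P-P'}_{L^1(B^d)} \le \mnorm{P - \prs r}_{L^1(B^d)} = \mnorm{\prs(P-r)}_{L^1(B^d)} \le C_0 \cdot \mnorm{P-r}_{L^1(B^d)}.
\]
Taking the infimum over $r \in \rndl$ on the right-hand side and dividing by $C_0$ yields the claim with $c = 1/C_0$, which depends only on $d$.

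The entire proof is routine modulo the three assertions of \Cref{prop:prs}, so the only conceptual point worth emphasizing in the write-up is \emph{why} this works where the analogous argument with an orthogonal projection does not: an $L^2$-orthogonal projection onto $P_s(B^d)$ is not uniformly bounded on $L^1(B^d)$ as $s \to \infty$, which is precisely the source of the gap in \cite{maiorov2010best} discussed in \Cref{sec:disc}. Replacing the orthogonal projection by the quasi-projection $\prs$ (whose range is merely $P_{2s-1}(B^d)$ rather than $P_s(B^d)$, but which has $L^1$-operator norm bounded independently of $s$) repairs the argument without changing the downstream combinatorial machinery built in \Cref{sec:signset}. I do not anticipate any genuine obstacle in executing this plan.
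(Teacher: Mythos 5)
Your proposal is correct and coincides with the paper's own proof: both use the linearity of $\prs$, the fixed-point property $\prs P = P$, and the $s$-independent $L^1$-boundedness from \Cref{prop:prs} to get $\mnorm{P-\prs r}_{L^1(B^d)} \le C_0\mnorm{P-r}_{L^1(B^d)}$, then take infima. No gaps.
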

\begin{proof}
Let $P \in \polyy_s(B^d)$ and $R \in \rndl$ be arbitrary. 
Since $P-R$ is bounded on $B^d$, we clearly have $P-R \in L^1(B^d)$.
Then we have 
\[
\mnorm{P-R}_{L^1(B^d)} \geq c \cdot \mnorm{\prs (P-R) }_{L^1(B^d)} = c \cdot \mnorm{P - \prs(R) }_{L^1(B^d)}
\]
by the properties of $\prs$ from \Cref{prop:prs}. 
By taking the infimum on the right-hand side, we get 
\[
\mnorm{P-R}_{L^1(B^d)} \geq c \cdot \underset{P' \in \prs(\rndl)}{\inf} \mnorm{P-P'}_{L^1(B^d)}.
\]
Now we can take the infimum over $R \in \rndl$ on the left-hand side and obtain the claim. 
\end{proof}
We now have everything we need to complete our proof of the lower bound. 
\begin{theorem}\label{thm:main}
Let $d,\ell, r \in \NN$ with $\ell < d$. Then there exists a constant $c = c(d,\ell,r) >0$ with the property that for any $n \in \NN$ we have
\[
\underset{f \in \sobb}{\sup} \ \underset{R \in \rndl}{\inf} \mnorm{f - R}_{L^1(B^d)} \geq c \cdot n^{-r/(d-\ell)}.
\]
\end{theorem}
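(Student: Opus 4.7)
The plan is to upgrade the lower bound of \Cref{prop:low_bound_eps}, which concerns approximation by the \emph{quasi-projected} class $\prs(\rndl)$, to a lower bound for approximation by the full class $\rndl$ itself. The transfer mechanism is \Cref{lem:prob_wrong}, which already performs such a transfer but only for \emph{polynomial} targets; the bridge from the explicit bump function $f^{*} \in \mathcal{F}^{m,r,d}$ to a polynomial is supplied by the Jackson-type estimate \Cref{prop:jack}.

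Concretely, I would first fix a small parameter $c_1 > 0$ (its precise value to be chosen at the end, depending only on $d,\ell,r$) and set $c_0 \defeq 4^{\ell+3}/c_1$. For $n$ large enough, I would select $s, m \in \NN$ with $c_0 n \leq s^{d-\ell} \leq 2c_0 n$ and $c_1 s^d \leq m \leq 2 c_1 s^d$ so that the hypotheses of \Cref{prop:low_bound_eps} hold; this yields $f^{*} \in \mathcal{F}^{m,r,d} \subseteq \sobb$ with
\begin{align*}
\inf_{P' \in \prs(\rndl)} \mnorm{f^{*} - P'}_{L^1(B^d)} \ \geq \ c_{\star} \cdot c_1^{\,r/(d-\ell) - r/d} \cdot n^{-r/(d-\ell)},
\end{align*}
where $c_{\star} = c_{\star}(d,\ell,r) > 0$. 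Next, applying \Cref{prop:jack} to $f^{*}$ at degree $s$ produces $P^{*} \in P_s(B^d)$ satisfying $\mnorm{f^{*} - P^{*}}_{L^1(B^d)} \leq C_J \cdot s^{-r}$. For any $R \in \rndl$, the triangle inequality combined with \Cref{lem:prob_wrong} applied to the polynomial $P^{*}$ gives
\begin{align*}
\mnorm{f^{*} - R}_{L^1(B^d)} \ \geq \ c_L \cdot \inf_{P' \in \prs(\rndl)} \mnorm{f^{*} - P'}_{L^1(B^d)} \ - \ (c_L + 1)\, C_J \, s^{-r},
\end{align*}
with $c_L = c_L(d) > 0$ coming from the uniform $L^1$-boundedness of $\prs$ in \Cref{prop:prs}.

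Using $s^{-r} \leq (c_0 n)^{-r/(d-\ell)}$ and $c_0 = 4^{\ell+3}/c_1$, the correction term scales like $c_1^{\,r/(d-\ell)} \cdot n^{-r/(d-\ell)}$, while the main term from \Cref{prop:low_bound_eps} scales like $c_1^{\,r/(d-\ell) - r/d} \cdot n^{-r/(d-\ell)}$. The main obstacle — and what makes the whole strategy work — is this delicate balancing: both constants decay as $c_1 \to 0$, but their ratio is $c_1^{\,r/d}$, which tends to zero. Consequently, by selecting $c_1$ small enough in terms of $c_{\star}, c_L, C_J$ (hence only of $d, \ell, r$), the lower bound can be made to dominate the correction by, say, a factor of two, yielding for all $n \geq n_0(d,\ell,r)$ the desired bound $\inf_{R \in \rndl} \mnorm{f^{*} - R}_{L^1(B^d)} \geq c_{\sharp} \cdot n^{-r/(d-\ell)}$.

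Finally, the finitely many small values $n < n_0$ would be absorbed into the constant: since $\rndl$ is monotone in $n$ (pad with zero summands), the supremum-infimum at $n < n_0$ is at least that at $n_0$, hence at least $c_{\sharp} \cdot n_0^{-r/(d-\ell)}$. Taking $c \defeq c_{\sharp} \cdot n_0^{-r/(d-\ell)}$ then produces a uniform constant valid for all $n \in \NN$. The step I expect to require the most care is verifying that the constants $c_{\star}, c_L, C_J$ depend only on the permitted parameters before the choice of $c_1$ is made; in particular, the $d$-dependence (and nothing more) of $c_L$ is what makes the final balance possible and is precisely what would fail if $\prs$ were replaced by an orthogonal projection onto $P_s(B^d)$.
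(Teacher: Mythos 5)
Your proposal is correct and follows essentially the same route as the paper's proof: it combines \Cref{prop:low_bound_eps} with the Jackson estimate of \Cref{prop:jack} and the transfer lemma \Cref{lem:prob_wrong}, balances the main term $\sim c_1^{r/(d-\ell)-r/d}$ against the correction $\sim c_1^{r/(d-\ell)}$ by taking $c_1$ small, and absorbs small $n$ by monotonicity of $\rndl$ in $n$. The only differences are cosmetic (e.g., your correction constant $(c_L+1)C_J$ versus the paper's $2C_2$).
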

\begin{proof}
Let $c_1 \paul{= c_1(d,\ell,r)}> 0$ be arbitrary (to be determined later) and set $c_0 \defeq 4^{\ell + 3} / c_1$.
Let the two constants $\tilde{c} = \tilde{c}(d, \ell, r)>0$ and $\tilde{C} = \tilde{C}(d, \ell, c_1)>0$ be given by \Cref{prop:low_bound_eps}. 

For the moment, we assume that $n \geq N$ with a large number $N = N(d,\ell,c_1) \in \NN$ such that the conditions 
\[
n^{1/(d-\ell)} \left((2c_0)^{1/(d-\ell)} - c_0^{1/(d-\ell)}\right) \geq 1, \quad c_0^{1/(d-\ell)} \cdot n^{1/(d-\ell)} \geq \tilde{C} \quad \text{and} 
\quad c_0^{1/(d-\ell)} \cdot n^{1/(d-\ell)} \geq c_1^{-1/d}
\]
are satisfied. 
The first condition ensures that we can pick a natural number $s \in \NN$ that satisfies $c_0 n \leq s^{d-\ell} \leq 2c_0 n$.
From the second condition, we infer that this implies $s  \geq \tilde{C}$. 
And lastly, the third condition guarantees that 
\[
s \geq c_0^{1/(d-\ell)} \cdot n^{1/(d-\ell)} \geq c_1^{-1/d} \quad \text{and thus} \quad (2c_1) s^d -c_1s^d \geq 1,
\]
whence we can pick a natural number $m \in \NN$ with $c_1s^d \leq m \leq (2c_1)s^d$.
To summarize, we have
\[
c_1 s^d \leq m \leq 2c_1 s^d, \quad c_0 n \leq s^{d-\ell} \leq 2c_0 n \quad \text{and} \quad s \geq \tilde{C}.
\]
According to \Cref{prop:low_bound_eps}, we can thus pick a function $f_\eps \in \mathcal{F}^{m,r,d}$ that satisfies 
\begin{equation}\label{eq:star}
\underset{P \in \prs(\rndl)}{\inf} \mnorm{P - f_\eps}_{L^1(B^d)} \geq \tilde{c} \cdot c_1^{(r/(d-\ell))- (r/d)} \cdot n^{-r/(d-\ell)}.
\end{equation}
Moreover, using \Cref{prop:jack,prop:fmsubset}, we pick a polynomial $g_\eps \in \polyy_s(B^d)$ with 
\[
\mnorm{g_\eps - f_\eps}_{L^1(B^d)} \leq C_2 \cdot s^{-r},
\]
with an absolute constant $C_2 = C_2(d,r)> 0$.
We thus get
\begin{align*}
 \underset{R \in \rndl}{\inf} \mnorm{R - f_\eps}_{L^1(B^d)} 
 &\geq  \underset{R \in \rndl}{\inf} \mnorm{g_\eps - R}_{L^1(B^d)} - \mnorm{g_\eps - f_\eps}_{L^1(B^d)} \\
 &\geq c' \cdot \underset{P \in \prs(\rndl)}{\inf} \mnorm{g_\eps - P}_{L^1(B^d)} - C_2s^{-r},
\end{align*}
where $c' = c'(d) \paull{\in (0,1]}$ is provided by \Cref{lem:prob_wrong} \paull{(we can assume that $c' \leq 1$ by shrinking it, if necessary)}. 
This can be further bounded via
\paull{
\begin{align*}
 &\norel c' \cdot \underset{P \in \prs(\rndl)}{\inf} \mnorm{g_\eps - P}_{L^1(B^d)} - C_2s^{-r} \\
 &\geq c' \cdot \left(\underset{P \in \prs(\rndl)}{\inf} \mnorm{f_\eps - P}_{L^1(B^d)} - \mnorm{f_\eps - g_\eps}_{L^1(B^d)}\!\right)- C_2s^{-r} \\
 &\geq c' \cdot\underset{P \in \prs(\rndl)}{\inf} \mnorm{f_\eps - P}_{L^1(B^d)} - c'C_2 s^{-r}-C_2 s^{-r} \\
 \overset{c' \leq 1}&{\geq} c' \cdot\underset{P \in \prs(\rndl)}{\inf} \mnorm{f_\eps - P}_{L^1(B^d)} - 2C_2 s^{-r} \\
 &\geq  c' \tilde{c} \cdot c_1^{(r/(d-\ell))- (r/d)}\cdot n^{-r/(d-\ell)}- 2C_2 s^{-r},
\end{align*}
}
where we used \eqref{eq:star}.
Since $s \geq c_0^{1/(d-\ell)} \cdot n^{1/(d-\ell)}$ and $c_0 = 4^{\ell +3}/c_1$, we get
\begin{align*}
 \underset{R \in \rndl}{\inf} \mnorm{R - f_\eps}_{L^1(B^d)} &\geq 
c'\tilde{c} \cdot c_1^{(r/(d-\ell))- (r/d)}\cdot n^{-r/(d-\ell)}- 2C_2 s^{-r} \\
&\geq c'\tilde{c} \cdot c_1^{(r/(d-\ell))- (r/d)}\cdot n^{-r/(d-\ell)}- 2C_2 c_0^{-r/(d-\ell)} n^{-r/(d-\ell)} \\
&= \left( c'\tilde{c} \cdot c_1^{(r/(d-\ell))- (r/d)} - 2C_2 \cdot 4^{-r(\ell + 3) /(d-\ell)} \cdot c_1^{r/(d-\ell)}\right) \cdot n^{-r/(d-\ell)}.
\end{align*}
We set $C_3 = C_3(d,\ell, r) \defeq 2C_2 \cdot 4^{-r(\ell + 3) /(d-\ell)}$. We then see
\begin{align*}
c'\tilde{c} \cdot c_1^{(r/(d-\ell))- (r/d)} - C_3 \cdot c_1^{r/(d-\ell)} > 0  \quad \Leftrightarrow \quad c'\tilde{c} \cdot c_1^{-r/d} > C_3 \quad \Leftrightarrow \quad 
c_1 < \left(\frac{c'\tilde{c}}{C_3}\right)^{d/r}.
\end{align*}
Therefore, by picking $c_1 = c_1(d, \ell, r)>0$ small enough, we can achieve the desideratum by letting 
\[
c = c(d, \ell, r) \defeq c'\tilde{c} \cdot c_1^{(r/(d-\ell))- (r/d)} - C_3 \cdot c_1^{r/(d-\ell)}
\]
and noting that $f_\eps \in \sobb$ according to \Cref{prop:fmsubset}.

Recall that we assumed that $n \geq N$. 
In order to obtain the desired claim for any $n \in \NN$, we first note that $\rndl \subseteq \mathcal{R}^\ast_{N,d,\ell}$ for any $n \leq N$. 
The above therefore tells us 
\[
\underset{f \in \sobb}{\sup} \ \underset{R \in \rndl}{\inf} \mnorm{R - f}_{L^1(B^d)}
\geq \underset{f \in \sobb}{\sup} \ \underset{R \in \mathcal{R}^\ast_{N,d,\ell}}{\inf} \mnorm{R - f}_{L^1(B^d)}> 0 \quad \text{for any } n \leq N.
\]
Hence, we obtain the claim by possibly shrinking the size of the constant $c$. 
\end{proof}

It is now straightforward to generalize the claim to the case of approximation of Sobolev functions from $\sob$ with respect to $\mnorm{\cdot}_{L^q}$ for arbitrary $p,q \in [1, \infty]$.
\newcommand{\rrrndl}{R_{n,d,\ell}}
\begin{corollary}\label{corr:low_final}
Let $d,\ell, r \in \NN$ with $\ell < d$ and $p,q \in [1, \infty]$.
Then there exists a positive constant $c = c(d, \ell, p, q, r)>0$ with the property that for any $n \in \NN$ we have 
\[
\underset{f \in \sob}{\sup} \ \underset{R \in \rrndl}{\inf} \mnorm{f - R}_{L^q(B^d)} \geq c \cdot n^{-r/(d-\ell)}.
\]
Here, $\rrndl$ is as defined in \Cref{eq:rrndl}.
\end{corollary}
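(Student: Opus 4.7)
The plan is to reduce the general case $p,q \in [1,\infty]$ to the already-established case $p = \infty$, $q = 1$ treated in \Cref{thm:main}. Three routine ingredients suffice: (i) a Hölder-type inequality on the bounded domain $B^d$ relating the $L^1$ and $L^q$ norms, (ii) the comparison $\mnorm{f}_{W^{r,p}_d} \leq c_{d,p,r} \mnorm{f}_{W^{r,\infty}_d}$, which yields the set embedding $\sobb \subseteq c_{d,p,r} \cdot \sob$, and (iii) \Cref{prop:infequal}, which lets us trade the infimum over $\rrrndl$ for the infimum over $\rndl$ appearing in \Cref{thm:main}, provided we measure the error in the $L^1$-norm.

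Concretely, I would first observe that for every measurable $g : B^d \to \RR$ and every $q \in [1,\infty]$, Hölder's inequality applied to $g$ and the constant function $1$ gives
\[
\mnorm{g}_{L^q(B^d)} \geq \mathrm{vol}(B^d)^{1/q - 1} \cdot \mnorm{g}_{L^1(B^d)},
\]
with the convention that the right-hand side is understood as $\mathrm{vol}(B^d)^{-1} \cdot \mnorm{g}_{L^1(B^d)}$ when $q = \infty$; the inequality is trivially true if its left-hand side is infinite. A direct computation from \eqref{eq:sob_def}, using that $B^d$ has finite volume, provides a constant $c_{d,p,r} > 0$ such that $\sobb \subseteq c_{d,p,r} \cdot \sob$ (with $c_{d,\infty,r} = 1$). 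Finally, it is immediate from the definition that $\rndl$ is invariant under multiplication by any nonzero real scalar, since $\mb(\RR^\ell)$ is a vector space.

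Given an arbitrary $f \in \sobb$ and any $R \in \rrrndl$, I set $g \defeq f / c_{d,p,r} \in \sob$. Applying the Hölder-type bound to $g - R$, then \Cref{prop:infequal} to convert the $L^1$-infimum over $\rrrndl$ into an infimum over $\rndl$, and finally rescaling $R \mapsto R/c_{d,p,r}$ inside this last infimum (valid by scalar invariance of $\rndl$), yields
\[
\inf_{R \in \rrrndl} \mnorm{g - R}_{L^q(B^d)} \geq \frac{\mathrm{vol}(B^d)^{1/q - 1}}{c_{d,p,r}} \cdot \inf_{R \in \rndl} \mnorm{f - R}_{L^1(B^d)}.
\]
Taking the supremum over $f \in \sobb$ on the right-hand side (noting that the corresponding $g$ ranges over a subset of $\sob$) and invoking \Cref{thm:main} yields the desired lower bound, with final constant $c(d,\ell,p,q,r) = \mathrm{vol}(B^d)^{1/q - 1} \cdot c(d,\ell,r) / c_{d,p,r}$.

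I do not anticipate any genuine obstacle: the three ingredients decouple cleanly and the argument is essentially mechanical once \Cref{thm:main} and \Cref{prop:infequal} are available. The only points requiring mild care are tracking the scalar-multiplication invariance of $\rndl$ when passing between $g$ and $f$, and reading the Hölder-type inequality correctly in the endpoint case $q = \infty$.
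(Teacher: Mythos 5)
Your proposal is correct and follows essentially the same route as the paper's proof: both reduce to \Cref{thm:main} by combining Hölder's inequality on the bounded domain $B^d$ (to pass from $L^q$ to $L^1$), the embedding $\sobb \subseteq c_{d,p,r}\cdot\sob$ together with the scaling invariance of the ridge-function classes, and \Cref{prop:infequal} to trade $\rrrndl$ for $\rndl$. The only difference is the order in which these three reductions are chained, which is immaterial.
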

\begin{proof}
Let $c' = c'(d,\ell,r)>0$ be chosen according to \Cref{thm:main}.
A direct application of \Cref{prop:infequal} then yields 
\[
\underset{f \in \sobb}{\sup} \ \underset{R \in \rrndl}{\inf} \mnorm{f - R}_{L^1(B^d)} \geq c' \cdot n^{-r/(d-\ell)}.
\]
Moreover, according to Hölder's inequality, we may pick a constant $C_1 = C_1(d,p)>0$ which satisfies 
\[
\mnorm{f}_{L^p(B^d)} \leq C_1 \cdot \mnorm{f}_{L^\infty (B^d)} \quad \text{for all measurable } f: B^d \to \RR.
\]
This implies $c_2 \cdot \sobb \subseteq \sob$ for a constant $c_2 = c_2(d,r,p)>0$, which yields 
\begin{align*}
\underset{f \in \sob}{\sup} \ \underset{R \in \rrndl}{\inf} \mnorm{f - R}_{L^1(B^d)}
&\geq \underset{f \in c_2 \cdot \sobb}{\sup} \ \underset{R \in \rrndl}{\inf} \mnorm{f-R}_{L^1(B^d)} \\
&= \underset{f \in \sobb}{\sup} \ \underset{R \in \rrndl}{\inf} \mnorm{c_2 \cdot f - R}_{L^1(B^d)} \\
&= c_2 \cdot \underset{f \in \sobb}{\sup} \ \underset{R \in \rrndl}{\inf} \mnorm{f - R}_{L^1(B^d)} ,
\end{align*}
where the last equality uses the fact that the set $\rrndl$ is invariant with respect to scaling. 
Moreover, there exists another constant $C_3 = C_3(d,q)>0$ with 
\[
\mnorm{f}_{L^1(B^d)} \leq C_3 \cdot \mnorm{f}_{L^q (B^d)} \quad \text{for all measurable } f: B^d \to \RR,
\]
again according to Hölder's inequality. 
Therefore,
\begin{align*}
\underset{f \in \sob}{\sup}\  \underset{R \in \rrndl}{\inf} \mnorm{R - f}_{L^q(B^d)} 
&\geq C_3^{-1} \cdot \underset{f \in \sob}{\sup} \ \underset{R \in \rrndl}{\inf} \mnorm{R - f}_{L^1(B^d)} \\
&\geq c_2 \cdot C_3^{-1} \cdot \underset{f \in \sobb}{\sup} \ \underset{R \in \rrndl}{\inf} \mnorm{R - f}_{L^1(B^d)} \\
&\geq c_2 \cdot C_3^{-1} \cdot c' \cdot n^{-r/(d-\ell)}.
\end{align*}
This yields the claim by picking $c = c(d,\ell,p,q,r) \defeq c_2 \cdot C_3^{-1} \cdot c'$.
\end{proof}

\newcommand{\poly}{\paull{\mathcal{R}}_{n,d,\ell}^{\mathrm{poly}}}
\section{Proof of the upper bound}\label{sec:up_bound}
In this section, we prove the upper bound $\paull{\mathcal{O}}\left(n^{-r/(d-\ell)}\right)$ for the rate of approximating Sobolev functions using sums of \paull{$n$} multivariate ridge functions, which is the optimal
rate according to \Cref{corr:low_final}.
Moreover, we show that this optimal rate can even be attained using sums of \emph{polynomial} ridge functions
with \emph{fixed} matrices $A_1,\dots,A_n \in \RR^{\ell \times d}$.
To this end, for fixed $A_1, \dots, A_n \in \RR^{\ell \times d}$, we introduce the notation
\[
\poly(A_1, \dots, A_n) \defeq \left\{ B^d \ni x \mapsto \sum_{j=1}^n P_j(A_j x): \ P_j \in \polyy(\RR^\ell)\right\}.
\]
Furthermore, we denote by $\polyy_s^h(\RR^{d-\ell + 1})$ the space of \emph{homogeneous} polynomials in $d-\ell + 1$ variables of degree $s$, i.e., 
\[
\polyy_s^h(\RR^{d-\ell + 1}) \defeq \left\{\RR^{d-\ell + 1} \ni x \mapsto \underset{\abs{\kk} = s}{\sum_{\kk \in \NN_0^{d-\ell + 1}}} a_\kk x^\kk: \ a_\kk \in \RR\right\}.
\]
We start by showing that each (not necessarily homogeneous) polynomial of degree at most $s$ can be written as the sum of $n$ $\ell$-variate polynomial ridge functions, 
if we choose the number $n$ to exceed the dimension of the space $\polyy_s^h(\RR^{d-\ell + 1})$.
This result is mainly based on results from \cite[Section~5]{pinkus_ridge_2016}.
\begin{proposition}\label{prop:up_1}
Let $d,n,\ell,s \in \NN$ with $\ell < d$ and  
\begin{equation*}
\dim \left(\polyy_s^h (\RR^{d-\ell + 1})\right) \leq n.
\end{equation*}
Then there exist matrices $A_1, \dots, A_n \in \RR^{\ell \times d}$ with 
\begin{equation*}
\polyy_s(B^d) \subseteq \poly(A_1,\dots, A_n).
\end{equation*}
\end{proposition}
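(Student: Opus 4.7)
My plan is to reduce the claim to the classical Waring-type fact that $k$-th powers of linear forms span the space of homogeneous polynomials of degree $k$. First, since any $P \in P_s(B^d)$ uniquely decomposes into homogeneous components $P = P^{(0)} + \cdots + P^{(s)}$ with $P^{(k)} \in P_k^h(\RR^d)$, it suffices to choose matrices $A_1, \ldots, A_n$ once and prove, for every $k \in \{0, 1, \ldots, s\}$, the inclusion $P_k^h(\RR^d) \subseteq \spann \bigcup_{j=1}^n \{q \circ A_j : q \in P_k^h(\RR^\ell)\}$. Writing $P^{(k)} = \sum_{j=1}^n q_k^{(j)}(A_j x)$ with $q_k^{(j)} \in P_k^h(\RR^\ell)$ and assembling $p_j \defeq \sum_{k=0}^s q_k^{(j)} \in P_s(\RR^\ell)$ then recovers $P(x) = \sum_{j=1}^n p_j(A_j x)$, as required.

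I would construct the $A_j$ explicitly: choose $u_1, \ldots, u_n \in \RR^{d-\ell}$ in sufficiently general position (to be described below) and set
\[
A_j \defeq \begin{pmatrix} I_{(\ell-1)\times(\ell-1)} & 0 & 0 \\ 0 & 1 & u_j^T \end{pmatrix} \in \RR^{\ell \times d},
\]
so that $A_j x = \bigl(x_1, \ldots, x_{\ell-1},\, x_\ell + u_j \cdot (x_{\ell+1}, \ldots, x_d)\bigr)$. Introducing the auxiliary vector $z \defeq (x_\ell, x_{\ell+1}, \ldots, x_d) \in \RR^{d-\ell+1}$ and $\widetilde u_j \defeq (1, u_{j,1}, \ldots, u_{j,d-\ell}) \in \RR^{d-\ell+1}$, a direct calculation gives, for any monomial $p(y) = y_1^{k_1} \cdots y_\ell^{k_\ell}$,
\[
p(A_j x) = x_1^{k_1} \cdots x_{\ell-1}^{k_{\ell-1}} \cdot (\widetilde u_j \cdot z)^{k_\ell}.
\]
Every monomial $x^m$ of degree $k$ in $d$ variables corresponds uniquely to the assignment $(k_1, \ldots, k_{\ell-1}) = (m_1, \ldots, m_{\ell-1})$ and $k_\ell = m_\ell + \cdots + m_d$, with leftover factor a specific monomial of degree $k_\ell$ in $z$. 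Consequently, the inclusion from the first paragraph reduces to showing that for every $k_\ell \in \{0, 1, \ldots, s\}$, the $k_\ell$-th powers $\{(\widetilde u_j \cdot z)^{k_\ell}\}_{j=1}^n$ span $P_{k_\ell}^h(\RR^{d-\ell+1})$.

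The core is then the Waring-type statement that $k_\ell$-th powers of linear forms span $P_{k_\ell}^h(\RR^m)$ for any $m$, and that $\binom{k_\ell + m - 1}{m-1} = \binom{k_\ell + d - \ell}{d - \ell}$ sufficiently general such forms form a basis; the normalization $v_0 = 1$ imposed by $\widetilde u_j = (1, u_j)$ does not destroy spanning because $\{v \in \RR^m : v_0 \neq 0\}$ is Zariski dense in $\RR^m$. Since our hypothesis $n \geq \binom{s+d-\ell}{d-\ell}$ dominates $\binom{k_\ell + d - \ell}{d-\ell}$ for every $k_\ell \leq s$, the non-spanning locus for a given $k_\ell$ is a proper Zariski-closed subset of $(\RR^{d-\ell})^n$, cut out by the non-vanishing of a Vandermonde-like determinant in the monomials $u_j^\alpha$ with $|\alpha| \leq k_\ell$. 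The finite union over $k_\ell \in \{0, 1, \ldots, s\}$ remains proper, so a generic choice of $(u_1, \ldots, u_n)$ satisfies the spanning condition for all degrees simultaneously. The main technical obstacle is precisely this simultaneous genericity across all $k_\ell \leq s$; it can be handled abstractly by the genericity argument just sketched, or concretely by placing the $u_j$ on a rational normal curve with exponents chosen to avoid monomial collisions in the index $|\alpha|$.
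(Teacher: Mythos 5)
Your proposal is correct, and its architecture is the same as the paper's: both reduce the claim to the Waring-type fact that powers of $n$ well-chosen linear forms in $d-\ell+1$ variables span the relevant spaces of homogeneous polynomials, and both realize the ridge matrices in block form, with $\ell-1$ rows passing coordinates through unchanged and one row computing a linear form in the remaining $d-\ell+1$ variables (you put the pass-through block first, the paper puts it last; this is immaterial). The one step you handle genuinely differently is the simultaneous spanning across all degrees $k \leq s$ by a \emph{single} set of directions: the paper invokes \cite[Proposition~5.9 and Corollary~5.12]{pinkus_ridge_2016}, where the corollary says precisely that directions whose $s$-th powers span $P_s^h$ automatically have $k$-th powers spanning $P_k^h$ for all $k \leq s$; you instead prove this by a genericity argument (each non-spanning locus is a proper Zariski-closed subset of $(\RR^{d-\ell})^n$, and a finite union of such sets is still proper), with properness supplied by the classical Waring fact plus Zariski-density of the normalization $v_0 \neq 0$. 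Your route is self-contained where the paper's defers to Pinkus, at the cost of being slightly less sharp in spirit (Pinkus's corollary gives a clean implication from degree $s$ down to all lower degrees for the \emph{same} forms, rather than a "generic choice works for everything" argument); both are valid. Two small wording points: the non-spanning locus is cut out by the \emph{vanishing} (not non-vanishing) of all maximal minors of the coefficient matrix, and you should state explicitly that a linear functional vanishing on the image of a Zariski-dense set under a polynomial map vanishes on the whole image, which is what makes the normalization $\widetilde u_j = (1, u_j)$ harmless.
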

\begin{proof}
Using \cite[Proposition~5.9]{pinkus_ridge_2016} and $\dim \left(\polyy_s^h (\RR^{d-\ell + 1})\right) \leq n$, we can pick $a_1,\dots,a_n \in \RR^{d-\ell + 1}$ satisfying
\begin{equation*}
\polyy_s^h(\RR^{d- \ell + 1}) = \spann \left\{\RR^{d-\ell + 1} \ni x \mapsto (a_i^T x)^s: \ 1 \leq i \leq n\right\}.
\end{equation*}
From \cite[Corollary~5.12]{pinkus_ridge_2016}, we infer that 
\begin{equation*}
\polyy_s(\RR^{d-\ell + 1}) = \spann \left\{ \RR^{d-\ell + 1} \ni x \mapsto (a_i^Tx)^k: \ 1 \leq i \leq n, \ 0 \leq k \leq s\right\}.
\end{equation*}
Let $P \in \polyy_s(\RR^d)$ be arbitrary. With $x \in \RR^{d-\ell + 1}$ and $y \in \RR^{\ell - 1}$, we can then write
\begin{equation*}
P(x,y) = \underset{\abs{\kk} \leq s}{\sum_{\kk \in \NN_0^{\ell - 1}}} y^{\kk} P_{\kk}(x),
\end{equation*}
with suitably chosen polynomials $P_{\kk} \in \polyy_s(\RR^{d-\ell + 1})$.
Moreover, each $P_{\kk}$ can be written as 
\begin{equation*}
P_{\kk}(x) = \underset{0 \leq k \leq s}{\sum_{1 \leq i \leq n}} \alpha_{i,k,\kk} (a_i^Tx)^k
= \sum_{i=1}^n \underbrace{\sum_{k= 0}^s \alpha_{i,k,\kk} (a_i^Tx)^k}_{=: Q_{i,\kk} ( a_i^Tx)}
\end{equation*}
with suitable real coefficients $\alpha_{i,k,\kk} \in \RR$ and
with $Q_{i,\kk} \in \polyy_s(\RR)$ for each $i \in \{1,\dots,n\}$ and $\kk \in \NN_0^{\ell - 1}$ with $\abs{\kk} \leq s$.

This gives us
\begin{align*}
P(x,y) &= \underset{\abs{\kk} \leq s}{\sum_{\kk \in \NN_0^{\ell - 1}}} y^{\kk} P_{\kk}(x) 
= \underset{\abs{\kk} \leq s}{\sum_{\kk \in \NN_0^{\ell - 1}}} y^{\kk} \sum_{i=1}^n Q_{i, \kk}(a_i^Tx)
= \sum_{i=1}^n \underset{\abs{\kk} \leq s}{\sum_{\kk \in \NN_0^{\ell - 1}}} y^{\kk} Q_{i, \kk}(a_i^Tx). 
\end{align*}
For $i \in \{1,\dots,n\}$ we define $H_i \in \polyy(\RR^\ell)$ as
\begin{equation*}
H_i: \quad \RR^\ell = \RR \times \RR^{\ell - 1}\to \RR, \quad (t,y) \mapsto \underset{\abs{\kk} \leq s}{\sum_{\kk \in \NN_0^{\ell - 1}}} y^{\kk} Q_{i, \kk}(t)
\end{equation*}
and furthermore $A_i \in \RR^{\ell \times d}$ as 
\paull{
\renewcommand{\arraystretch}{1.5}
\[
A_i \defeq \left(\begin{array}{c|c}
   a_i^T &  0_{1 \times (\ell -1)} \\
   \hline
   0_{(\ell - 1) \times (d-\ell + 1)}& I_{(\ell - 1) \times (\ell - 1)}
\end{array}\right),
\]
}\renewcommand{\arraystretch}{1}
whence it holds that
\begin{equation*}
H_i \left(A_i \begin{pmatrix} x \\ y \end{pmatrix}\right) = \underset{\abs{\kk} \leq s}{\sum_{\kk \in \NN_0^{\ell - 1}}} y^{\kk} Q_{i, \kk}(a_i^Tx)
\end{equation*}
for $x \in \RR^{d-\ell + 1}$ and $y \in \RR^{\ell - 1}$. 
We thus get
\begin{equation*}
P(x,y) = \sum_{i=1}^n \ H_i \left(A_i \begin{pmatrix} x \\ y \end{pmatrix}\right).
\end{equation*}
Since $P \in \polyy_s(\RR^d)$ was arbitrary, the claim is shown. 
\end{proof}
We can now prove the upper bound. 
\begin{theorem}\label{thm:pqstatement}
Let $d, \ell,r \in \NN$ with $d > \ell$ and $1 \leq q \leq p \leq \infty$. Then there exists a positive constant $C = C(d,\ell, p,q,r) > 0$ with the following property:
For any $n \in \NN$ there exist matrices $A_1, \dots, A_n \in \RR^{\ell \times d}$ such that for any $f \in \sob$ there exists a function $R \in \poly(A_1, \dots, A_n)$ with 
\begin{equation*}
\Vert f - R \Vert_{L^q(B^d)} \leq C \cdot n^{-r/(d-\ell)}.
\end{equation*}
\end{theorem}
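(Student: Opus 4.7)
The plan is to combine the Jackson-type inequality for Sobolev functions (Proposition \ref{prop:jack}) with the representation of polynomials as sums of polynomial ridge functions with \emph{fixed} matrices (Proposition \ref{prop:up_1}). The two ingredients fit together almost mechanically once the degree parameter is calibrated correctly against $n$.

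Concretely, given $n \in \NN$, I would first choose $s = s(n) \in \NN$ as large as possible subject to the constraint $\dim P_s^h(\RR^{d-\ell+1}) \leq n$. Since
\[
\dim P_s^h(\RR^{d-\ell+1}) \;=\; \binom{s+d-\ell}{d-\ell}
\]
is a polynomial of degree $d-\ell$ in $s$ with leading coefficient $1/(d-\ell)!$, there exists a constant $c_1 = c_1(d,\ell) > 0$ such that this maximal $s$ satisfies $s \geq c_1 \cdot n^{1/(d-\ell)}$, at least for $n$ exceeding some threshold $n_0 = n_0(d,\ell)$. The finitely many small values $n < n_0$ can be handled at the end by enlarging the final constant, since $n^{-r/(d-\ell)}$ is bounded below on any finite set.

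With $s$ fixed in this way, I would invoke Proposition \ref{prop:up_1} to extract matrices $A_1,\ldots,A_n \in \RR^{\ell \times d}$ with the property that $P_s(B^d) \subseteq \poly(A_1,\ldots,A_n)$. The key point is that these matrices depend only on $n$ (and on $d,\ell$), not on the function $f$ that will be approximated. Then, for any $f \in \sob$, Proposition \ref{prop:jack} furnishes a polynomial $p \in P_s(B^d)$ satisfying
\[
\|f - p\|_{L^q(B^d)} \;\leq\; C_2 \cdot s^{-r}
\]
for some constant $C_2 = C_2(d,p,q,r)$. Because $p$ lies in $\poly(A_1,\ldots,A_n)$ by construction, we may set $R := p$, obtaining
\[
\|f - R\|_{L^q(B^d)} \;\leq\; C_2 \cdot s^{-r} \;\leq\; C_2 c_1^{-r} \cdot n^{-r/(d-\ell)},
\]
which is the required bound.

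There is essentially no analytic obstacle here, since the hard work is done by Propositions \ref{prop:jack} and \ref{prop:up_1}; the proof reduces to bookkeeping. The only mildly delicate points are verifying the asymptotic $\binom{s+d-\ell}{d-\ell} \asymp s^{d-\ell}$ to calibrate $s$ against $n$, and absorbing the small-$n$ regime into the final constant $C = C(d,\ell,p,q,r)$.
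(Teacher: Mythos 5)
Your proposal is correct and follows essentially the same route as the paper's proof: calibrate the degree $s$ so that $\dim P_s^h(\RR^{d-\ell+1}) \lesssim n$ (giving $s \gtrsim n^{1/(d-\ell)}$), invoke \Cref{prop:up_1} to realize $P_s(B^d)$ inside $\poly(A_1,\ldots,A_n)$ with $f$-independent matrices, and conclude with the Jackson bound of \Cref{prop:jack}. The only cosmetic difference is that the paper bounds the dimension by $C_1 s^{d-\ell}$ rather than using the exact binomial coefficient; both yield the same calibration.
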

\begin{proof}
Let $C_1 = C_1(d,\ell)>0$ be a constant satisfying 
\begin{equation*}
\dim \left(\polyy_s^h (\RR^{d-\ell + 1})\right) \leq C_1 \cdot s^{d - \ell} \quad \text{for every } s \in \NN;
\end{equation*}
we refer to \cite[Lemma~F.1]{geuchen2024optimal} for a proof of the existence of such a constant. 
We may assume $n \geq C_1$ at the cost of possibly increasing $C$ in the end, similar to the end of the proof of \Cref{thm:main}.
We then pick $s \in \NN$ as the maximal number with 
\begin{equation*}
C_1 \cdot s^{d-\ell} \leq n.
\end{equation*}
Note that this implies
\begin{equation*}
C_1 \cdot (2s)^{d-\ell} > n \quad \Leftrightarrow \quad C_1 \cdot 2^{d-\ell} \cdot s^{d-\ell} > n \quad \Leftrightarrow \quad s > c_2 \cdot n^{1/(d-\ell)}
\end{equation*}
with \paull{$c_2 = c_2(d,\ell)\defeq C_1^{1/(\ell-d)} \cdot \frac{1}{2}$}.
Moreover, note that according to \Cref{prop:up_1} the inclusion $\polyy_s(B^d) \subseteq \poly(A_1, \dots, A_n)$ holds for a specific choice of $A_1, \dots, A_n \in \RR^{\ell \times d}$.
Let $f \in \sob$ be arbitrary. 
According to \Cref{prop:jack}, we can pick $P \in \polyy_s(B^d) \subseteq \poly(A_1, \dots, A_n)$ with 
\begin{equation*}
\Vert f - P \Vert_{L^q(B^d)} \leq C_3 \cdot s^{-r} \leq C_3 \cdot c_2^{-r} \cdot n^{-r/(d-\ell)}
\end{equation*}
with an absolute constant $C_3 = C_3(d,p,q,r)>0$. 
Hence, in the end the claim follows defining the constant $C \defeq C_3 \cdot c_2^{-r}$.
\end{proof}

\begin{remark}
The proof of \Cref{thm:pqstatement} shows that the only property of Sobolev functions that is actually needed in order to obtain the approximation rate is the fact
that these functions have the property stated in \Cref{prop:jack}.
Therefore, one can establish the same approximation rate for the set consisting of all $L^p$-functions $f: B^d \to \RR$ for which 
\[
\underset{s \in \NN}{\sup}\  \underset{P \in \polyy_s(B^d)}{\inf} \left(\mnorm{f - P}_{L^q(B^d)} \cdot s^r \right) \leq C,
\]
where $C$ is a fixed constant.
Note that the final bound depends on the constant $C$.
\end{remark}
\newcommand{\nn}{\mathcal{CVNN}^{\phi}_{d,n}}
\section{Application to neural networks}\label{sec:nn}
In this section, we apply the results obtained in the previous sections to the case of shallow neural networks. 
More specifically, we show upper and lower bounds for the approximation of Sobolev functions using 
\emph{shallow generalized translation networks} and \emph{shallow complex-valued neural networks}. 
Here, for $d,\ell,n \in \NN$ 
and any (activation) function $\paul{\tau}: \RR^\ell \to \RR$ we define the set of \emph{shallow generalized translation networks} with $d$ input neurons, activation dimension $\ell$, $n$ 
hidden-layer neurons and activation function $\paul{\tau}$ as 
\newcommand{\nng}{\mathcal{NN}^{\paul{\tau}}_{d,\ell,n}}
\newcommand{\nngg}{\mathcal{NN}^{\paul{\tau}, \ast}_{d,\ell,n}}
\begin{align*}
&\nng \defeq \\
&\left\{ \RR^d \ni x \mapsto \sum_{k=1}^n c_k \paul{\tau}(A_k x + b_k) \in \RR: \ A_1, \dots, A_n \in \RR^{\ell \times d}, \ b_1, \dots, b_n \in \RR^\ell, \ c_1, \dots, c_n \in \RR\right\}.
\end{align*}
Note that we obtain a classical shallow neural network in the case $\ell = 1$. 
For technical reasons, we further introduce the set 
\begin{align*}
&\nngg \defeq \\
&\left\{ \RR^d \ni x \mapsto \sum_{k=1}^n c_k \paul{\tau}(A_k x + b_k) \in \RR: A_k \in \RR^{\ell \times d},  \rk(A_k) \in \{0,\ell\},
 b_k \in \RR^\ell, c_k \in \RR \text{ for } k \!\in \!\{1,\dots,n\}\right\}
\end{align*}
of generalized translation networks where the matrices $A_k$ are limited to full-rank matrices and the zero matrix. 
Lastly, for fixed matrices $A_1,\dots,A_n \in \RR^{\ell \times d}$, we let 
\[
\nng(A_1, \dots, A_n) \defeq \left\{ \RR^d \ni x \mapsto \sum_{k=1}^n c_k \paul{\tau}(A_k x + b_k) \in \RR:  \ b_1, \dots, b_n \in \RR^\ell, \ c_1, \dots, c_n \in \RR\right\}
\]
denote the set of all generalized translation networks with \paull{fixed} weight matrices $A_1, \dots, A_n$.

Let us now turn to complex-valued neural networks (CVNNs):
For any fixed (activation) function $\phi: \CC \to \CC$, we write 
\[
\nn \defeq \left\{ \CC^d \ni z \mapsto \sum_{k=1}^n \gamma_k \phi(\alpha_k^T z + \beta_k) \in \CC : \ 
\alpha_1, \dots, \alpha_n \in \CC^d, \beta_1, \dots, \beta_n, \gamma_1, \dots, \gamma_n \in \CC\right\}
\]
for the set of shallow CVNNs with $d$ input neurons, $n$ hidden-layer neurons and activation function $\phi$. 
Moreover, for fixed weight vectors $\alpha_1, \dots, \alpha_n \in \CC^d$ we let 
\[
\nn(\alpha_1, \dots, \alpha_n) \defeq \left\{ \CC^d \ni z \mapsto \sum_{k=1}^n \gamma_k \phi(\alpha_k^T z + \beta_k) \in \CC : \ 
 \beta_1, \dots, \beta_n, \gamma_1, \dots, \gamma_n \in \CC\right\}.
\]

At first glance, one might be tempted to think that one obtains the set $\nn$ as a special case of generalized translation networks by replacing $d$ by $2d$, 
putting $\ell =2$ and using $\CC \cong \RR^2$. 
However, this is not the case, for the following reasons:
\begin{itemize}
\item The functions in $\mathcal{NN}^{\paul{\tau}}_{2d,2,n}$ map to $\RR$ whereas CVNNs map to $\CC$, which is equivalent to \emph{two} real output neurons. 
\item The activation function in $\mathcal{NN}^{\paul{\tau}}_{2d,2,n}$ is a function $\RR^2 \to \RR$, whereas the activation
 function of a CVNN is a function $\CC \to \CC$, i.e., $\RR^2 \to \RR^2$.
\item The matrices $A_k$ in the definition of $\mathcal{NN}^{\paul{\tau}}_{2d,2,n}$ are arbitrary elements of $\RR^{2 \times (2d)}$.
       The complex vectors $\alpha_k$ in the definition of $\nn$ may be regarded as elements of $\RR^{2 \times (2d)}$ but only elements of $\RR^{2 \times (2d)}$ with a specific 
       structure arise in this way. 
       That is, when viewed as elements of $\RR^{2 \times (2d)}$ they have the block form 
       \begin{equation}\label{eq:struc}
       \left(\begin{array}{cc|cc|c|cc} 
       a_1 & b _1 & a_2 & b_2 & \dots & a_d & b_d \\
       -b_1 & a_1 & -b_2 & a_2 & \dots & -b_d & a_d\end{array}
       \right) \in \RR^{2 \times (2d)}
       \end{equation}
       with real numbers $a_1, \dots, a_d, b_1, \dots, b_d \in \RR$ that represent the real and imaginary parts of the entries of the weight vector $\alpha_k$.
\end{itemize}

\subsection{Generalized translation networks}\label{subsec:nng}
In this section, we provide sharp bounds on the rate of approximation of Sobolev functions by generalized translation networks as defined above. 
A lower bound can be obtained immediately from \Cref{corr:low_final} by noting that every function from $\nng$ is in fact the sum of $n$ $\ell$-variate ridge functions. 
\begin{corollary}\label{corr:gtn_low}
Let $d,\ell, r \in \NN$ with $\ell < d$ and $p,q \in [1,\infty]$ be arbitrary. 
Then there exists a positive constant $c = c(d,\ell,p,q,r)>0$ with the following property:
For every $n \in \NN$ there exists a function $f \in \sob$ such that for any activation function $\paul{\tau} \in L^{1}_{\mathrm{loc}}(\RR^\ell)$ we have 
\[
\underset{\paul{\mathcal{T}} \in \nngg}{\inf} \ \mnorm{f - \paul{\mathcal{T}}}_{L^q(B^d)} \geq c \cdot n^{-r/(d-\ell)}.
\]
If $\paul{\tau} \in \mb(\RR^\ell)$, we furthermore have
\[
\underset{\paul{\mathcal{T}} \in \nng}{\inf} \ \mnorm{f - \paul{\mathcal{T}}}_{L^q(B^d)} \geq c \cdot n^{-r/(d-\ell)}.
\]
\end{corollary}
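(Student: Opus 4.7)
The plan is to reduce the statement directly to \Cref{corr:low_final} by showing that every generalized translation network in $\nngg$ (resp.\ $\nng$ when $\sigma\in\mb(\RR^\ell)$) is itself an element of $\rrrndl$, i.e.\ a sum of $n$ $\ell$-variate ridge functions belonging to $L^1(B^d)$. Once this inclusion is established, the lower bound is immediate: take $f\in\sob$ to be the bad function provided by \Cref{corr:low_final} for the given $n$, and observe
\[
\underset{\Sigma\in\nngg}{\inf}\ \mnorm{f-\Sigma}_{L^q(B^d)}\ \geq\ \underset{R\in\rrrndl}{\inf}\ \mnorm{f-R}_{L^q(B^d)}\ \geq\ c\cdot n^{-r/(d-\ell)}.
\]

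The first step is to verify the inclusion $\nngg\subseteq\rrrndl$ under the assumption $\sigma\in L^1_{\mathrm{loc}}(\RR^\ell)$. Given $\Sigma(x)=\sum_{k=1}^n c_k\sigma(A_kx+b_k)$ with $\rk(A_k)\in\{0,\ell\}$, fix $k$ and set $h_k(y)\defeq c_k\sigma(y+b_k)$. Translation invariance gives $h_k\in L^1_{\mathrm{loc}}(\RR^\ell)$, so if $\rk(A_k)=\ell$, \Cref{prop:rank} shows that $x\mapsto h_k(A_kx)$ lies in $R_{d,\ell}$. If $\rk(A_k)=0$, then $A_k=0$ and the term collapses to the constant $c_k\sigma(b_k)$ (after fixing any representative of $\sigma$), which is trivially in $R_{d,\ell}$ (take the zero matrix together with a constant low-dimensional function). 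Summing over $k$ yields $\Sigma\in\rrrndl$.

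The second step handles the case $\sigma\in\mb(\RR^\ell)$, where the rank restriction can be dropped. Here no representative issue arises, since $\sigma$ is a genuine function that is bounded on the compact set $\bigcup_{k=1}^n(A_k B^d+b_k)$; consequently $x\mapsto c_k\sigma(A_kx+b_k)$ is bounded on $B^d$ and thus lies in $L^1(B^d)$, showing directly that it belongs to $R_{d,\ell}$ for any $A_k\in\RR^{\ell\times d}$. Once again $\Sigma\in\rrrndl$, and invoking \Cref{corr:low_final} completes the proof.

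I do not anticipate any substantial obstacle: the entire argument is a short bookkeeping reduction, and the only mildly delicate point is the $\rk(A_k)=0$ case under the weak assumption $\sigma\in L^1_{\mathrm{loc}}(\RR^\ell)$, which is why the hypothesis $\rk(A_k)\in\{0,\ell\}$ appears in the first part of the statement — it avoids having to compose a merely almost-everywhere-defined $\sigma$ with an affine map whose image is a lower-dimensional (null) subset of $\RR^\ell$, a composition that need not produce a well-defined $L^1$ function.
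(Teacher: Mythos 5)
Your proposal is correct and takes essentially the same route as the paper: both reduce to \Cref{corr:low_final} by checking that each network term $x\mapsto c_k\sigma(A_kx+b_k)$ lies in $R_{d,\ell}$, invoking \Cref{prop:rank} for full-rank $A_k$, treating $A_k=0$ as a constant, and using (local) boundedness of $\sigma$ to drop the rank restriction in the second case. The only cosmetic difference is that for bounded $\sigma$ the paper places the terms in $R^\ast_{d,\ell}$ (hence $\rndl$) directly, whereas you verify membership in $L^1(B^d)$; both are immediate.
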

\begin{proof}
Let $\paul{\mathcal{T}} \in \nng$ be arbitrary, i.e., 
\[
\paul{\mathcal{T}}(x) = \sum_{k=1}^n c_k \paul{\tau}(A_k x + b_k).
\]
For $k \in \{1,\dots,n\}$, we define $\profilefunction_k (x) \defeq c_k \cdot \paul{\tau}(x + b_k)$.
In the case of $\paul{\tau} \in \mb(\RR^\ell)$ we clearly get $\paull{\profilefunction_k \in \mb(\RR^\ell)}$ for any $k \in \{1,\dots,n\}$,
which implies $\paul{\mathcal{T}} \in \rndl$ in that case. 
On the other hand, if $\paul{\tau} \in  L^{1}_{\mathrm{loc}}(\RR^\ell)$ and $A_k \in \RR^{\ell \times d}$ is full-rank, we get that $\profilefunction_k \in  L^{1}(B^d)$ by \Cref{prop:rank}.
If $A_k = 0$, the function $\profilefunction_k$ is constant and therefore trivially contained in $L^{1}(B^d)$.
The claim then follows from \Cref{corr:low_final}.
\end{proof}
Note the subtle difference that we have to restrict to full-rank matrices $A_k$ (or the zero matrix) in the case of an arbitrary $L^{1}_{\mathrm{loc}}$-activation function.
This restriction is not necessary in the case of a locally bounded activation function. 
Whether this restriction in the case $\paul{\tau} \in L^{1}_{\mathrm{loc}}(\RR^\ell)$ is necessary or can be removed remains a question for further investigation.
 
In the following theorem, we show that for every $\ell \in \NN$ there exists a smooth activation function $\paul{\tau} : \RR^\ell\to \RR$ such that the optimal rate of $n^{-r/(d-\ell)}$
when approximating Sobolev functions with networks from $\nng$ can indeed be achieved. 
This activation function is constructed in a ``piecewise'' manner, such that the shifts of this function, restricted to $B^\ell$, form a dense subset of $C(B^\ell)$. 
\begin{theorem} \label{thm:gtn_upper}
Let $\ell \in \NN$. 
Then there exists a smooth activation function $\paul{\tau} : \RR^\ell \to \RR$ with the following property:
For every $d,r \in \NN$ with $\ell < d$ and $1 \leq q \leq p \leq \infty$, there exists a positive constant $C = C(d,\ell,p,q,r)>0$ 
such that for every $n \in \NN$ there exist matrices $A_1, \dots, A_n \in \RR^{\ell \times d}$ with the property that 
for every $f \in \sob$ there exists a shallow generalized translation network $\paul{\mathcal{T}} \in \nng(A_1, \dots, A_n)$ with 
\[
\mnorm{f - \paul{\mathcal{T}}}_{L^q(B^d)} \leq C \cdot n^{-r/(d - \ell)}.
\]
\end{theorem}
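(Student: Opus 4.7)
The plan is to combine the upper bound from \Cref{thm:pqstatement} with a handcrafted ``universal'' activation function $\sigma:\RR^\ell\to\RR$ whose shifts, restricted to $B^\ell$, can uniformly approximate every polynomial to arbitrary precision. Concretely, enumerate a countable set $\{q_j\}_{j\in\NN}\subseteq P(\RR^\ell)$ (e.g., polynomials with rational coefficients) whose restrictions to $B^\ell$ are dense in $C(B^\ell)$ with respect to $\Vert\cdot\Vert_{L^\infty(B^\ell)}$. Pick widely spaced centers $b_j\in\RR^\ell$ with $\mnorm{b_i-b_j}_2\ge 5$ for $i\ne j$, and a cutoff $\chi\in C^\infty_c(\RR^\ell)$ with $\chi\equiv 1$ on $B^\ell$ and $\supp\chi\subseteq 2B^\ell$. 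Define
\[
\sigma(x)\defeq\sum_{j=1}^\infty \chi(x-b_j)\cdot q_j(x-b_j).
\]
Because the supports $b_j+2B^\ell$ are pairwise disjoint, the sum is locally finite and hence $\sigma\in C^\infty(\RR^\ell)$. By construction, $\sigma(x+b_j)=q_j(x)$ for every $x\in B^\ell$ and every $j\in\NN$.

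Given $n\in\NN$, I would apply \Cref{thm:pqstatement} to obtain fixed matrices $A_1,\dots,A_n\in\RR^{\ell\times d}$ (depending only on $n,d,\ell,r$) with the property that for every $f\in\sob$ there are polynomials $p_1,\dots,p_n\in P(\RR^\ell)$ satisfying
\[
\mnorm{f-\sum_{k=1}^n p_k(A_k\cdot)}_{L^q(B^d)}\le C_1\cdot n^{-r/(d-\ell)}.
\]
To convert each polynomial ridge piece into a single shifted $\sigma$, set $M_k\defeq\max(1,\op{A_k})$, $\widetilde A_k\defeq A_k/M_k$, and $\widetilde p_k(y)\defeq p_k(M_k y)$. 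Since $\widetilde A_k x\in B^\ell$ for $x\in B^d$, and $\widetilde p_k\in P(\RR^\ell)$, the density of $\{q_j|_{B^\ell}\}$ lets us pick an index $j_k\in\NN$ with
\[
\sup_{y\in B^\ell}\bigl|\widetilde p_k(y)-q_{j_k}(y)\bigr|\le \varepsilon_n\defeq\frac{n^{-r/(d-\ell)}}{n\cdot|B^d|^{1/q}}.
\]
Then $\sigma(\widetilde A_k x+b_{j_k})=q_{j_k}(\widetilde A_k x)$ for every $x\in B^d$, so $|p_k(A_k x)-\sigma(\widetilde A_k x+b_{j_k})|=|\widetilde p_k(\widetilde A_k x)-q_{j_k}(\widetilde A_k x)|\le\varepsilon_n$.

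Now define $\Sigma(x)\defeq\sum_{k=1}^n\sigma(\widetilde A_k x+b_{j_k})\in\nng(\widetilde A_1,\dots,\widetilde A_n)$ (coefficients all equal to $1$, biases $b_{j_k}$). By the triangle inequality,
\[
\mnorm{f-\Sigma}_{L^q(B^d)}\le C_1\cdot n^{-r/(d-\ell)}+n\cdot\varepsilon_n\cdot|B^d|^{1/q}\le C\cdot n^{-r/(d-\ell)}.
\]
Crucially, the rescaled matrices $\widetilde A_k$ depend only on the $A_k$ produced by \Cref{thm:pqstatement} (hence only on $n,d,\ell,r$), not on $f$, so they may indeed be chosen before $f$, as required. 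The main technical point is the clean construction of $\sigma$: one must verify simultaneously its global $C^\infty$-smoothness (handled by the disjointness of windows and the cutoff $\chi$), the dense-approximation property of its shifts (handled by choosing $\{q_j\}$ dense in $C(B^\ell)$), and the fact that the rescaling $A_k\mapsto\widetilde A_k$ does not spoil the universality step (handled by ensuring $\widetilde A_k B^d\subseteq B^\ell$). Everything else is a routine application of the previously established results.
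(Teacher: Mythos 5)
Your proposal is correct and follows essentially the same route as the paper's proof: construct a "piecewise" smooth activation whose shifts reproduce a countable dense family of polynomials on $B^\ell$, invoke \Cref{thm:pqstatement} for fixed matrices and polynomial ridge approximants, rescale the matrices so that $A_k B^d \subseteq B^\ell$, and replace each polynomial piece by a shifted copy of $\sigma$ with error $o(n^{-r/(d-\ell)})$ via the triangle inequality. The only difference is cosmetic — you make the cutoff-based construction of $\sigma$ explicit, while the paper merely asserts its existence.
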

\begin{proof}
Let $\left\{ u_m : \ B^\ell \to \RR\right\}_{m \in \NN}$ be a countable set of smooth functions which is dense in $C(B^\ell)$ with respect to $\mnorm{\cdot}_{L^\infty}$.
For instance, we can take the set of polynomials in $\ell$
variables with rational coefficients. 
\paull{We write $e_1$ for the first standard basis vector in $\RR^\ell$.}
Let then $\paul{\tau} : \RR^\ell \to \RR$ be a smooth function with 
\[
\paul{\tau}(x + 3m \paull{\cdot e_1}) = u_m(x) \quad \text{for every } x \in B^\ell, \ m \in \NN.
\]
We show that $\paul{\tau}$ has the desired property. 
To this end, let $d,n,r \in \NN$ with $d > \ell$, $1 \leq q \leq p \leq \infty$ and $f \in \sob$ be arbitrary. 
We use \Cref{thm:pqstatement} and obtain the existence of a positive constant $C_1 = C_1(d,\ell,p,q,r)>0$, polynomials $Q_1, \dots, Q_n \in \polyy(\RR^\ell)$ and matrices $B_1, \dots, B_n \in \RR^{\ell \times d}$
with 
\[
\mnorm{f(x) - \sum_{k=1}^n Q_k(B_k x)}_{L^q(B^d)} \leq C_1 \cdot n^{-r/(d-\ell)}.
\]
Note that the choice of the matrices $B_k$ does \emph{not} depend on the choice of $f$ according to \Cref{thm:pqstatement}.
Let $k \in \{1,\dots,n\}$.
If $B_k \neq 0$, we set\footnote{
For a matrix $M \in \RR^{\ell \times d}$, we let $\mnorm{M}_{\ell^2 \to \ell^2} \defeq \underset{x \in B^d}{\sup} \twonorm{Mx}$.}
$A_k \defeq B_k / \mnorm{B_k}_{\ell^2 \to \ell^2}$
and $P_k(x) \defeq Q_k(\mnorm{B_k}_{\ell^2 \to \ell^2} \cdot x)$.
If $B_k = 0$, set $A_k \defeq B_k$ and $P_k \defeq Q_k$.
By construction, we have 
\[
\sum_{k=1}^n Q_k(B_k x) = \sum_{k=1}^n P_k(A_k x).
\]
Note that we then have $A_k x  \in B^\ell$ for every $x \in B^d$.
Due to the density of the $u_m$, we can for every $k \in \{1,\dots,n\}$ pick a number $m_k \in \NN$ such that 
\paull{
\[
\mnorm{P_k - u_{m_k}}_{L^\infty(B^\ell)}\leq n^{-r/(d-\ell)} \cdot n^{-1} \cdot \left({\metalambda}^d(B^d)\right)^{-1/q}
\]
and hence
}
\[
\mnorm{P_k(A_k x) - u_{m_k}(A_k x)}_{L^q(B^d)} \leq n^{-r/(d-\ell)} \cdot n^{-1}. 
\]
Then, because of $\paul{\tau}(A_k x + 3m_k \paull{\cdot e_1}) = u_{m_k}(A_k x)$ for $x \in B^d$, we get that 
\begin{align*}
\mnorm{f(x) - \sum_{k=1}^n \!\paul{\tau}(A_k x + 3m_k\paull{\cdot e_1})}_{L^q(B^d)} \!\!&\leq \mnorm{f(x) - \sum_{k=1}^n \! P_k(A_k x)}_{L^q(B^d)}
\!\!\!\!+ \sum_{k=1}^n \! \mnorm{P_k(A_k x) - u_{ m_k}(A_kx)}_{L^q(B^d)} \\
&\leq (C_1 + 1) \cdot n^{-r/(d- \ell)}.
\end{align*}
Hence, the claim follows by letting $C \defeq C_1 + 1$.
\end{proof}
\subsection{Complex-valued neural networks}\label{subsec:cvnn}
The goal of this subsection is to prove the sharp approximation rate of $n^{-r/(2d-2)}$ for the approximation of (complex-valued) Sobolev functions on the unit ball
\paull{in $\CC^d$}
using shallow CVNNs with $d$ input neurons, $n$ hidden-layer neurons and locally \paull{integrable} activation function; see below for precise definitions. 
Establishing a \emph{lower} bound of $n^{-r/(2d-2)}$ is a direct consequence of \Cref{corr:low_final} and can be found in \Cref{corr:lowbound}.  
Showing that there exists an activation function for which the \emph{upper} bound of $n^{-r/(2d-2)}$ can be attained 
is more difficult and in particular requires the translation of several results from \cite[Section~5]{pinkus_ridge_2016}
to the complex-valued setting; see \Cref{thm:cvnnup}.

In order to establish our approximation bounds for shallow CVNNs, we first introduce some new notation.
For $d \in \NN$, we let 
\[
B^d(\CC) \defeq \left\{ z \in \CC^d : \ \twonorm{z} \leq 1\right\}, \quad \text{where } \twonorm{z} \defeq \sqrt{\sum_{j=1}^d \RE(z_j)^2 + \IM(z_j)^2}.
\]
Moreover, for $p \in [1,\infty]$, we let 
\[
\sobbb \defeq \left\{ f: B^d(\CC) \to \CC: \ \RE(f), \IM(f) \in \mathcal{B}(W^{r,p}_{2d})\right\},
\]
where we canonically identify $\CC^d \cong \RR^{2d}$ (and thus $B^d(\CC) \cong B^{2d}$).

We first derive a lower bound for the approximation of Sobolev functions using shallow CVNNs as introduced above. 
This is a \paull{relatively straightforward} consequence of \Cref{corr:low_final}
and may be viewed as a special case of \Cref{corr:gtn_low}.
\begin{corollary}\label{corr:lowbound}
Let $d,r \in \NN$ with $d \geq 2$ and $p,q \in [1,\infty]$ be arbitrary. 
Then there exists a positive constant $c = c(d,p,q,r)>0$ with the following property:
For any $n \in \NN$ there exists a function $f \in \sobbb$ such that for any (complex) activation function $\phi \in L^{1}_{\mathrm{loc}}(\CC; \CC)$ we have 
\[
\inf_{\Phi \in \nn} \ \mnorm{f - \Phi}_{L^q(B^d(\CC))} \geq c \cdot n^{-r/(2d-2)}.
\]
\end{corollary}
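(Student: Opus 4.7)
The plan is to reduce the statement to \Cref{corr:low_final} applied with dimension $2d$ and activation dimension $\ell=2$, using the canonical identification $\CC^d\cong\RR^{2d}$.

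First, I would apply \Cref{corr:low_final} with $2d$ in place of $d$ and $\ell=2$ (valid since $2<2d$ because $d\geq 2$) to obtain a constant $c_0=c_0(d,p,q,r)>0$ such that for every $n\in\NN$ there exists $g\in\mathcal{B}(W_{2d}^{r,p})$ with
\[
\inf_{R\in R_{n,2d,2}}\mnorm{g-R}_{L^q(B^{2d})}\geq c_0\cdot n^{-r/(2d-2)}
\]
(if necessary, replacing $c_0$ by $c_0/2$ to pass from the supremum in \Cref{corr:low_final} to a specific witness). Define $f\colon B^d(\CC)\to\CC$ by $f(z):=g(x)$, where $x\in B^{2d}$ is the real representation of $z$ under the identification $\CC^d\cong\RR^{2d}$. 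Since $\RE(f)=g\in\mathcal{B}(W_{2d}^{r,p})$ and $\IM(f)\equiv 0\in\mathcal{B}(W_{2d}^{r,p})$, we have $f\in\sobbb$.

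The key step is to verify that for every $\phi\in L^1_{\mathrm{loc}}(\CC;\CC)$ and every $\Phi\in\nn$, the function $\RE(\Phi)$, viewed as a real-valued function on $B^{2d}$, belongs to $R_{n,2d,2}$. Writing $\Phi(z)=\sum_{k=1}^n\gamma_k\phi(\alpha_k^T z+\beta_k)$, each summand takes, after real parts and the identification $\CC\cong\RR^2$, the form $x\mapsto\tilde h_k(A_k x)$, where $A_k\in\RR^{2\times 2d}$ is the real block matrix of the form~\eqref{eq:struc} representing $z\mapsto\alpha_k^T z$, and $\tilde h_k(y):=\RE(\gamma_k\phi(y+\beta_k))$ is a measurable function on $\RR^2$ which inherits local integrability from $\phi$. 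A direct inspection of~\eqref{eq:struc} shows $\rk(A_k)\in\{0,2\}$: whenever $\alpha_k\neq 0$, some index $j$ yields a $2\times 2$ block of determinant $a_j^2+b_j^2>0$, making the rank $2$. Thus \Cref{prop:rank} (rank-$2$ case) and triviality (zero case) place each ridge summand in $L^1(B^{2d})$, so indeed $\RE(\Phi)\in R_{n,2d,2}$.

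Finally, combining the pointwise inequality $|w|\geq|\RE w|$ for $w\in\CC$ with the bound for $g$ yields
\[
\mnorm{f-\Phi}_{L^q(B^d(\CC))}\geq\mnorm{\RE(f)-\RE(\Phi)}_{L^q(B^{2d})}=\mnorm{g-\RE(\Phi)}_{L^q(B^{2d})}\geq c_0\cdot n^{-r/(2d-2)},
\]
completing the proof with $c:=c_0$. The argument is essentially a direct translation of \Cref{corr:low_final} into the complex setting, and the only (minor) obstacle is the structural verification $\RE(\Phi)\in R_{n,2d,2}$, which reduces to the rank-$2$ property of the block form~\eqref{eq:struc} and the local integrability of the ridge summands via \Cref{prop:rank}.
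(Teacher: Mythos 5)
Your proof is correct and follows essentially the same route as the paper: reduce to \Cref{corr:low_final} in dimension $2d$ with $\ell=2$ by passing to real parts, checking that the block matrices \eqref{eq:struc} have rank $0$ or $2$ so that \Cref{prop:rank} gives the required $L^1$-membership. The only (harmless) difference is bookkeeping: you keep $\RE(\gamma_k\phi(\alpha_k^Tz+\beta_k))$ as a \emph{single} $2$-variate ridge function per neuron, obtaining $\RE(\Phi)\in R_{n,2d,2}$, whereas the paper splits $\RE\phi$ and $\IM\phi$ into two ridge functions, lands in $R_{2n,2d,2}$, and absorbs the resulting factor $2^{-r/(2d-2)}$ into the constant.
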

\begin{proof}
Let $\phi \in L^{1}_{\mathrm{loc}}(\CC; \CC)$ and $\Phi \in \nn$ be given. 
Then there exist $\alpha_1, \dots, \alpha_n \in \CC^d$ and $\beta_1, \dots, \beta_n, \gamma_1, \dots, \gamma_n \in \CC$ such that 
\[
\Phi(z) =\sum_{k=1}^n \gamma_k \phi(\alpha_k^T z + \beta_k) \quad \text{for every } z\in \CC^d.
\]
We note that
\begin{align*}
\RE(\Phi(z)) &= \sum_{k=1}^n \RE \left(\gamma_k \phi(\alpha_k^T z + \beta_k) \right) 
= \sum_{k=1}^n \left[\RE(\gamma_k)\cdot  (\RE \phi)(\alpha_k^T z + \beta_k) - \IM(\gamma_k) \cdot (\IM \phi)(\alpha_k^T z + \beta_k)\right].
\end{align*}
The goal is to show $\RE(\Phi) \in \mathcal{R}_{2n,2d,2}$ by identifying $\CC^d$ with $\RR^{2d}$.
According to the computation above, this is satisfied if 
\[
(\mathrm{Part} \ \phi) (\alpha_k^T \cdot \bullet + \beta_k) \in L^{1}(B^d(\CC)),
\]
for any $\mathrm{Part} \in \{\RE, \IM\}$ and $k \in \{1,\dots,n\}$. 
Since $\phi \in L^{1}_{\mathrm{loc}}(\CC)$, we get 
\[
(\mathrm{Part} \ \phi)(\bullet + \beta_k) \in  L^{1}_{\mathrm{loc}}(\RR^2),
\]
by identifying $\CC$ with $\RR^2$. 
Moreover, if $\alpha_k \neq 0$, it follows that the associated real-valued matrix $A \in \RR^{2 \times 2d}$ (see \eqref{eq:struc}) has full rank 2.
Hence, in this case we get 
\[
(\mathrm{Part} \ \phi) (\alpha_k^T \cdot \bullet + \beta_k) \in L^{1}(B^d(\CC))
\]
by \Cref{prop:rank}.
Conversely, if $\alpha_k = 0$, the map $(\mathrm{Part} \ \phi) (\alpha_k^T \cdot \bullet + \beta_k)$ is constant and therefore trivially contained in $L^{1}(B^d(\CC))$.
This proves $\RE(\Phi) \in \mathcal{R}_{2n,2d,2}$.

Hence, according to \Cref{corr:low_final}, we conclude the existence of a real-valued function $f \in \mathcal{B}(W^{r,p}_{2d})$ and a constant $\tilde{c} = \tilde{c}(r,d,p,q)> 0 $ such that 
\[
\underset{\phi \in L^{1}_{\mathrm{loc}}(\CC)}{\inf} \ \underset{\Phi \in \nn}{\inf} \ \mnorm{f - \RE(\Phi)}_{L^q(B^d(\CC))} \geq \tilde{c} \cdot (2n)^{-r/(2d-2)} 
= c \cdot n^{-r/(2d-2)},
\]
by letting $c \defeq 2^{-r/(2d-2)} \cdot \tilde{c}$.
The claim is then obtained by noting that 
\[
\mnorm{f - \RE(\Phi)}_{L^q(B^d(\CC))} =  \mnorm{\RE(f - \Phi)}_{L^q(B^d(\CC))}\leq \mnorm{f - \Phi}_{L^q(B^d(\CC))} \quad \text{for every } \Phi \in \nn. \qedhere
\]
\end{proof}

\newcommand{\psc}{\paull{\mathcal{P}}_s(\CC^d)}
\newcommand{\psth}{\paull{\mathcal{P}}^h_{s,t}(\CC^d)}
\newcommand{\pssh}{\paull{\mathcal{P}}^h_{s,s}(\CC^d)}
\newcommand{\ppsth}{\paull{\mathcal{P}}^h_{s',t'}(\CC^d)}
\newcommand{\wirt}{\partial_{\mathrm{wirt}}}
\newcommand{\wirtq}{\overline{\partial}_{\mathrm{wirt}}}

It remains to show that there exists a complex activation function $\phi:\CC \to \CC$ for which the rate of $n^{-r/(2d-2)}$ (which is proven to be optimal according
to \Cref{corr:lowbound}) can indeed be attained. 
The activation function for which we will show that it achieves the desired approximation rate is the same ``piecewise'' activation
 function that was already constructed in \cite[Lemma~F.4]{geuchen2024optimal},
where an approximation rate of $n^{-r/(2d-1)}$ has been proven (see \cite[Theorem~4.2]{geuchen2024optimal}).
This function is constructed following the same idea as in \Cref{thm:gtn_upper}. 
The main reason why the approach from \Cref{thm:gtn_upper} cannot be used \paull{directly} to obtain the desired result was already discussed at the beginning of the section:
while in the definition of the set \paull{$\mathcal{NN}^\tau_{2d,2,n}$} the matrices $A_k$ are arbitrary, they are restricted to a specific structure when considering CVNNs. 
Specifically, it is not straightforward to show that in the case $\ell = 2$ one can pick the matrices appearing in the proof of \Cref{prop:up_1} 
to have the structure considered in \eqref{eq:struc}.

Therefore, we translate several results from \cite[Section~5]{pinkus_ridge_2016} to the case of complex polynomials in $z$ and $\overline{z}$.
Here, we make use of the \emph{Wirtinger Calculus} which we briefly discuss here. 
For a function $f \in C^1(U;\CC)$ with an open set $U \subseteq \CC$ and where $C^1$ refers to differentiability with respect to real variables, we define for $w = x + \ii y \in U$ \paull{(with $x,y \in \RR$)}
the Wirtinger derivatives at $w$ as 
\begin{align*}
\wirt f(w) &\defeq \frac{\partial f}{\partial z}(w) \defeq \frac{1}{2} \left(\frac{\partial f}{\partial x}(w) - \ii \cdot \frac{\partial f}{\partial y}(w)\right) \\
\wirtq f(w) &\defeq \frac{\partial f}{\partial \overline{z}}(w) \defeq \frac{1}{2} \left(\frac{\partial f}{\partial x}(w) + \ii \cdot \frac{\partial f}{\partial y}(w)\right).
\end{align*}
The intuition behind the Wirtinger derivatives is to formally treat $z$ and $\overline{z}$ as \emph{independent} variables and to then take derivatives only with respect to $z$ or $\overline{z}$.
For multiindices $\kk, \elll \in \NN_0^d$ we write $\wirt^{\kk}\wirtq^{\elll}$ for iterated multivariate Wirtinger derivatives according to the multiindices $\kk$ and $\elll$.
This is well-defined when applied to functions of sufficient regularity, since Wirtinger derivatives commute \paull{because they are} linear combinations of partial derivatives.  

For $s,t \in \NN_0$ and $d \in \NN$ we define
\[
\psc \defeq \left\{ \CC^d \ni z \mapsto \underset{\abs{\kk}, \abs{\elll}\leq s}{\sum_{\kk, \elll \in \NN_0^d}} 
a_{\kk, \elll} z^{\kk} \overline{z}^{\elll}: \ a_{\kk, \elll} \in \CC \right\}
\]

and 

\[
\psth \defeq \left\{\CC^d \ni z \mapsto \underset{\abs{\kk} = s, \abs{\elll} = t}{\sum_{\kk, \elll \in \NN_0^d}} 
a_{\kk, \elll} z^{\kk} \overline{z}^{\elll}: \ a_{\kk, \elll} \in \CC \right\}.
\]
We remark that the condition $\abs{\kk}, \abs{\elll} \leq s$ appearing in the definition of $\psc$ is different from $\abs{\kk} + \abs{\elll} \leq s$, which would be the direct generalization 
of the definition in the real case to the complex case. 
Clearly, it holds that
\begin{equation}
\psc =  \bigoplus_{s', t' \leq s} \ppsth.
\end{equation}

For a complex polynomial $Q \in \psc$ with 
\[
Q(z) = \underset{\abs{\kk}, \abs{\elll}\leq s}{\sum_{\kk, \elll \in \NN_0^d}}
a_{\kk, \elll} z^{\kk} \overline{z}^{\elll},
\]
we define the associated differential operator as 
\[
Q(D) \defeq  \underset{\abs{\kk}, \abs{\elll}\leq s}{\sum_{\kk, \elll \in \NN_0^d}} 
a_{\kk, \elll} \wirt^{\kk} \wirtq^{\elll},
\]
where the notations $\wirt$ and $\wirtq$ refer to the (multivariate) Wirtinger derivatives mentioned above.
A computation shows that for multiindices $\kk, \elll, \kk', \elll' \in \NN_0^d$ with 
$\abs{\kk} = \abs{\kk'}$ and $\abs{\elll} = \abs{\elll'}$ we get
\begin{equation}\label{eq:ts1}
\wirt^{\kk}\wirtq^{\elll}(z^{\kk'}\overline{z}^{\elll'}) = \mathbbm{1}_{(\kk, \elll) = (\kk', \elll') } \cdot \kk ! \cdot \elll !.
\end{equation}
We refer to \Cref{lem:wirtiden1} \paull{in \Cref{sec:wirt_proofs}} for a rigorous proof of that identity. 
Let $\mathcal{L}(\psth; \CC)$ denote the space of $\CC$-linear maps from $\psth$ to $\CC$.
According to \eqref{eq:ts1}, the set 
\[
\left\{\wirt^{\kk}\wirtq^{\elll}: \ \kk,  \elll \in \NN_0^d \text{ with } \abs{\kk} = s \text{ and } \abs{\elll} = t\right\}
\]
forms a basis of $\mathcal{L}(\psth; \CC)$, so we may conclude that 
\begin{equation}\label{eq:linear_pol}
\mathcal{L}(\psth; \CC) = \{Q(D): \ Q \in \psth\}.
\end{equation}
Moreover, for a fixed vector $a \in \CC^d$ and multiindices $\kk, \elll \in \NN_0^d$ with $\abs{\kk}=s$ and $\abs{\elll} = t$
for $s,t \in \NN_0$, we get 
\begin{equation}\label{eq:ts2}
\wirt^{\kk} \wirtq^{\elll}\left((a^T z)^s(\overline{a^T z})^{t}\right) = s! \cdot t! \cdot a^{\kk} \overline{a}^{\elll}.
\end{equation}
Here, we refer to \Cref{lem:wirtiden2} for a proof of this fact. 
Hence, for $Q \in \psth$, we observe 
\begin{equation}\label{eq:vanish}
Q(D) \left((a^T z)^s(\overline{a^T z})^{t}\right) = s! \cdot t! \cdot Q(a) \quad \text{for all } a\in \CC^d.
\end{equation}
\paull{
Moreover, we note that 
\begin{align}
(a^T z)^s(\overline{a^T z})^{t} &= \left(\sum_{j=1}^n a_j z_j\right)\left(\sum_{j=1}^n \overline{a_j} \overline{z_j}\right) \nonumber \\
\label{eq:hom_pol}
&= \left(\sum_{j_1, \dots, j_s=1}^n a_{j_1} \cdots a_{j_s} \cdot z_{j_1} \cdots z_{j_s}\right)\left(\sum_{j_1, \dots, j_s=1}^n \overline{a_{j_1}} \cdots \overline{a_{j_s}} \cdot \overline{z_{j_1}} \cdots \overline{z_{j_s}}\right) \in \psth
\end{align}
for every $a \in \CC^d$.
}
This leads to the following proposition, which is a generalization of \cite[Proposition~5.1]{pinkus_ridge_2016} to the complex-valued setting.
\begin{proposition}\label{prop:firstcharac}
Let $\Omega \subseteq \CC^d$, $s,t \in \NN_0$ and $P \in \psth$.
Then we have 
\[
P \in \spann_\CC \left\{ z \mapsto (a^T z)^s (\overline{a^T z})^t : \ a \in \Omega\right\}
\]
if and only if $Q(D) P = 0$ for every $Q \in \psth$ that vanishes on $\Omega$.
\end{proposition}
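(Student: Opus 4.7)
The plan is to reduce the claim to a biorthogonality argument for a non-degenerate bilinear form on the finite-dimensional space $\psth$. Set
\[
V \defeq \spann_\CC\bigl\{z \mapsto (a^T z)^s (\overline{a^T z})^t : a \in \Omega\bigr\} \subseteq \psth
\]
and consider the pairing
\[
B:\ \psth \times \psth \to \CC, \qquad B(q, p) \defeq q(D)p.
\]
By \eqref{eq:ts1}, this pairing really does land in $\CC$, because the bi-degrees of the Wirtinger operator and of the monomial match, so all nonconstant terms vanish. The same identity shows that the monomial basis $\{z^{\kk}\overline{z}^{\elll}\}_{\abs{\kk}=s,\,\abs{\elll}=t}$ of $\psth$ is (up to the factors $\kk!\,\elll!$) dual to $\{\wirt^{\kk}\wirtq^{\elll}\}_{\abs{\kk}=s,\,\abs{\elll}=t}$ under $B$, so $B$ is non-degenerate. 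This is essentially the content of \eqref{eq:linear_pol}, which I would just invoke.

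Next I would use \eqref{eq:vanish} to identify the left annihilator of $V$. For every $q \in \psth$ and every $a \in \Omega$ one has
\[
B\bigl(q,\ z \mapsto (a^T z)^s (\overline{a^T z})^t\bigr) = s!\, t!\, q(a),
\]
so a polynomial $q \in \psth$ lies in $V^\perp \defeq \{q \in \psth : B(q, v) = 0 \text{ for all } v \in V\}$ if and only if $q$ vanishes on $\Omega$.

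With this setup the ``only if'' direction is immediate: if $p \in V$ and $q \in \psth$ vanishes on $\Omega$, then $q \in V^\perp$, so $q(D)p = B(q, p) = 0$. For the ``if'' direction, the hypothesis states exactly that $p$ lies in the right biorthogonal complement of $V^\perp$ under $B$. A dimension count using the non-degeneracy of $B$ on $\psth$ yields $\dim V^\perp = \dim \psth - \dim V$, hence the right biorthogonal complement of $V^\perp$ has dimension $\dim V$ and contains $V$, so it equals $V$. This gives $p \in V$ and finishes the proof.

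The only step requiring real care is the non-degeneracy of $B$ together with the bi-degree bookkeeping that ensures $q(D)p \in \CC$ rather than a higher-degree polynomial; once this Wirtinger-calculus translation of the real-variable setup from \cite[Proposition~5.1]{pinkus_ridge_2016} is in place, the remaining argument is a routine finite-dimensional duality computation and I would not expect any analytical obstacles.
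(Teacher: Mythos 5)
Your proof is correct and follows essentially the same route as the paper: the paper invokes \eqref{eq:linear_pol} together with the elementary fact that a subspace of the finite-dimensional space $\psth$ equals the double annihilator of itself, and then uses \eqref{eq:vanish} to identify the annihilating functionals with polynomials vanishing on $\Omega$. Your non-degenerate bilinear form $B(q,p)=q(D)p$ and the dimension count merely make that linear-algebra step explicit, so no further comparison is needed.
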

\begin{proof}
Let 
\[
V \defeq \spann_\CC\left\{z \mapsto (a^T z)^s (\overline{a^T z})^t : \ a \in \Omega\right\} \overset{\eqref{eq:hom_pol}}{\subseteq} \psth.
\]
Then, from elementary linear algebra, for $P \in \psth$ we have $P \in V$ if and only if $L(P) = 0$ for every $L \in  \mathcal{L}(\psth; \CC)$
with $\fres{L}{V} \equiv 0$.
From \eqref{eq:linear_pol}, we infer that this is equivalent to $Q(D)P = 0$ for every $Q \in \psth$ with 
$\fres{Q(D)}{V} \equiv 0$.
But from \eqref{eq:vanish} we get that $\fres{Q(D)}{V} \equiv 0$ if and only if $\fres{Q}{\Omega} \equiv 0$.
This proves the claim. 
\end{proof}

This immediately gives us the following characterization, which is the generalization of \cite[Corollary~5.11]{pinkus_ridge_2016} to the complex-valued case. 
\begin{proposition}\label{prop:spancharac}
Let $\Omega \subseteq \CC^d$ and $s,t \in \NN_0$.
Then we have
\[
\psth = \spann_\CC \left\{ z \mapsto (a^T z)^s (\overline{a^T z})^t : \ a \in \Omega\right\}
\]
if and only if for every $Q \in \psth$ we have 
\[
\fres{Q}{\Omega} \equiv 0 \quad \Rightarrow \quad Q \equiv 0.
\]
\end{proposition}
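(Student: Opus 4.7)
The plan is to deduce this characterization as a direct corollary of \Cref{prop:firstcharac}, which already provides an "inner" description of the span in terms of annihilators. The proof will amount to applying \Cref{prop:firstcharac} in both directions, combined with the fact that the assignment $q \mapsto q(D)$ is injective on $\psth$, a fact that is implicit in the material preceding \eqref{eq:linear_pol}. I expect no serious obstacle: this is essentially the complex-valued counterpart of \cite[Corollary~5.11]{pinkus_ridge_2016}, and the Wirtinger identities \eqref{eq:ts1} and \eqref{eq:vanish} have been set up precisely so that the argument runs as in the real case.

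For the "$\Leftarrow$" direction I would take an arbitrary $p \in \psth$ and show it lies in the span
\[
V \defeq \spann_\CC \left\{ z \mapsto (a^T z)^s (\overline{a^T z})^t : \ a \in \Omega\right\}.
\]
By \Cref{prop:firstcharac}, this reduces to verifying $q(D)p = 0$ for every $q \in \psth$ with $\fres{q}{\Omega} \equiv 0$. But by hypothesis, the only such $q$ is the zero polynomial, whose associated differential operator is the zero operator, so the condition is trivially satisfied and hence $p \in V$. Since $p$ was arbitrary, $\psth = V$.

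For the "$\Rightarrow$" direction I would start from a polynomial $q \in \psth$ with $\fres{q}{\Omega} \equiv 0$ and argue that $q \equiv 0$. By \eqref{eq:vanish}, for every $a \in \Omega$ one has
\[
q(D) \bigl[(a^T z)^s (\overline{a^T z})^t\bigr] = s!\, t!\, q(a) = 0.
\]
Since $q(D)$ is $\CC$-linear and, by the assumed span equality, $\psth$ is spanned by the polynomials $z \mapsto (a^T z)^s (\overline{a^T z})^t$ with $a\in\Omega$, it follows that $q(D)$ vanishes identically on $\psth$. Finally, the injectivity of the map $q \mapsto q(D)$ (which follows from \eqref{eq:ts1} by testing on each basis monomial $z^{\kk'}\overline{z}^{\elll'}$, forcing each coefficient of $q$ to be zero) gives $q \equiv 0$, as desired.

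The only step that merits a sentence of justification is the injectivity of $q \mapsto q(D)$: if $q(D) \equiv 0$ on $\psth$, then evaluating $q(D)$ on each monomial $z^{\kk'}\overline{z}^{\elll'}$ with $\abs{\kk'}=s$, $\abs{\elll'}=t$ and invoking \eqref{eq:ts1} yields $a_{\kk',\elll'}\cdot\kk'!\cdot\elll'!=0$ for every multiindex pair, hence $q=0$. With this observation, both implications follow almost immediately from \Cref{prop:firstcharac}.
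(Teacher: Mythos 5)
Your proof is correct and follows essentially the same route as the paper: both arguments reduce the claim to \Cref{prop:firstcharac} together with the injectivity of $q \mapsto q(D)$ on $\psth$, the only cosmetic difference being that the paper verifies this injectivity by evaluating $q(D)$ on the functions $z \mapsto (a^Tz)^s(\overline{a^Tz})^t$ via \eqref{eq:vanish}, whereas you test on the monomials $z^{\kk'}\overline{z}^{\elll'}$ via \eqref{eq:ts1}.
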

\begin{proof}
We again let 
\[
V \defeq \spann_\CC\left\{ z  \mapsto (a^T z)^s (\overline{a^T z})^t : \ a \in \Omega\right\} \overset{\eqref{eq:hom_pol}}{\subseteq} \psth.
\]
According to \Cref{prop:firstcharac} we have $\psth = V$ if and only if 
\[
\fall P,Q \in \psth: \quad \fres{Q}{\Omega} \equiv 0 \quad \Rightarrow \quad Q(D)P = 0. 
\]
We reformulate the latter to 
\[
\fall Q \in \psth: \quad \fres{Q}{\Omega} \equiv 0 \quad \Rightarrow \quad \left(Q(D)P = 0 \quad \text{for all } P \in \psth\right). 
\]
But $Q(D)P = 0$ for all $P \in \psth$ holds if and only if $Q \equiv 0$, which follows for instance from \eqref{eq:vanish}.
\end{proof}
Note that the previous proposition in particular shows the following a priori not entirely obvious statement
\[
\psth = \spann_\CC \left\{ z \mapsto (a^T z)^s (\overline{a^T z})^t : \ a \in \CC^d\right\},
\]
since the only polynomial that vanishes on $\CC^d$ is the zero polynomial. 
Since the space $\psth$ is finite-dimensional, we in particular infer the existence of a set $\Omega \subseteq \CC^d$ with $\abs{\Omega} = \dim_\CC(\psth)$ and 
\begin{equation}\label{eq:spanfinite}
\psth = \spann_\CC \left\{z \mapsto  (a^T z)^s (\overline{a^T z})^t : \ a \in \Omega\right\}.
\end{equation}

The following proposition is crucial for the proof of the upper bound. 
For its real-valued analogon, we refer to \cite[Corollary~5.12]{pinkus_ridge_2016}.
\begin{proposition}\label{prop:imp}
Let $\Omega \subseteq \CC^d$ and $s,t \in \NN_0$.
If 
\[
\psth = \spann_\CC \left\{ z \mapsto (a^T z )^s (\overline{a^T z})^t : \ a \in \Omega\right\},
\]
then also 
\[
\ppsth = \spann_\CC \left\{ z \mapsto (a^T z )^{s'} (\overline{a^T z})^{t'} : \ a \in \Omega\right\}
\]
for all $s', t' \in \NN_0$ with $s' \leq s$ and $t' \leq t$.
\end{proposition}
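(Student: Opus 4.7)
The plan is to reduce the claim directly to the characterization provided by Proposition~\ref{prop:spancharac}. Applying its ``if'' direction at bi-degree $(s',t')$, it suffices to verify that every $q \in \ppsth$ with $\fres{q}{\Omega}\equiv 0$ must satisfy $q \equiv 0$. So I would fix such a $q$ and try to derive a contradiction from the hypothesis at bi-degree $(s,t)$.

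The key step is to ``lift'' $q$ to a polynomial in $\psth$ that still vanishes on $\Omega$. For this I would introduce
\[
\tilde q(z) \defeq q(z)\cdot z_1^{s-s'}\,\overline{z_1}^{\,t-t'}.
\]
Since $q$ is bi-homogeneous of bi-degree $(s',t')$, the factor $z_1^{s-s'}\overline{z_1}^{\,t-t'}$ bumps the bi-degree up to exactly $(s,t)$, so $\tilde q \in \psth$. Moreover, for every $a \in \Omega$ we have $\tilde q(a) = q(a)\cdot a_1^{s-s'}\overline{a_1}^{\,t-t'} = 0$, so $\tilde q$ vanishes on $\Omega$ as well. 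The hypothesis together with the ``only if'' direction of Proposition~\ref{prop:spancharac} at bi-degree $(s,t)$ then forces $\tilde q \equiv 0$.

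The final step is to cancel the multiplier $z_1^{s-s'}\overline{z_1}^{\,t-t'}$. I would argue this by viewing any element of $\psc$ as a genuine polynomial in $2d$ real variables via the identification $z_j = x_{2j-1} + \ii\, x_{2j}$, which gives an isomorphism between $\bigoplus_{s,t} \ppsth$ (as a space of functions) and a polynomial subring of $\CC[x_1,\dots,x_{2d}]$. Since the latter is an integral domain and $z_1^{s-s'}\overline{z_1}^{\,t-t'}$ corresponds to a nonzero element there, $\tilde q \equiv 0$ indeed forces $q \equiv 0$, completing the argument.

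I do not anticipate any real obstacle here; the only slightly non-trivial point is ensuring that the ``multiply and cancel'' step is legitimate at the level of functions on $\CC^d$, but this follows from the linear independence of the monomials $z^\kk \overline z^\elll$ as functions on $\CC^d$, which is exactly the translation to the $\CC[x_1,\dots,x_{2d}]$ viewpoint just described.
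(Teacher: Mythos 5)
Your proposal is correct and follows essentially the same route as the paper: the paper argues by contradiction, picking a nonzero $q \in \ppsth$ vanishing on $\Omega$ and multiplying by an arbitrary nonzero $\tilde q \in P^h_{s-s',t-t'}(\CC^d)$ to contradict \Cref{prop:spancharac} at bi-degree $(s,t)$, while you run the same argument contrapositively with the specific multiplier $z_1^{s-s'}\overline{z_1}^{\,t-t'}$. Your explicit justification of the cancellation step via the identification with the integral domain $\CC[x_1,\dots,x_{2d}]$ is a point the paper leaves implicit (it simply asserts $q\tilde q \neq 0$), but it is the same underlying fact.
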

\begin{proof}
\paull{
We  know that $\ppsth \overset{\eqref{eq:hom_pol}}{\supseteq} \spann_\CC \left\{ z \mapsto (a^T z )^{s'} (\overline{a^T z})^{t'} : \ a \in \Omega\right\}$. 
Suppose} that 
\[
\ppsth \supsetneq \spann_\CC \left\{ z \mapsto (a^T z )^{s'} (\overline{a^T z})^{t'} : \ a \in \Omega\right\}
\]
for some $s' \leq s$ and $t' \leq t$.
According to \Cref{prop:spancharac}, we can then pick $Q \in \ppsth$ with $Q \neq 0$
and $\fres{Q}{\Omega} \equiv 0$. Let $\tilde{Q} \in \mathcal{P}^h_{s- s', t- t'}(\CC^d) \setminus \{0\}$ be arbitrary. 
Then we have $Q \tilde{Q} \in \psth \setminus \{0\}$ and $\fres{Q \tilde{Q}}{\Omega} \equiv 0$, which contradicts \Cref{prop:spancharac}, 
\paull{since $\psth = \spann_\CC\left\{ z \mapsto (a^T z )^s (\overline{a^T z})^t : \ a \in \Omega\right\}$ by assumption of the proposition.}
\end{proof}
We can now show that each polynomial from $\psc$ can be written as the sum of (complex) ridge polynomials, where the number of summands 
depends on the dimension of the space $\pssh$.
This statement is the translation of \Cref{prop:up_1} to the complex-valued setting. 
\begin{theorem}\label{thm:ridgecomplexpol}
Let $d \in \NN$ and $s \in \NN_0$ and pick $n \in \NN$ with 
\[
\dim_\CC (\pssh) \leq n.
\]
Then there exist $a_1 ,\dots, a_n \in \CC^d$ with $\twonorm{a_j}  \leq 1$ for $j \in \{1,\dots,n\}$
with the following property:
For every $P \in \psc$ there exist $P_1 ,\dots, P_n \in \polyy_s(\CC)$ with
\[
P(z) = \sum_{j=1}^n  P_j(a_j^T z).
\]
\end{theorem}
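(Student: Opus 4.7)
My plan is to imitate the strategy of the real-valued \Cref{prop:up_1}, with the complex-variable duality theorems (\Cref{prop:firstcharac,prop:spancharac,prop:imp}) playing the role that \cite[Sec.~5]{pinkus_ridge_2016} plays in the proof of \Cref{prop:up_1}.

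First, I would produce the vectors $a_j$. \Cref{prop:spancharac} applied with $\Omega = \CC^d$ shows that
\[
\pssh = \spann_\CC \{ z \mapsto (a^T z)^s (\overline{a^T z})^s : a \in \CC^d \},
\]
since the only polynomial vanishing on all of $\CC^d$ is the zero polynomial. By finite-dimensionality of $\pssh$, I can extract a spanning subfamily $\{a_1,\dots,a_N\} \subseteq \CC^d$ of size $N \defeq \dim_\CC \pssh \leq n$. To meet the norm constraint, I would rescale each nonzero $a_j$ to have unit Euclidean norm --- this only multiplies the associated generator $(a_j^T z)^s (\overline{a_j^T z})^s$ by the nonzero scalar $\mnorm{a_j}_2^{-2s}$, so the spanning property is retained --- and then pad by zero vectors to reach length $n$ (these trivially satisfy $\mnorm{0}_2 \leq 1$ and do not affect the span).

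With these vectors in hand, I would apply \Cref{prop:imp} with $\Omega = \{a_1,\dots,a_n\}$ to propagate the spanning identity to every lower bidegree: for each $s', t' \in \{0, 1, \dots, s\}$,
\[
\ppsth = \spann_\CC \{ z \mapsto (a_j^T z)^{s'} (\overline{a_j^T z})^{t'} : 1 \leq j \leq n \}.
\]
Given an arbitrary $p \in \psc$, I decompose it according to $\psc = \bigoplus_{s',t' \leq s} \ppsth$ into homogeneous components $p_{s',t'} \in \ppsth$, express each $p_{s',t'}(z)$ as $\sum_{j=1}^n c_{j,s',t'} (a_j^T z)^{s'} (\overline{a_j^T z})^{t'}$ with suitable $c_{j,s',t'} \in \CC$, and then collect terms by $j$: the univariate polynomials
\[
p_j(w) \defeq \sum_{s',t' \leq s} c_{j,s',t'}\, w^{s'} \overline{w}^{t'}
\]
lie in $P_s(\CC)$ and satisfy $\sum_{j=1}^n p_j(a_j^T z) = p(z)$ after interchanging the two finite sums.

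I expect no substantive obstacle: the conceptual work has already been done by \Cref{prop:firstcharac,prop:spancharac,prop:imp}, which port the Pinkus duality between polynomials and constant-coefficient differential operators to the Wirtinger setting. The only point worth a sentence of care is the normalization $\mnorm{a_j}_2 \leq 1$, which survives unscathed because under $a \mapsto \lambda a$ with $\lambda \in \CC \setminus \{0\}$ the generator $(a^T z)^{s'} (\overline{a^T z})^{t'}$ is scaled by the nonzero factor $\lambda^{s'} \overline{\lambda}^{t'}$, so the span at each fixed bidegree is preserved.
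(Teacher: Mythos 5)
Your proposal is correct and follows essentially the same route as the paper: pick vectors $a_1,\dots,a_n$ spanning $\pssh$ via the bidegree-$(s,s)$ generators (rescaling to unit norm, which is harmless since scaling a generator by a nonzero constant preserves the span), propagate to all lower bidegrees with \Cref{prop:imp}, decompose $p$ along $\psc=\bigoplus_{s',t'\leq s}\ppsth$, and collect terms by $j$. The only cosmetic difference is that you make the padding by zero vectors explicit, which the paper leaves implicit.
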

\begin{proof}
We pick $a_1,\dots, a_n \in \CC^d$ with $\twonorm{a_j} \leq 1$ for $j \in \{1,\dots,n\}$ such that 
\[
\pssh = \spann_\CC \left\{ z \mapsto (a_j^T z)^s (\overline{a_j^T z})^s : \ j = 1,\dots,n\right\}.
\]
This is possible according to \eqref{eq:spanfinite}.
Note that we can scale the $a_j$ as we want since scaling does not change the span above. 
Let $P \in \psc$ be arbitrary. 
Then we can write 
\[
P(z) = \sum_{s', t' \leq s} Q_{s', t'}(z)
\]
with $Q_{s',t'} \in \ppsth$ for every $s',t' \leq s$.
According to \Cref{prop:imp}, we can write
\[
Q_{s', t'}(z) = \sum_{j=1}^n a_{s',t',j}\cdot  (a_j^T z)^{s'} (\overline{a_j^T z})^{t'}
\]
with suitable coefficients $a_{s',t',j} \in \CC$.
This gives us
\[
P(z) = \sum_{s', t' \leq s} \sum_{j=1}^n a_{s',t',j}\cdot  (a_j^T z)^{s'} (\overline{a_j^T z})^{t'}
=  \sum_{j=1}^n \sum_{s', t' \leq s} a_{s',t',j}\cdot  (a_j^T z)^{s'} (\overline{a_j^T z})^{t'} 
= \sum_{j=1}^n P_j(a_j^T z)
\]
with 
\[
P_j (z) \defeq \sum_{s', t' \leq s} a_{s',t',j}\cdot  z^{s'}\overline{z}^{t'}.
\]
Since clearly $P_j \in \polyy_s(\CC)$ for every $j \in \{1,\dots,n\}$, the claim is shown. 
\end{proof}

The activation function that yields the optimal approximation rate is obtained in the following lemma.
We refer to \cite{geuchen2024optimal} for the proof.
\begin{lemma}[{cf. \cite[Lemma~F.4]{geuchen2024optimal}}] \label{lem:acti}
Let $\left\{ u_\ell\right\}_{\ell = 1}^\infty$ be an enumeration of the set of complex polynomials in $z$ and $\overline{z}$ with coefficients in $\QQ + \ii \QQ$.
    Then there exists a smooth function $\phi: \  \CC \to \CC$ (where by ``smoothness'' we refer to smoothness with respect to real variables) with the property that 
    for every $\ell \in \NN$ and $z \in [-1,1] + \ii \cdot [-1,1] \subseteq \CC$ one has
        \begin{equation*}
            \phi(z+3\ell) = u_\ell(z).
        \end{equation*}
\end{lemma}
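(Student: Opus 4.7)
The plan is to build $\phi$ by a simple gluing construction using smooth bump functions, exploiting the fact that the translates $3\ell + [-1,1] + \ii[-1,1]$ for $\ell \in \NN$ are mutually disjoint and in fact separated by distance $1$.

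First I would fix a smooth function $\psi : \RR \to [0,1]$ with $\psi \equiv 1$ on $[-1,1]$ and $\supp(\psi) \subseteq [-3/2, 3/2]$; such a $\psi$ is standard. Then I define the two-dimensional cut-off
\[
\Psi : \CC \to [0,1], \qquad \Psi(x + \ii y) \defeq \psi(x)\,\psi(y),
\]
which (identifying $\CC \cong \RR^2$) is smooth in the real-variable sense, satisfies $\Psi \equiv 1$ on the square $Q \defeq [-1,1] + \ii[-1,1]$ and is supported in $Q' \defeq [-3/2,3/2] + \ii[-3/2,3/2]$.

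Next I would set
\[
\phi(z) \defeq \sum_{\ell=1}^\infty \Psi(z - 3\ell)\, u_\ell(z - 3\ell), \qquad z \in \CC.
\]
The crucial point is that the shifted supports $3\ell + Q'$ are pairwise disjoint: the centers are spaced at distance $3$, while each $Q'$ has radius $3/2$ in the $\ell^\infty$-sense, so consecutive supports only touch along a measure-zero edge, and in particular every point of $\CC$ has an open neighborhood intersecting at most finitely many (in fact at most two) of the sets $3\ell + Q'$. Hence the series is locally a finite sum of smooth functions, so $\phi$ is smooth. For the required identity, fix $\ell \in \NN$ and $z \in Q$; then $z + 3\ell - 3\ell' \in Q'$ only for $\ell' = \ell$, and $\Psi(z + 3\ell - 3\ell) = \Psi(z) = 1$ because $z \in Q$. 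Therefore
\[
\phi(z + 3\ell) \;=\; \Psi(z)\, u_\ell(z) \;=\; u_\ell(z),
\]
as desired.

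There is no real obstacle here: the only thing to check carefully is the disjointness of the shifted supports (which gives both smoothness via local finiteness and the exact value of $\phi$ on each square $3\ell + Q$), and the fact that $\phi$ can be taken to vanish outside $\bigcup_\ell (3\ell + Q')$, so no compatibility conditions between different $u_\ell$ need to be verified. One minor point worth stating explicitly in the write-up is that "smoothness" here means $C^\infty$ in the underlying real variables $(x,y)$, matching the convention adopted in the paper; holomorphy is neither required nor possible, since the $u_\ell$ depend on $\overline{z}$.
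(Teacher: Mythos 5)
Your construction is correct and is essentially the standard one used in the cited reference \cite[Lemma~F.4]{geuchen2024optimal} (the paper itself defers the proof there): glue the polynomials $u_\ell$ together with a smooth tensor-product bump that equals $1$ on $[-1,1]+\ii[-1,1]$ and is supported in a slightly larger square, and use local finiteness of the translated supports to get smoothness. The only nitpick is the phrase ``pairwise disjoint'' for the sets $3\ell+Q'$ — consecutive ones touch along the line $\RE(z)=3\ell+3/2$ — but as you note yourself, only local finiteness is needed, so the argument is sound.
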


Note that since $ B^1(\CC) \subseteq [-1,1] + \ii \cdot [-1,1]$, the function $\phi$ in particular satisfies 
\begin{equation}\label{eq:actiprop}
\phi(z+3\ell) = u_\ell(z) \quad \text{for every } z \in B^1(\CC).
\end{equation}
We can now state and prove the main result of this section.

\begin{theorem}\label{thm:cvnnup}
Let $\phi:\CC \to \CC$ be the activation function from \Cref{lem:acti}. 
Moreover, let $d,r\in \NN$ with $d \geq 2$ and $1 \leq q \leq p \leq \infty$. 
Then there exists a constant $C = C(d,p,q,r)> 0$ with the following property: 
For any $n \in \NN$ there exist complex vectors $\alpha_1, \dots, \alpha_n \in \CC^d$ such that for every $f \in \sobbb$
 there exists a shallow CVNN $\Phi \in \nn(\alpha_1, \dots, \alpha_n)$ with 
\[
\mnorm{f - \Phi}_{L^q(B^d)} \leq C \cdot n^{-r/(2d-2)}.
\]
\end{theorem}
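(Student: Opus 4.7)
The plan is to mirror the real-variable argument of \Cref{thm:pqstatement,thm:gtn_upper} in the complex setting, now using the complex ridge decomposition from \Cref{thm:ridgecomplexpol} in place of \Cref{prop:up_1}, together with the density property of shifts of $\phi$ from \Cref{lem:acti}. First I would bound $\dim_\CC(\pssh) = \binom{s+d-1}{d-1}^2 \leq C_1 \cdot s^{2d-2}$ for a constant $C_1 = C_1(d)$, and pick $s \in \NN$ maximal with $C_1 \cdot s^{2d-2} \leq n$, so that $s \gtrsim n^{1/(2d-2)}$. Applying \Cref{thm:ridgecomplexpol} with these values then supplies fixed vectors $\alpha_1,\ldots,\alpha_n \in \CC^d$ with $\mnorm{\alpha_j}_2 \leq 1$, independent of $f$, which will serve as the weight vectors in the claim.

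Given $f \in \sobbb$, I would apply \Cref{prop:jack} separately to $\RE f, \IM f \in \mathcal{B}(W^{r,p}_{2d})$ (using $B^d(\CC) \cong B^{2d}$) to obtain real polynomials $p^R, p^I$ on $B^{2d}$ of degree at most $s$ with $\mnorm{f - (p^R + \ii p^I)}_{L^q(B^d(\CC))} \leq C_2 \cdot s^{-r} \leq C_3 \cdot n^{-r/(2d-2)}$. Substituting $x_k = (z_k + \overline{z_k})/2$ and $y_k = (z_k - \overline{z_k})/(2\ii)$ exhibits $p \defeq p^R + \ii p^I$ as an element of $\psc$, since any real monomial $x^{\kk_1} y^{\kk_2}$ with $\abs{\kk_1} + \abs{\kk_2} \leq s$ becomes a polynomial of degree $\leq s$ in $z$ and degree $\leq s$ in $\overline{z}$. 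Next, \Cref{thm:ridgecomplexpol} yields $p_1,\ldots,p_n \in P_s(\CC)$ (i.e.\ polynomials in $z,\overline{z}$) with $p(z) = \sum_{j=1}^n p_j(\alpha_j^T z)$. Because $\mnorm{\alpha_j}_2 \leq 1$, the Cauchy--Schwarz inequality forces $\alpha_j^T z \in B^1(\CC)$ whenever $z \in B^d(\CC)$, so by the density of $\{u_\ell\}_{\ell \in \NN}$ in $C(B^1(\CC))$ underlying \Cref{lem:acti}, I can pick $\ell_j \in \NN$ with $\mnorm{p_j - u_{\ell_j}}_{L^\infty(B^1(\CC))} \leq n^{-r/(2d-2)}/n$; then the key property \eqref{eq:actiprop} gives $\phi(\alpha_j^T z + 3\ell_j) = u_{\ell_j}(\alpha_j^T z)$ on $B^d(\CC)$.

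Summing up with coefficients $\gamma_j \defeq 1$ and biases $\beta_j \defeq 3\ell_j$, the resulting network $\Phi \in \nn(\alpha_1,\ldots,\alpha_n)$ satisfies $\mnorm{f - \Phi}_{L^q(B^d(\CC))} \leq \mnorm{f - p}_{L^q(B^d(\CC))} + \sum_{j=1}^n \mnorm{p_j(\alpha_j^T \bullet) - u_{\ell_j}(\alpha_j^T \bullet)}_{L^q(B^d(\CC))} \lesssim n^{-r/(2d-2)}$, and small values of $n$ can be absorbed into the constant as at the end of \Cref{thm:pqstatement}. The hardest conceptual step, as highlighted at the start of \Cref{sec:nn}, is that a CVNN weight vector corresponds to a real $2 \times 2d$ matrix with the restrictive block structure \eqref{eq:struc}, so this statement does not follow from \Cref{thm:gtn_upper} as a special case; it is precisely the Wirtinger-based content of \Cref{thm:ridgecomplexpol}, together with the sharp dimension count $\dim_\CC(\pssh) = O(s^{2d-2})$ for the $s,s$-bihomogeneous polynomials, that fills this gap and produces the rate $n^{-r/(2d-2)}$ rather than the weaker $n^{-r/(2d-1)}$ previously obtained in \cite{geuchen2024optimal}.
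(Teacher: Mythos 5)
Your proposal is correct and follows essentially the same route as the paper's proof: the dimension count $\dim_\CC(\pssh)\leq C_1 s^{2d-2}$, the choice of maximal $s$, the Jackson-type approximation of $\RE f$ and $\IM f$, the substitution $x_k=(z_k+\overline{z_k})/2$, $y_k=(z_k-\overline{z_k})/(2\ii)$ to place the polynomial in $\psc$, the complex ridge decomposition via \Cref{thm:ridgecomplexpol}, and the density of the shifts of $\phi$ from \Cref{lem:acti}. The only cosmetic difference is that you control $\mnorm{p_j-u_{\ell_j}}_{L^\infty(B^1(\CC))}$ while the paper directly controls the $L^q(B^d(\CC))$ error of the composed functions, which changes nothing beyond an absorbable constant.
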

\begin{proof}
Let $s \in \NN$. 
Then it is easy to see that 
\[
\dim_\CC (\pssh) = \# \left\{ z^{\kk}z^{\elll}:\ \abs{\kk} = \abs{\elll} = s\right\} = \left(\# \left\{ \kk \in \NN_0^d: \ \abs{\kk} = s\right\}\right)^2 \leq C_1 \cdot \left(s^{d-1}\right)^2
= C_1 \cdot s^{2d-2},
\]
where $C_1 = C_1(d)>0$; again, see for instance \cite[Lemma~F.1]{geuchen2024optimal}
We may assume $n \geq C_1$ at the cost of possibly enlarging the constant $C$ in the end, similar to the end of the proof of \Cref{thm:main}.
We then pick $s \in \NN$ as the largest number satisfying $C_1 \cdot s^{2d-2} \leq n$.
Note that this implies 
\[
C_1 \cdot (2s)^{2d-2} > n \quad \Leftrightarrow \quad s > c_2 \cdot n^{1/(2d-2)}
\]
for a constant $c_2 = c_2(d)>0$.

Let $f \in \sobbb$ be arbitrary. 
For a multiindex $\kk \in \NN_0^{2d}$ and $z \in \CC^d$, we write 
\[
(\RE(z), \IM(z))^{\kk} \defeq \prod_{j=1}^d \RE(z_j)^{\kk_j} \cdot \IM(z_j)^{\kk_{d+j}}.
\]
By applying \Cref{prop:jack} to $\RE(f) \paull{\in \mathcal{B}(W_{2d}^{r,p})}$ and $\IM(f)\paull{\in \mathcal{B}(W_{2d}^{r,p})}$, we obtain the existence of a polynomial 
\[
P(z) = \underset{\abs{\kk} \leq s}{\sum_{\kk \in \NN_0^{2d}}} a_{\kk} \cdot (\RE(z), \IM(z))^{\kk}
\]
with 
\[
\mnorm{f - P}_{L^q(B^d(\CC))} \leq C_3 \cdot s^{-r} \leq C_4 \cdot n^{-r/(2d-2)},
\]
where $C_3 = C_3(d,p,q,r)>0$ and $C_4 = C_4(d,p,q,r)>0$ are constants and $a_{\kk} \in \CC$ for $\kk \in \NN_0^{2d}$ with $\abs{\kk} \leq s$.
For a fixed $\kk \in \NN_0^{2d}$, we compute 
\begin{align*}
(\RE(z), \IM(z))^{\kk} = \prod_{j=1}^d \RE(z_j)^{\kk_j} \cdot \IM(z_j)^{\kk_{d+j}} =  
\prod_{j=1}^d \left[ \frac{1}{2^{\kk_j} \cdot (2 \ii)^{\kk_{d+j}}} \cdot (z_j + \overline{z_j})^{\kk_j} \cdot (z_j - \overline{z_j})^{\kk_{d+j}} \right].
\end{align*}
For each $j \in \{1,\dots,d\}$ we let $Q_j(z_j) \defeq (z_j + \overline{z_j})^{\kk_j}$ and $\tilde{Q_j}(z_j) \defeq (z_j - \overline{z_j})^{\kk_{d+j}}$.
Note that $Q_j \in \polyy_{\kk_j}(\CC)$ and $\tilde{Q_j} \in \polyy_{\kk_{d+j}}(\CC)$, which yields $Q_j \tilde{Q_j} \in \polyy_{\kk_j + \kk_{d+j}}(\CC)$.
Overall, this yields 
\[
\prod_{j=1}^d \RE(z_j)^{\kk_j} \cdot \IM(z_j)^{\kk_{d+j}} \in \polyy_{\abs{\kk}}(\CC^d) \subseteq \polyy_s(\CC^d),
\]
which then yields $P \in \polyy_s(\CC^d)$.
Since $n \geq \dim_\CC(\pssh)$, we can apply \Cref{thm:ridgecomplexpol} and write 
\[
P(z) = \sum_{j=1}^n P_j(\alpha_j^T z),
\]
where $P_j \in \polyy_s(\CC)$ and $\alpha_j \in \CC^d$ with $\twonorm{\alpha_j} \leq 1$ for $j \in \{1,\dots,n\}$.
Note that the choice of the $\alpha_j$ is \emph{independent} of the choice of $f$ (and $P$) according to \Cref{thm:ridgecomplexpol}.
Recall from \Cref{lem:acti} that $\left\{ u_\ell\right\}_{\ell = 1}^\infty$ is an enumeration of the set of complex polynomials in $z$ and $\overline{z}$
with coefficients in $\QQ + \ii \QQ$. 
Since this set is dense in $C(B^1(\CC))$ with respect to $\mnorm{\cdot}_{L^\infty}$ and $\abs{\alpha_j^T z }\leq \twonorm{\alpha_j} \cdot \twonorm{z} \leq 1$ for every $z \in B^d(\CC)$, we can pick $\ell_1, \dots, \ell_n \in \NN$ with 
\paull{
\[
\mnorm{P_j - u_{\ell_j}}_{L^\infty(B^1(\CC))} \leq n^{-1 - \frac{r}{2d-2}} \cdot \left({\metalambda}^{2d}(B^d(\CC))\right)^{-1/q}
\]
and hence
}
\[
\mnorm{P_j(\alpha_j^T z) - u_{\ell_j}(\alpha_j^T z)}_{L^q(B^d(\CC))} \leq n^{-1-\frac{r}{2d-2}} \quad \text{for every } j \in \{1,\dots,n\}.
\]
We then get 
\begin{align*}
&\norel \mnorm{f(z) - \sum_{j=1}^n \phi(\alpha_j^T z + 3\ell_j)}_{L^q(B^d(\CC))} \\
&= \mnorm{f(z) - \sum_{j=1}^n u_{\ell_j}(\alpha_j^T z)}_{L^q(B^d(\CC))} \\
&\leq \mnorm{f(z) - \sum_{j=1}^n P_j(\alpha_j^T z)}_{L^q(B^d(\CC))} + \sum_{j=1}^n \mnorm{P_j(\alpha_j^T z) - u_{\ell_j}(\alpha_j^T z)}_{L^q(B^d(\CC))} \\
&\leq \mnorm{f - P}_{L^q(B^d(\CC))}  + n^{-r/(2d-2)} \leq (C_4 + 1) \cdot n^{-r/(2d-2)}.
\end{align*}
Since $ z \mapsto \sum_{j=1}^n \phi(\alpha_j^T z + 3\ell_j) \in \nn(\alpha_1, \dots, \alpha_n)$, the claim follows by letting $C \defeq C_4 + 1$.
\end{proof}
We remark that one can \emph{not} expect 
the rate of $n^{-r/(2d-2)}$ for \emph{general} (smooth, non-polyharmonic) activation functions $\phi$, as follows from \cite[Theorem~4.3]{geuchen2024optimal}.
\appendix
\section{Discussion of an issue in \texorpdfstring{\cite{maiorov2010best}}{the paper ``Best approximation by ridge functions in Lᵖ-spaces``}}
\label{sec:disc}

In this appendix, we discuss an issue in the paper \cite{maiorov2010best} in which a lower bound of $n^{-r/(d-1)}$ for the error of approximating 
Sobolev functions by sums of $n$ univariate ridge functions
with respect to the $L^1$-norm is shown. 
One of the central ingredients of the proof in that work is \cite[Lemma~5]{maiorov2010best}, which, in the notation of the present work, can be stated as follows: 
\medskip

Let $d, n,s \in \NN$ with $d>1$ and let $P \in \polyy_s(B^d)$ be arbitrary. 
Let $\pi_s : L^2(B^d) \to \polyy_s(B^d)$ denote the orthogonal projection onto $\polyy_s(B^d)$.
Then it holds that
\[
\underset{R \in \mathcal{R}^\ast_{n,d,1}}{\inf} \mnorm{P-R}_{L^1(B^d)} \geq \underset{P' \in \pi_s(\mathcal{R}^\ast_{n,d,1})}{\inf} \mnorm{P-P'}_{L^1(B^d)}.
\] 

\medskip
This claim is thus essentially identical to a univariate version of \Cref{lem:prob_wrong} (i.e., for $\ell = 1$), 
with the difference that the quasi-projection $\prs$ is replaced by the orthogonal projection $\pi_s$
and that there is no absolute constant appearing in the inequality.

The proof presented in \cite{maiorov2010best} relies on the fact that the set $ \mathcal{R}^\ast_{n,d,1}$ can be written as a union of subspaces $U_i \subseteq L^2(B^d)$.
Then, by showing that 
\begin{equation}\label{eq:wwrong}
\underset{R \in U_i}{\inf} \mnorm{P-R}_{L^1(B^d)} \geq \underset{P' \in \pi_s(U_i)}{\inf} \mnorm{P-P'}_{L^1(B^d)}
\end{equation}
holds for any $i$, one easily gets the bound by forming the infimum over $i$ on both sides. 
No special property of the subspaces $U_i$ is used in \cite{maiorov2010best}.
Yet, for arbitrary subspaces of $L^2(B^d)$ \paul{the inequality} \eqref{eq:wwrong} in fact \emph{fails to hold},
even if an absolute constant is allowed. 
We show this in the following proposition.
\begin{proposition}\label{thm:counterexample}
Let $d \in \NN$.
Then for any constant $\kappa>0$ there exist one-dimensional subspaces $U,V \subseteq L^2(B^d)$ and a function $P \in U$ with the property that 
\[
\underset{ g \in V}{\inf}\ \mnorm{P - g}_{L^1(B^d)} < \kappa \cdot \underset{\tilde{g} \in \pi_U(V)}{\inf}  \ \mnorm{P - \tilde{g}}_{L^1(B^d)}.
\]
Here, $\pi_U : \ L^2(B^d) \to U$ denotes the orthogonal projection onto $U$.
\end{proposition}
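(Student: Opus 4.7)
The strategy rests on the following dichotomy: since $V$ is one-dimensional, the projection $\pi_U(V)$ is either $U$ itself or the trivial subspace $\{0\}$. In the first case, $\underset{\tilde g \in \pi_U(V)}{\inf} \mnorm{P - \tilde g}_{L^1(B^d)} = 0$ because $P \in U$, so the desired strict inequality would force the left-hand side to be strictly negative, which is impossible. Hence I must arrange that $V \perp U$ in $L^2$, so that $\pi_U(V) = \{0\}$ and the right-hand side equals $\mnorm{P}_{L^1(B^d)}$. The task then reduces to finding $P$ and $h$ with $\langle h, P \rangle = 0$ but $\mnorm{P - h}_{L^1(B^d)} \ll \mnorm{P}_{L^1(B^d)}$.

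I would use a \emph{tall narrow spike} in $P$ to create a large ``orthogonality budget'' that can be spent at vanishingly small $L^1$ cost. Concretely, fix disjoint measurable subsets $A, B \subseteq B^d$ of positive measure and a parameter $M > 0$ (to be chosen large). Define
\[
P \defeq M \mathbbm{1}_A + \mathbbm{1}_B, \qquad h \defeq -\frac{\abs{B}}{M\abs{A}}\, \mathbbm{1}_A + \mathbbm{1}_B,
\]
and let $U \defeq \spann(P)$, $V \defeq \spann(h)$. A direct computation gives
\[
\langle P, h \rangle = M \cdot \left( -\frac{\abs{B}}{M \abs{A}}\right) \cdot \abs{A} + \abs{B} = 0,
\]
hence $\pi_U(V) = \{0\}$, so that $\underset{\tilde g \in \pi_U(V)}{\inf} \mnorm{P - \tilde g}_{L^1(B^d)} = \mnorm{P}_{L^1(B^d)} = M \abs{A} + \abs{B}$. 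On the other hand, since $P$ and $h$ agree on $B$, I get the upper bound
\[
\underset{g \in V}{\inf} \mnorm{P - g}_{L^1(B^d)} \leq \mnorm{P - h}_{L^1(B^d)} = \left( M + \frac{\abs{B}}{M \abs{A}}\right) \abs{A} = M \abs{A} + \frac{\abs{B}}{M}.
\]

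Given $\kappa > 0$, I would fix any $B \subseteq B^d$ with $0 < \abs{B} < \abss{B^d}$, then pick $M > 2/\kappa$, and finally choose $A \subseteq B^d \setminus B$ with $\abs{A}$ small enough that $M\abs{A} < \abs{B}/M$. Then the ratio is bounded by
\[
\frac{M\abs{A} + \abs{B}/M}{M\abs{A} + \abs{B}} < \frac{2\abs{B}/M}{\abs{B}} = \frac{2}{M} < \kappa,
\]
yielding the strict inequality. The only conceptual step is the dichotomy observation above; the main point to recognize — which one might call the ``obstacle'' — is that $L^2$-orthogonality to $P$ need not force a function to be far from $P$ in $L^1$, as soon as $P$ is allowed to have an arbitrarily concentrated spike. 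This observation would fail, for instance, if $P$ were merely an indicator function: in that case $\int_A|1 - th| \geq |\int_A(1-th)| = \abs{A}$ whenever $\int_A h = 0$, and the analogous inequality would in fact hold with ratio $1$.
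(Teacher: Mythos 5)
Your proposal is correct. Every step checks out: $\langle P,h\rangle = -|B| + |B| = 0$, so $\pi_U(V)=\{0\}$ and the right-hand side equals $\mnorm{P}_{L^1(B^d)} = M|A|+|B|$, while $\mnorm{P-h}_{L^1(B^d)} = M|A| + |B|/M$, and your choice of $M > 2/\kappa$ followed by $|A|$ small enough that $M|A| < |B|/M$ gives ratio below $2/M < \kappa$. The route is genuinely different from the paper's, though it exploits the same underlying mechanism (which you correctly identify: $L^2$-orthogonality to a function with a concentrated tall spike can be purchased at negligible $L^1$ cost). The paper instead builds a \emph{continuous} $P$ of the form $P_N(x) = x_1^{-1/3}\psi_N(x_1)$, arranges that the quantity $\mnorm{P}_{L^2(B^d)}^2 / \bigl(\mnorm{P}_{L^\infty(B^d)}\,\mnorm{P}_{L^1(B^d)}\bigr)$ is small, and then sets $f = P - c\,\mathbbm{1}_Q$ with $Q$ a small cube near the maximum of $P$ and $c$ tuned to force $f\perp P$; the bound $\mnorm{P-f}_{L^1(B^d)} \leq 2\mnorm{P}_{L^2(B^d)}^2/\mnorm{P}_{L^\infty(B^d)}$ then does the work. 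Your two-indicator construction $P = M\mathbbm{1}_A + \mathbbm{1}_B$, $h = -\tfrac{|B|}{M|A|}\mathbbm{1}_A + \mathbbm{1}_B$ is more elementary — it avoids the continuity/maximum-point argument entirely and makes the parameter dependence explicit — at the mild cost of $P$ being only a simple function rather than continuous; since the proposition asks only for a function in a one-dimensional subspace of $L^2(B^d)$, this is immaterial. Your opening dichotomy ($\pi_U(V)$ is either $\{0\}$ or $U$, and the latter would make the right-hand side zero) is a correct and clarifying observation, though not logically needed for the construction itself.
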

\begin{proof}
Let $\kappa > 0$ be an arbitrary constant. 
For $n \in \NN$, let $\psi_n: \RR \to \RR$ be continuous with $0 \leq \psi_n \leq 1$, with $\supp(\psi_n) \subseteq \left[\frac{1}{n},\infty\right)$, and with 
$\psi_n \equiv 1$ on $\left[\frac{2}{n},1\right]$.
Then, \paul{writing $x = (x_1, \dots, x_d)$ for $x \in \RR^d$}, define the functions 
\[
P_n : \quad \RR^d \to \RR, \quad P_n(x) = x_1^{-1/3} \cdot \psi_n(x_1)
\]
\paul{with the understanding that $P_n(x) = 0$ if $x_1 = 0$.}
We get 
\begin{align*}
\mnorm{P_n}^2_{L^2(B^d)} \leq \int_{[-1,1]^d} \abs{P_n(x)}^2 \ \dd x \overset{\text{Tonelli}}{\leq}
\int_{[\frac{1}{n}, 1]} x_1^{-2/3} \ \dd x_1 \cdot 2^{d-1}
= (3 - 3 \cdot n^{-1/3}) \cdot 2^{d-1} \leq 3 \cdot 2^{d-1}.
\end{align*}
Moreover, taking $n \geq 4 \sqrt{d}$, we get 
\begin{align*}
\mnorm{P_n}_{L^1(B^d)} &\geq \int_{\left[ - \frac{1}{\sqrt{d}}, \frac{1}{\sqrt{d}}\right]^d} x_1^{-1/3}\cdot \mathbbm{1}_{[\frac{2}{n} , 1]} (x_1) \ \dd(x_1, \dots, x_d) \\
\overset{\text{Tonelli}}&{=}  \int_{[\frac{2}{n}, \frac{1}{\sqrt{d}}]} x_1^{-1/3} \ \dd x_1 \cdot \int_{[- \frac{1}{\sqrt{d}}, \frac{1}{\sqrt{d}}]^{d-1}} \ \dd(x_2, \dots, x_{d}) \\
&\geq \left(\frac{2}{\sqrt{d}}\right)^{d-1} \cdot \int_{[\frac{1}{2\sqrt{d}}, \frac{1}{\sqrt{d}}]} x_1^{-1/3} \ \dd x_1 =: \theta = \theta(d)> 0.
\end{align*}
Moreover, since $P_n$ is continuous, 
\[
\mnorm{P_n}_{L^\infty(B^d)} \geq P_n(2/n,0,\dots,0)= \left(\frac{n}{2}\right)^{1/3} \cdot \psi_n \left(\frac{2}{n}\right) = \left(\frac{n}{2}\right)^{1/3}.
\]
Therefore, 
\[
\frac{2 \cdot \mnorm{P_n}^2_{L^2(B^d)}}{\mnorm{P_n}_{L^\infty(B^d)} \cdot \mnorm{P_n}_{L^1(B^d)}} 
\leq \frac{6 \cdot 2^{d-1}}{(n/2)^{1/3} \cdot \theta} \to 0 \quad (n \to \infty).
\]
We can therefore pick $N \in \NN$ with 
\[
\frac{2 \cdot \mnorm{P_N}^2_{L^2(B^d)}}{\mnorm{P_N}_{L^\infty(B^d)}} < \kappa \cdot \mnorm{P_N}_{L^1(B^d)}.
\]
Thus, $P \defeq P_N$ is continuous with $P \neq 0$ and 
\begin{equation}\label{eq:kappa}
\frac{2 \cdot \mnorm{P}^2_{L^2(B^d)}}{\mnorm{P}_{L^\infty(B^d)}} < \kappa \cdot \mnorm{P}_{L^1(B^d)}.
\end{equation}
 Let $x^\natural \in B^d$ with $\abs{P(x^\natural)} = \mnorm{P}_{L^\infty}$, which exists since continuous functions on compact sets attain their maximum.
 Without loss of generality, we may assume $P(x^\natural)> 0$, otherwise replace $P$ by $-P$.
Then, by continuity and since $B^d$ is the closure of $\left(B^d\right)^\circ$, 
it is easy to see that there exists a point $x^\ast \in \left(B^d\right)^\circ$ and $\delta > 0$ 
with the property that $Q \defeq x^\ast + (-\delta, \delta)^d \subseteq \left(B^d\right)^\circ$ and 
\begin{equation}\label{eq:x0}
P(x) \geq \frac{P(x^\natural)}{2}> 0 \quad \text{for all } x \in Q.
\end{equation}
We define 
\[
f \defeq P - c \cdot \mathbbm{1}_Q \quad \text{with} \quad c \defeq \frac{\mnorm{P}^2_{L^2(B^d)}}{\int_Q P(x) \ \dd x},
\]
noting that $\int_Q P(x) \ \dd x>  0$ by \eqref{eq:x0}.
We then compute that
\[
\langle P,f \rangle = \mnorm{P}_{L^2}^2 - c \cdot \int_Q P(x) \ \dd x = 0,
\]
meaning $P \perp f$.
Furthermore, since
\[
\frac{\mnorm{P}_{L^\infty(B^d)}}{2} \cdot {\metalambda}^d(Q) 
= \frac{P(x^\natural)}{2} \cdot {\metalambda}^d(Q) \overset{\eqref{eq:x0}}{\leq} \int_Q P(x) \ \dd x,
\]
we get 
\[
 c \leq \frac{2\cdot \mnorm{P}_{L^2(B^d)}^2}{\mnorm{P}_{L^\infty(B^d)}} \cdot \frac{1}{{\metalambda}^d(Q)}.
\]
This yields that
\[
\mnorm{P-f}_{L^1(B^d)} = c \cdot \mnorm{\mathbbm{1}_Q}_{L^1(B^d)} = c \cdot {\metalambda}^d(Q) \leq \frac{2 \cdot \mnorm{P}_{L^2(B^d)}^2}{\mnorm{P}_{L^\infty(B^d)}}. 
\]
For $V \defeq \spann f$ and $U \defeq \spann P$, we have $P \in U$ and get
\[
\underset{ g \in V}{\inf}\ \mnorm{P - g}_{L^1(B^d)} \leq \mnorm{P-f}_{L^1(B^d)} \leq \frac{2 \cdot \mnorm{P}_{L^2(B^d)}^2}{\mnorm{P}_{L^\infty(B^d)}},
\]
but $\pi_U(V) = \{0\}$ (since $f \perp P$) and hence 
\[
\underset{\tilde{g} \in \pi_U(V)}{\inf}  \ \mnorm{P - \tilde{g}}_{L^1(B^d)} = \mnorm{P}_{L^1(B^d)}.
\]
The claim then follows from \eqref{eq:kappa}.
\end{proof}
Specifically, the gap in the proof in \cite{maiorov2010best} lies in its Equation (14), where it is claimed that for a fixed polynomial $P \in \polyy_s(B^d)$ we have 
\begin{equation}\label{eq:wrong}
\underset{v \in \left(\pi_s( U_i)\right)^\perp \cap \polyy_s(B^d), \ \mnorm{v}_{L^\infty(B^d)} \leq 1}{\sup} \langle P,v \rangle = \underset{h \in \pi_s (U_i)}{\inf} \mnorm{P-h}_{L^1(B^d)}.
\end{equation}
Here, by definition we have 
\[
\left(\pi_s( U_i)\right)^\perp \defeq  \left\{ v \in L^\infty(B^d): \ \langle v, w \rangle = 0 \text{ for all } w \in \pi_s( U_i)\right\}.
\]
For an arbitrary normed space $(\mathcal{X}, \mnorm{\cdot})$ and a subspace $M \subseteq \mathcal{X}$, it is well-known that the distance between an arbitrary element $x \in \mathcal{X}$ 
and \paul{the space} $M$ can be expressed using the \emph{dual} space of $\mathcal{X}$. 
More precisely, let $\mathcal{X}^\ast$ denote the normed dual of $\mathcal{X}$ with dual norm $\mnorm{\cdot}_\ast$. 
One can then define the \emph{annihilator} of $M$ as 
\[
M^\perp \defeq \left\{ \varphi \in \mathcal{X}^\ast : \ \fres{\varphi}{M} \equiv 0\right\}.
\]
For an arbitrary element $x \in \mathcal{X}$, we then have 
\[
\underset{m \in M}{\inf} \mnorm{x - m } = \underset{\mnorm{\varphi}_\ast \leq 1}{\underset{\varphi \in M^\perp}{\sup}} \varphi(x),
\]
see for instance \cite[p.119,Thm.1]{luenberger1997optimization}.
Applying this fact to the case $\mathcal{X} = (\polyy_s(B^d), \mnorm{\cdot}_{L^1(B^d)})$ and $M = \pi_s(U_i)$ implies 
\[
\underset{h \in \pi_s (U_i)}{\inf} \mnorm{P-h}_{L^1(B^d)} = \underset{\varphi}{\sup} \ \varphi(P),
\]
where the supremum is taken over all continuous linear functionals $\varphi: \polyy_s(B^d) \to \RR$, for which 
\begin{equation}\label{eq:sup}
\underset{\mnorm{P'}_{L^1(B^d)}\leq 1}{\underset{P' \in \polyy_s(B^d)}{\sup}} \varphi(P') \leq 1 \quad \text{and} \quad \varphi(w) = 0 \quad \text{for every } w \in \pi_s(U_i). 
\end{equation}
Note that in order for \eqref{eq:wrong} (at least with ``$\geq$'' instead of ``$=$'') to hold, it would be sufficient that for each continuous linear functional $\varphi: \polyy_s(B^d) \to \RR$ 
satisfying \eqref{eq:sup}, there exists a function $v \in \polyy_s(B^d)$ with $\mnorm{v}_{L^\infty(B^d)} \leq 1$, $\langle v,w \rangle = 0$ for all $w \in \pi_s(U_i)$ and
\[
\varphi(P) = \langle v, P \rangle.
\]
If one is willing to drop the condition $v \in \polyy_s(B^d)$, this is in fact true: 
According to the Hahn-Banach extension theorem, for each such functional $\varphi: \polyy_s(B^d) \to \RR$ we can pick a (with respect to $\mnorm{\cdot}_{L^1(B^d)}$)
continuous linear extension $\varphi' : L^1(B^d) \to \RR$ with $\fres{\varphi'}{\polyy_s(B^d)} = \varphi$ and 
\[
\underset{\mnorm{f}_{L^1(B^d)} \leq 1}{\underset{f \in L^1(B^d)}{\sup}}\varphi'(f) = \underset{\mnorm{P'}_{L^1(B^d)}\leq 1}{\underset{P' \in \polyy_s(B^d)}{\sup}} \varphi(P') \leq 1.
\] 
Moreover, according to \cite[Theorem~6.15]{folland1984real} there exists $v \in L^\infty(B^d)$ with $\mnorm{v}_{L^\infty(B^d)} \leq 1$ and 
\[
\langle v, f \rangle = \varphi'(f) \quad \text{for all }f\in L^1(B^d).
\]
In particular, since $\fres{\varphi'}{\polyy_s(B^d)} = \varphi$, we have $\langle v,w \rangle = \varphi(w) = 0$ for every $w \in \pi_s(U_i)$.
However, it is \emph{not} clear (to the authors of the present paper), whether one can pick $v$ to be contained in $\polyy_s(B^d)$, which would imply \eqref{eq:wrong} (with ``$\geq$'').

Alternatively, for each continuous linear functional $\varphi: \polyy_s(B^d) \to \RR$ satisfying \eqref{eq:sup}, since $\polyy_s(B^d)$ is finite-dimensional
it is well-known that there exists $v \in \polyy_s(B^d)$ with
\[
\varphi(P') = \langle v,P' \rangle \quad \text{for all } P' \in \polyy_s(B^d).
\] 
By \eqref{eq:sup}, we have $\langle v,w \rangle = 0$ for all $w \in \pi_s(U_i)$ and 
\[
\underset{\mnorm{Q}_{L^1(B^d)} \leq 1}{\underset{Q \in \polyy_s(B^d)}{\sup}} \langle v, Q \rangle 
= \underset{\mnorm{Q}_{L^1(B^d)}\leq 1}{\underset{Q \in \polyy_s(B^d)}{\sup}} \varphi(Q)\leq 1.
\]
However, it is \emph{not} clear at all (to the authors of the present paper) if this implies that 
\[
\mnorm{v}_{L^\infty(B^d)} = \underset{\mnorm{f}_{L^1(B^d)} \leq 1}{\underset{f \in L^1(B^d)}{\sup}} \langle v, f\rangle \leq 1.
\]

We emphasize that the preceding discussion including \Cref{thm:counterexample} does not necessarily imply that the statement \cite[Lemma~5]{maiorov2010best} is false. 
It might well be the case that the stated inequality is true, at least up to a fixed multiplicative constant. 
However, since the statement does not hold for arbitrary subspaces of $L^2(B^d)$ (see \Cref{thm:counterexample}), this then necessarily relies on specific 
properties of ridge functions and polynomials that the authors of the present work are not aware of (and that are not mentioned in \cite{maiorov2010best}).

\section{Postponed proofs} \label{app:postponed}

\subsection{Proof of Lemma \ref{lem:deltarewrite}}
\label{app:postponed1}

\begin{proof}[Proof of \Cref{lem:deltarewrite}]
\paul{
We will use throughout the proof without comment that $\binom{k + \sigma}{\sigma} = \binom{k + \sigma}{k}$, so that we need to show 
\[
\prs f = \sum_{k = 0}^\infty (\Delta^{\sigma + 1}\eta^\ast)(k) \cdot \binom{k + \sigma}{k} \cdot S_k^\sigma(f).
\]
}
The proof is by induction over $\sigma \in \NN_0$.
In the case $\sigma = 0$, we get 
\begin{align*}
\sum_{k=0}^\infty \left(\left(\Delta \eta^\ast \right)(k) \cdot \binom{k}{k} \cdot S_k^0(f) \right)&= 
\sum_{k=0}^\infty \left[\left(\Delta \eta^\ast\right)(k) \cdot \frac{1}{\binom{k}{0}} \cdot \sum_{j= 0}^k \underbrace{\binom{k-j}{0}}_{=1} \cdot \mathrm{proj}_j(f)\right] \\
&= \sum_{k=0}^\infty \left[\big(\eta(k/s) - \eta((k+1)/s)\big) \cdot \sum_{j= 0}^k  \mathrm{proj}_j(f) \right] \\
&= \sum_{k= 0}^\infty \left[ \eta(k/s) \cdot \sum_{j=0}^k \mathrm{proj}_j(f)\right] - \sum_{k= 0}^\infty \left[ \eta((k+1)/s) \cdot \sum_{j=0}^k \mathrm{proj}_j(f)\right] \\
&= \sum_{k= 0}^\infty \left[ \eta(k/s) \cdot \sum_{j=0}^k \mathrm{proj}_j(f)\right] - \sum_{k= 1}^\infty \left[ \eta(k/s) \cdot \sum_{j=0}^{k-1} \mathrm{proj}_j(f)\right] \\
&= \sum_{k= 0}^\infty \eta(k/s) \cdot \mathrm{proj}_k(f) \\
&= \prs f.
\end{align*}
We now assume the claim to be true for an arbitrary but fixed $\sigma \in \NN_0$.
We then get 
\begin{align*}
&\norel\sum_{k= 0}^\infty \left(\left(\Delta^{\sigma + 2} \eta^\ast\right)(k) \cdot \binom{k + \sigma + 1}{k} \cdot S_k^{\sigma + 1}(f)\right) \\
&= \sum_{k= 0}^\infty \left[\left[\left(\Delta^{\sigma + 1}\eta^\ast\right)(k) - \left(\Delta^{\sigma + 1}\eta^\ast\right)(k+1)\right]  \cdot \binom{k + \sigma + 1}{k} \cdot S_k^{\sigma + 1}(f) \right] \\
&= \sum_{k= 0}^\infty \left[\left(\Delta^{\sigma + 1}\eta^\ast\right)(k) \cdot \binom{k + \sigma + 1}{k} \cdot S_k^{\sigma + 1}(f) \right] 
- \sum_{k= 0}^\infty \left[\left(\Delta^{\sigma + 1}\eta^\ast\right)(k+1) \cdot \binom{k + \sigma + 1}{k} \cdot S_k^{\sigma + 1}(f) \right] \\
&=  \sum_{k= 0}^\infty \left[\left(\Delta^{\sigma + 1}\eta^\ast\right)(k) \cdot \binom{k + \sigma + 1}{k} \cdot S_k^{\sigma + 1}(f) \right] 
- \sum_{k= 1}^\infty \left[\left(\Delta^{\sigma + 1}\eta^\ast\right)(k) \cdot \binom{k + \sigma}{k-1} \cdot S_{k-1}^{\sigma + 1}(f) \right]  \\
&= \left(\Delta^{\sigma + 1}\eta^\ast\right)(0) \cdot S_0^{\sigma + 1}(f) 
+ \sum_{k=1}^\infty \left( \left(\Delta^{\sigma + 1}\eta^\ast\right)(k) \left[ \binom{k + \sigma +1}{k} \cdot S_k^{\sigma +1}(f) - \binom{k + \sigma}{k-1} \cdot S_{k-1}^{\sigma +1}(f)\right]\right) \\
&= \left(\Delta^{\sigma + 1}\eta^\ast\right)(0)\cdot \binom{0+\sigma}{\sigma} \cdot S_0^{\sigma}(f) \\
&\hspace{0.5cm}+ \sum_{k=1}^\infty \left( \left(\Delta^{\sigma + 1}\eta^\ast\right)(k) \left[ \binom{k + \sigma +1}{k} \cdot S_k^{\sigma +1}(f) - \binom{k + \sigma}{k-1} \cdot S_{k-1}^{\sigma +1}(f)\right]\right),
\end{align*}
where the last step used that $S_0^{\gamma}(f) = \mathrm{proj}_0(f)$ for every $\gamma \in \NN_0$ as follows from the definition of $S_0^\gamma(f)$.

Keeping the induction hypothesis in mind, it therefore suffices to show 
\[
 \binom{k + \sigma +1}{k} \cdot S_k^{\sigma +1}(f) - \binom{k + \sigma}{k-1} \cdot S_{k-1}^{\sigma +1}(f) = \binom{k + \sigma}{\sigma} \cdot S^\sigma_k(f) \quad \text{for every } k \geq 1.
\]
Plugging in the definition of the Cesaro means and using $\binom{k+\sigma + 1}{\sigma + 1} = \binom{k+\sigma + 1}{k}$ and 
$\binom{k+\sigma}{\sigma + 1} = \binom{k+\sigma}{k-1}$, we get 
\begin{align*}
&\norel \binom{k + \sigma +1}{k} \cdot S_k^{\sigma +1}(f) - \binom{k + \sigma}{k-1} \cdot S_{k-1}^{\sigma +1}(f) \\
 &=  \frac{ \binom{k + \sigma +1}{k} }{ \binom{k + \sigma +1}{\sigma + 1}} \cdot \sum_{j= 0}^k \left[\binom{k-j + \sigma + 1}{\sigma + 1} \mathrm{proj}_j(f)\right]
 - \frac{\binom{k+ \sigma}{k-1}}{\binom{k+ \sigma}{\sigma +1}} \cdot \sum_{j= 0}^{k-1} \left[\binom{k-j+\sigma}{\sigma +1}\mathrm{proj}_j(f) \right]  \\
 &= \mathrm{proj}_k(f)  + \sum_{j= 0}^{k-1} \left[ \binom{k-j+ \sigma +1}{\sigma + 1} - \binom{k-j + \sigma}{\sigma + 1}\right] \cdot \mathrm{proj}_j(f) \\
 \overset{(\ast)}&{=} \binom{k-k + \sigma}{\sigma} \cdot \mathrm{proj}_k(f)  + \sum_{j= 0}^{k-1} \binom{k-j + \sigma}{\sigma} \cdot \mathrm{proj}_j(f) \\
 &= \sum_{j= 0}^{k} \binom{k-j + \sigma}{\sigma} \cdot \mathrm{proj}_j(f)  \\
 &= \binom{k + \sigma}{\sigma} \cdot S^\sigma_k(f),
\end{align*}
using Pascal's rule
\[
\binom{n}{m} + \binom{n}{m-1} = \binom{n+1}{m} \quad \text{for every } n,m \in \NN \text{ with } m\leq n
\]
at the step marked with $(\ast)$.
This proves the claim. 
\end{proof}

\subsection{Proof of Lemma \ref{lem:epsbound}}
\label{sec:signproof}

We mention again that the proof is essentially taken from \cite[Lemma~6]{maiorov2010best}, with a few more details added. 
\begin{proof}[Proof of Lemma \ref{lem:epsbound}]
Let $g(x) \defeq 1 - \frac{1}{2}(1-2x)^2 \cdot \log_2(\ee)$. 
Then $g(1/2) = 1$ and $g(0) = 1 - \frac{1}{2} \log_2(\ee)  \leq \frac{1}{2}$. 
By the intermediate value theorem, we can thus choose $a \in \left(0, \frac{1}{2}\right)$ such that
\[
1 - \frac{1}{2}(1-2a)^2 \cdot \log_2(\ee) = \frac{47}{64}.
\]
Let $\pi \in \sgn(\Gamma)\paul{\subseteq E^m}$ be arbitrary and consider the set
\[
E_\pi \defeq \left\{ \eps \in E^m: \ \sum_{i=1}^m \abs{\eps_i - \pi_i} \leq 2am\right\}.
\]
Consider the bijection
\[
\varphi_\pi: \quad E^m \to E^m, \quad \left(\varphi_\pi(\eps)\right)_i = -\pi_i \cdot \eps_i = \begin{cases}\eps_i, & \text{if }\pi_i = -1, \\ - \eps_i, & \text{if }\pi_i = 1.\end{cases}
\]
Fix $i \in \{1,\dots,m\}$ \paull{for the moment}. 
If $\pi_i = 1$, we observe 
\[
\abs{\left(\varphi_\pi(\eps)\right)_i - \pi_i} = - \left(\varphi_\pi(\eps)\right)_i+ 1 = \eps_i + 1
\]
and if $\pi_i = -1$, we get
\[
\abs{\left(\varphi_\pi(\eps)\right)_i - \pi_i} = \left(\varphi_\pi(\eps)\right)_i - (-1) = \eps_i + 1.
\]
Since $\varphi_\pi$ is a bijection (for instance because $\varphi_\pi \circ \varphi_\pi = \mathrm{id}$), we get
\begin{align*}
\abs{E_\pi}  &= \abs{\varphi_\pi^{-1}(E_\pi)}=\abs{\left\{ \eps \in E^m: \ \varphi_\pi(\eps) \in E_\pi\right\}} = \abs{\left\{ \eps \in E^m: \ \sum_{i=1}^m \abs{\left(\varphi_\pi(\eps)\right)_i - \pi_i} \leq 2am\right\}} \\
&= \abs{\left\{ \eps \in E^m: \ \sum_{i=1}^m (\eps_i + 1) \leq 2am\right\}} 
= \abs{\left\{ \eps \in E^m: \ \underset{\eps_i = 1}{\sum_{i \in \{1,\dots,m\}}} 2 \leq 2am\right\}} \\
&= \abs{\left\{ \eps \in E^m: \ \underset{\eps_i = 1}{\sum_{i \in \{1,\dots,m\}}} 1 \leq \lfloor am \rfloor \right\}} 
= \sum_{j=0}^{\lfloor am \rfloor} \abs{\left\{ \eps \in E^m: \ \underset{\eps_i = 1}{\sum_{i \in \{1,\dots,m\}}} 1 =j  \right\}}.
\end{align*}
Each summand of the final sum can be viewed as the number of subsets of $\{1,\dots,m\}$ with cardinality $j$. 
We hence get
\[
\abs{E_\pi} = \sum_{j=0}^{\lfloor am \rfloor} \binom{m}{j}.
\]
This sum can be bounded using Hoeffding's inequality for bounded random variables, see for instance 
\cite[Theorem~2.2.6]{vershynin_high-dimensional_2018}.
To do so, we denote the binomial distribution with parameters $m$ and $p = 1/2$ by $\bin(m, 1/2)$, i.e., $m$ ``tries'', each with success probability $\frac{1}{2}$.
We then get
\begin{align*}
\sum_{j=0}^{\lfloor am \rfloor} \binom{m}{j} = 2^m \cdot \underset{X \sim \bin(m, 1/2)}{\PP} \left(X \leq \lfloor am\rfloor\right) 
& = 2^m \cdot \underset{\sigma_i \iid \unif(\{0,1\})}{\PP} \left(\sum_{i=1}^m \sigma_i \leq \lfloor am \rfloor\right) \\
&= 2^m \cdot  \underset{\sigma_i \iid \unif(\{0,1\})}{\PP} \left(\sum_{i=1}^m (-\sigma_i + 1/2) \geq m/2 - \lfloor am \rfloor\right), \\
\end{align*}
where we note that $m/2 -  \lfloor am \rfloor > 0$ since $a < 1/2$.
Since  $-1 \leq -\sigma_i \leq 0$ and $\EE[-\sigma_i] = - \frac{1}{2}$, Hoeffding's inequality implies
\begin{align*}
\underset{\sigma_i \iid \unif(\{0,1\})}{\PP} \left(\sum_{i=1}^m (-\sigma_i + 1/2) \geq m/2 - \lfloor am \rfloor\right)
&\leq \exp \left(- \frac{2 (m/2 - \lfloor am \rfloor)^2}{m}\right) \\
&= \exp \left(- 2m \cdot (1/2 - \beta)^2\right) \\
&= \exp\left(- \frac{1}{2}\cdot m \cdot (1 - 2\beta)^2\right) 
\end{align*}
with $\beta \defeq \frac{\lfloor am \rfloor}{m}$.
Applying the inequality gives us 
\begin{align*}
\abs{E_\pi} = \sum_{j=0}^{\lfloor am \rfloor} \binom{m}{j} \leq 2^m \cdot \exp\left(-\frac{1}{2} \cdot m  \cdot (1-2\beta)^2\right) 
&= 2^{m - \log_2(\ee) \cdot  \frac{1}{2}\cdot m \cdot (1-2\beta)^2} \\
&\leq 2^{m(1 -\log_2(\ee) \cdot \frac{1}{2} \cdot (1-2a)^2)}= 2^{m \cdot 47/64},
\end{align*}
where we used that $\beta \leq a \leq \frac{1}{2}$ implies $(1-2\beta)^2 \geq (1-2a)^2$. 
Moreover, note that 
\begin{align*}
\abs{E^m \setminus \bigcup_{\pi \in \sgn(\Gamma)} E_\pi} = \abs{E^m} - \abs{\bigcup_{\pi \in \sgn(\Gamma)} E_\pi} \geq \abs{E^m} - \abs{\sgn(\Gamma)}\cdot 2^{m \cdot 47/64} \geq 2^m - 2^{m \cdot 63/64}>0,
\end{align*}
where we used $\abs{\sgn(\Gamma)} \leq 2^{m/4}$ by assumption. 
We can thus choose $\eps^\ast \in E^m \setminus \bigcup_{\pi \in \sgn(\Gamma)} E_\pi $. 
By definition \paull{of the sets $E_\pi$}, we \paull{then} have 
\[
\underset{\pi \in \sgn(\Gamma)}{\inf} \onenorm{\eps^\ast - \pi} \geq 2am. 
\]
It remains to translate this estimate to the set $\Gamma$.
To this end, let $x \in \RR$ be arbitrary and $\eps \in \{-1,1\}$. 
If $\sgn(x) = \eps$, we get $\abs{x - \eps} \geq 0 = \frac{1}{2} \cdot \abs{\sgn(x)-\eps}$. 
If $\sgn(x) \neq \eps$, we get 
\[
\abs{x- \eps} \geq 1 = \frac{1}{2} \cdot 2 = \frac{1}{2} \cdot \abs{\sgn(x) - \eps }.
\]
In any case, we conclude
\[
\abs{x-\eps} \geq \frac{1}{2} \cdot \abs{\sgn(x) - \eps}.
\]
This yields
\[
\underset{x \in \Gamma}{\inf} \onenorm{\eps^\ast - x}  \geq \frac{1}{2} \cdot \underset{x \in \Gamma}{\inf}
\onenorm{\eps^\ast - \sgn(x)}=\frac{1}{2} \cdot \underset{\pi \in \sgn(\Gamma)}{\inf} \onenorm{\eps^\ast - \pi} \geq am,
\]
as desired. 
\end{proof}

\subsection{Proof of Lemma~\ref{lem:trig_power_reduction}}
\label{sec:proof_trig_power_reduction}

\begin{proof}[Proof of \Cref{lem:trig_power_reduction}]
From \cite[Appendix~I.1.9]{prudnikov1986integrals}, we get the existence of $\gamma_0 = \gamma_0(a), \dots,\gamma_a = \gamma_a(a) \in \RR$ as well as
$\delta_0 = \delta_0(b), \dots,\delta_b = \delta_b(b) \in \RR$ with 
\[
\cos(\varphi)^a = \sum_{h= 0}^a \gamma_h \cos(h\varphi) \quad \text{for all } \varphi \in \RR
\]
and 
\[
\sin(\varphi)^b = \begin{cases}\displaystyle\sum_{h= 0}^b \delta_h \cos(h\varphi), & \text{if }b \text{ even,}\\
\displaystyle\sum_{h= 0}^b \delta_h \sin(h\varphi), & \text{if }b \text{ odd} \end{cases} \quad \text{for all } \varphi \in \RR.
\]
If $b$ is even, we hence get 
\begin{equation}\label{eq:tbrearranged}
\cos(\varphi)^a \sin(\varphi)^b = \sum_{h_1= 0}^a \sum_{h_2 = 0}^b \ \gamma_{h_1} \delta_{h_2} \cdot \cos(h_1 \varphi) \cos(h_2 \varphi) .
\end{equation}
We can use the well-known product-to-sum formula for cosines (see \cite[Appendix~I.1.8]{prudnikov1986integrals}) and get 
\begin{align*}
\cos(h_1 \varphi) \cos(h_2 \varphi)  &= \frac{1}{2} \cdot \big(\cos((h_1 - h_2) \varphi) + \cos((h_1 + h_2) \varphi)\big).
\end{align*}
Since $\abs{h_1 \pm h_2} \leq h_1 + h_2 \leq a + b$ for $h_1 \in \{0,\dots,a\}$ and $h_2 \in \{0,\dots,b\}$ and by 
the symmetry of the cosine, we can thus rearrange \eqref{eq:tbrearranged}
and get 
\[
\cos(\varphi)^a \sin(\varphi)^b =  \sum_{h= 0}^{a+b} \alpha_h \cos(h\varphi) 
\]
for certain coefficients $\alpha_h = \alpha_h(a,b) \in \RR$. 
Similarly, in the case that $b$ is odd, one obtains 
\[
\cos(\varphi)^a \sin(\varphi)^b =  \sum_{h= 0}^{a+b} \beta_h \sin(h\varphi) 
\]
for coefficients $\beta_h = \beta_h(a,b) \in \RR$. 
This proves the claim. 
\end{proof}

\subsection{Postponed proofs for identities involving Wirtinger derivatives}\label{sec:wirt_proofs}
In this subsection, we provide rigorous proofs for the identities in \Cref{eq:ts1,eq:ts2}.
To this end, we need the following well-known properties of Wirtinger derivatives that can be found for example in \cite[E.1a]{kaup_holomorphic_1983}.
Here, we assume that $U \subseteq \CC$ is open and $f \in C^1(U; \CC)$.
\begin{enumerate}
    \item $\wirt$ and $\wirtq$ are both $\CC$-linear operators on the set $C^1(U; \CC)$.
    \item \label{wirtdiff} $f$ is complex-differentiable at $z \in U$ iff $\wirtq f(z) = 0$ and in this case the equality
    \begin{equation*}
        \wirt f(z) = f'(z)
    \end{equation*}
    holds true, with $f'(z)$ denoting the complex derivative of $f$ at $z$. 
    \item \label{wirtconj} We have the conjugation rules
    \begin{align*}
        \overline{\wirt f} = \wirtq \overline{f} \quad \text{and} \quad \overline{\wirtq f} = \wirt \overline{f}.
    \end{align*}
    \item \label{wirtprod} If $g \in C^1(U; \CC)$, the following product rules for Wirtinger derivatives hold for every $z \in U$:
    \begin{align*}
        \wirt (f \cdot g)(z)&= \wirt f (z) \cdot g(z) + f(z) \cdot \wirt g(z), \\
        \wirtq (f \cdot g)(z)&= \wirtq f (z) \cdot g(z) + f(z) \cdot \wirtq g(z).
    \end{align*}
    This product rule is not explicitly stated in \cite{kaup_holomorphic_1983} but follows easily from the product rule for $\frac{\partial}{\partial x}$ and $\frac{\partial}{\partial y}$.
    \item \label{wirtchain} If $V \subseteq \CC$ is an open set and $g \in C^1(V;\CC)$ with $g(V) \subseteq U$, then the following chain rules for Wirtinger derivatives hold true:
    \begin{align*}
        \wirt(f \circ g) &= \left[(\wirt f) \circ g\right]\cdot \wirt g + \left[\left(\wirtq f\right)\circ g\right]\cdot \wirt \overline{g}, \\
        \wirtq(f \circ g) &= \left[(\wirt f) \circ g\right]\cdot \wirtq g + \left[\left(\wirtq f\right)\circ g\right]\cdot \wirtq \overline{g}.
    \end{align*}
\end{enumerate}

Using these properties, we can now prove \eqref{eq:ts1}.

\begin{lemma}\label{lem:wirtiden1}
Let $\kk, \elll, \kk', \elll' \in \NN_0^d$ be multiindices with 
$\abs{\kk} = \abs{\kk'}$ and $\abs{\elll} = \abs{\elll'}$.
Then it holds 
\[
\wirt^{\kk}\wirtq^{\elll}(z^{\kk'}\overline{z}^{\elll'}) = \mathbbm{1}_{(\kk, \elll) = (\kk', \elll') } \cdot \kk ! \cdot \elll !.
\]
\end{lemma}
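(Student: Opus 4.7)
The plan is to reduce everything to a one-variable computation via the product rule (Property~\eqref{wirtprod}) for Wirtinger derivatives, together with the trivial base identities $\wirt z = 1$, $\wirt \overline{z} = 0$, $\wirtq z = 0$, $\wirtq \overline{z} = 1$, which follow directly from the definitions applied to $z = x + \ii y$ and $\overline{z} = x - \ii y$. From these and a straightforward induction based on the product rule, I would first derive the single-variable formulae
\[
\wirt^{a}(z^{a'}) = \frac{a'!}{(a'-a)!}\, z^{a'-a}, \qquad \wirtq^{b}(\overline{z}^{b'}) = \frac{b'!}{(b'-b)!}\, \overline{z}^{b'-b}
\]
whenever $a \leq a'$ and $b \leq b'$ (and zero otherwise), together with the annihilation relations $\wirt^a(\overline{z}^{b'}) = 0$ and $\wirtq^b(z^{a'}) = 0$ for any $a, b \geq 1$.

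Next I would pass to the multivariate setting. Since each Wirtinger operator $\wirt_j$ and $\wirtq_j$ is a linear combination of the commuting partial derivatives $\partial_{x_i}, \partial_{y_i}$, all of these operators commute with one another; moreover $\wirt_j$ and $\wirtq_j$ act as zero on $z_k, \overline{z_k}$ for $k \neq j$. A further induction using the product rule (applied coordinate by coordinate) therefore yields the factorization
\[
\wirt^{\kk} \wirtq^{\elll}\!\left( z^{\kk'} \overline{z}^{\elll'}\right) = \prod_{j=1}^{d} \wirt_j^{\kk_j}\wirtq_j^{\elll_j}\!\left( z_j^{\kk'_j} \overline{z_j}^{\elll'_j}\right).
\]
Substituting the single-variable formulae into each factor gives
\[
\wirt_j^{\kk_j}\wirtq_j^{\elll_j}\!\left( z_j^{\kk'_j} \overline{z_j}^{\elll'_j}\right) = \frac{\kk'_j!}{(\kk'_j - \kk_j)!} \cdot \frac{\elll'_j!}{(\elll'_j - \elll_j)!} \cdot z_j^{\kk'_j - \kk_j}\,\overline{z_j}^{\elll'_j - \elll_j}
\]
provided that $\kk_j \leq \kk'_j$ and $\elll_j \leq \elll'_j$, and zero otherwise.

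Finally, I would invoke the hypothesis $\abs{\kk} = \abs{\kk'}$ and $\abs{\elll} = \abs{\elll'}$. If the componentwise conditions $\kk_j \leq \kk'_j$ fail for some~$j$, the corresponding factor vanishes and the whole product is zero; if they hold for every~$j$, the equality of totals forces $\kk = \kk'$ (and analogously $\elll = \elll'$), in which case each factor collapses to $\kk_j!\,\elll_j!$, whose product is $\kk!\cdot\elll!$. This yields the claimed indicator identity. I do not anticipate any genuine obstacle: the only step requiring any care is verifying the coordinate-wise factorization by induction on $\abs{\kk} + \abs{\elll}$, and even that reduces to applying the product rule to a product whose factors depend on disjoint sets of variables.
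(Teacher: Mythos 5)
Your proposal is correct and follows essentially the same route as the paper: both arguments rest on the product rule for Wirtinger derivatives, the annihilation identities $\wirt\,\overline{z}=0$ and $\wirtq\, z=0$, and the observation that $\abs{\kk}=\abs{\kk'}$ together with $\kk_j\leq\kk'_j$ for all $j$ forces $\kk=\kk'$. The only difference is organizational: you first derive the general falling-factorial formula and a coordinate-wise factorization and then specialize, whereas the paper handles the diagonal case $(\kk,\elll)=(\kk',\elll')$ by a double induction and the off-diagonal case separately by locating an index $j$ with $\kk_j>\kk'_j$ and annihilating that factor.
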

\begin{proof}
We start by showing that 
\begin{equation}\label{eq:1st}
\wirt^{\kk}\wirtq^{\elll}(z^{\kk}\overline{z}^{\elll}) = \kk ! \cdot \elll !.
\end{equation}
Firstly, assume that $\elll = 0$. 
We show via induction over $\abs{\kk}$ that the identity 
\[
\wirt^{\kk} z^{\kk} = \kk !
\]
holds. 
There is nothing to show in the case $\kk = 0$. 
Therefore, assume that the claim holds for a fixed $\kk \in \NN_0^d$ and let $j \in \{1,\dots,d\}$ be arbitrary. 
Then we get 
\[
\wirt^{\kk + e_j} z^{\kk + e_j} = \wirt^{\kk} \wirt^{e_j} \left[  z^{\kk} \cdot z_j\right]. 
\]
Hence, for fixed variables $z_1, \dots, z_{j-1}, z_{j+1},\dots, z_d$, we consider $\wirt [z_j \mapsto z^{\kk} \cdot z_j]$.
From the linearity of $\wirt$ we deduce 
\[
\wirt [z_j \mapsto z^{\kk} \cdot z_j] = z^{\kk - \kk_j \cdot e_j} \cdot \wirt \left[z_j \mapsto z_j^{\kk_j + 1}\right] = (\kk_j + 1) \cdot z^{\kk - \kk_j \cdot e_j} \cdot z_j^{\kk_j}
= (\kk_j + 1) \cdot z^{\kk}.
\]
Here, we used the fact that $z_j \mapsto z_j^{\kk_j + 1}$ is holomorphic and the fact that $\wirt$ coincides with the regular complex derivative (see \eqref{wirtdiff}) in this case. 
This gives us 
\[
\wirt^{\kk + e_j} z^{\kk + e_j} = \wirt^{\kk} \wirt^{e_j} \left[  z^{\kk} \cdot z_j\right] = (\kk_j + 1) \cdot \wirt^{\kk} z^{\kk} = (\kk + e_j)!  
\]
according to the induction hypothesis. 
By induction, we have thus shown 
\[
\wirt^{\kk} z^{\kk} = \kk ! \quad \text{for every $\kk \in \NN_0^d$.}
\]
In order to get the full claim, we now perform an additional induction over $\abs{\elll}$ and show that for every $\elll \in \NN_0^d$ the statement 
\[
\fall \kk \in \NN_0^d: \quad \wirt^{\kk}\wirtq^{\elll}(z^{\kk}\overline{z}^{\elll}) = \kk ! \cdot \elll !
\]
holds. 
Again for $\elll = 0$, the statement is already proven above.  
Therefore, we pick $j \in \{1,\dots,d\}$ arbitrary and assume that $\elll \in \NN_0^d$ satisfies the claim. 
We then let $\kk \in \NN_0^d$ be arbitrary and note
\[
\wirt^\kk \wirtq^{\elll + e_j}(z^\kk \overline{z}^{\elll + e_j})  
= \wirt^\kk \wirtq^{\elll} \wirtq^{e_j}(z^{\kk - \kk_j \cdot e_j} \overline{z}^{\elll - \elll_j \cdot e_j} \cdot z_j^{\kk_j} \cdot \overline{z_j}^{\elll_j + 1}).
\]
We next note that 
\[
\wirtq^{e_j}(z^{\kk - \kk_j \cdot e_j} \overline{z}^{\elll - \elll_j \cdot e_j} \cdot z_j^{\kk_j} \cdot \overline{z_j}^{\elll_j + 1})
= \wirtq \left[ z_j \mapsto z^{\kk - \kk_j \cdot e_j} \overline{z}^{\elll - \elll_j \cdot e_j} \cdot z_j^{\kk_j} \cdot \overline{z_j}^{\elll_j + 1}\right].
\]
Again, for fixed variables $z_1, \dots, z_{j-1}, z_{j+1},\dots, z_d$, due to linearity we get 
\[
\wirtq \left[ z_j \mapsto z^{\kk - \kk_j \cdot e_j} \overline{z}^{\elll - \elll_j \cdot e_j} \cdot z_j^{\kk_j} \cdot \overline{z_j}^{\elll_j + 1}\right]
= z^{\kk - \kk_j \cdot e_j} \overline{z}^{\elll - \elll_j \cdot e_j} \cdot \wirtq\left[ z_j \mapsto z_j^{\kk_j} \cdot \overline{z_j}^{\elll_j + 1}\right].
\]
Using the product rule \eqref{wirtprod}, we get 
\begin{align*}
\wirtq\left[ z_j \mapsto z_j^{\kk_j} \cdot \overline{z_j}^{\elll_j + 1}\right] &= \wirtq[z_j \mapsto z_j^{\kk_j}] \cdot \overline{z_j}^{\elll_j + 1} + 
z_j^{\kk_j} \cdot \wirtq[z_j \mapsto \overline{z_j}^{\elll_j + 1}].
\end{align*}
Note that the first summand vanishes since $z_j \mapsto z_j^{\kk_j}$ is holomorphic (see \eqref{wirtdiff}).
Moreover, we get 
\[
\wirtq[z_j \mapsto \overline{z_j}^{\elll_j + 1}] \overset{\eqref{wirtconj}}{=} \overline{\wirt[z_j \mapsto z_j^{\elll_j + 1}]}
\overset{\eqref{wirtdiff}}{=} (\elll_j + 1) \cdot \overline{z_j}^{\elll_j}.
\]
All in all, this yields that
\begin{align*}
\wirt^\kk \wirtq^{\elll + e_j}(z^\kk \overline{z}^{\elll + e_j}) = (\elll_j + 1) \cdot \wirt^{\kk} \wirtq^{\elll} [z^\kk \overline{z}^{\elll}] = \kk ! \cdot (\elll + e_j)!,
\end{align*}
where the induction hypothesis was used in the last equality. 
By induction, this proves \eqref{eq:1st}. 

It remains to show that 
\[
\wirt^{\kk}\wirtq^{\elll}(z^{\kk'}\overline{z}^{\elll'}) = 0 \quad \text{if $(\kk, \elll) \neq (\kk', \elll')$.}
\]
Without loss of generality we assume $\kk \neq \kk'$ (the case $\elll \neq \elll'$ follows analogously). 
Since $\abs{\kk} = \abs{\kk'}$, we conclude the existence of an index $j \in \{1,\dots,d\}$ with $\kk_j > \kk'_j$. 
Using the commutativity of partial derivatives, it suffices to show that
\[
\wirt^{\kk_j \cdot e_j}\left[z^{\kk'}\overline{z}^{\elll'} \right]  = 0
.
\]
Since 
\[
\wirt^{\kk_j \cdot e_j}\left[z^{\kk'}\overline{z}^{\elll'} \right] 
= z^{\kk' - \kk'_j \cdot e_j} \cdot \overline{z}^{\elll' - \elll'_j \cdot e_j}\cdot \wirt^{\kk_j}\left[ z_j \mapsto z_j^{\kk'_j} \overline{z_j}^{\elll'_j}  \right],
\]
it suffices to show that 
\[
\wirt^{\kk_j}\left[ z_j \mapsto z_j^{\kk'_j} \overline{z_j}^{\elll'_j}  \right] = 0.
\]
From an iterated application of the product rule \eqref{wirtprod} we get 
\[
\wirt^{\kk'_j}\left[ z_j \mapsto z_j^{\kk'_j} \overline{z_j}^{\elll'_j}  \right] = (\kk'_j)! \cdot \overline{z_j}^{\elll'_j}.
\]
Then we get the claim by noting that 
\[
\wirt \left[ \overline{z_j}^{\elll'_j}\right] \overset{\eqref{wirtconj}}{=} \overline{\wirtq \left[ z_j^{\elll'_j}\right] } \overset{\eqref{wirtdiff}}{=} 0. \qedhere
\]
\end{proof}

We continue by proving \Cref{eq:ts2}.
\begin{lemma}\label{lem:wirtiden2}
For a fixed vector $a \in \CC^d$ and multiindices $\kk, \elll \in \NN_0^d$ with $\abs{\kk}=s$ and $\abs{\elll} = t$
for $s,t \in \NN_0$, it holds 
\[
\wirt^{\kk} \wirtq^{\elll}\left((a^T z)^s(\overline{a^T z})^{t}\right) = s! \cdot t! \cdot a^{\kk} \overline{a}^{\elll}.
\]
\end{lemma}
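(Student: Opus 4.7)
The plan is to exploit the fact that $(a^T z)^s$ is holomorphic in $z \in \CC^d$ whereas $(\overline{a^T z})^t$ is antiholomorphic. By the holomorphy criterion of the Wirtinger calculus recalled just before the lemma, $\wirtq_k$ annihilates the first factor and $\wirt_k$ annihilates the second, and iterated use of the product rule will then decouple the mixed derivative $\wirt^{\kk}\wirtq^{\elll}$ applied to the product into the product of two single-factor computations.

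First I would record the base Wirtinger derivatives of $a^T z = \sum_j a_j z_j$. Since this function is holomorphic in $z$, the identity $\wirt f = f'$ for holomorphic $f$ gives $\wirt_k(a^T z) = a_k$ and $\wirtq_k(a^T z) = 0$. The conjugation rules $\overline{\wirt f} = \wirtq \overline{f}$ and $\overline{\wirtq f} = \wirt \overline{f}$ then yield $\wirt_k(\overline{a^T z}) = 0$ and $\wirtq_k(\overline{a^T z}) = \overline{a_k}$. Next, by induction on $|\elll|$ using the Leibniz rule for $\wirtq_j$ applied to the factorization $(\overline{a^T z})^t = \overline{a^T z} \cdot (\overline{a^T z})^{t-1}$ --- which closely mirrors the proof that $\wirt^{\kk} z^{\kk} = \kk!$ given inside the proof of \Cref{lem:wirtiden1} --- I would establish that $\wirtq^{\elll}\bigl[(\overline{a^T z})^t\bigr] = t! \cdot \overline{a}^{\elll}$ whenever $|\elll| = t$, and symmetrically that $\wirt^{\kk}\bigl[(a^T z)^s\bigr] = s! \cdot a^{\kk}$ when $|\kk| = s$.

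Finally, since Wirtinger derivatives commute (they are linear combinations of the real partial derivatives), I would apply $\wirtq^{\elll}$ first. As $(a^T z)^s$ is holomorphic, a short induction on $|\elll|$ using the product rule shows that $\wirtq_j$ always leaves the first factor untouched, giving
\[
\wirtq^{\elll}\bigl[(a^T z)^s (\overline{a^T z})^t\bigr] = (a^T z)^s \cdot \wirtq^{\elll}\bigl[(\overline{a^T z})^t\bigr] = t! \cdot \overline{a}^{\elll} \cdot (a^T z)^s.
\]
The prefactor $t! \cdot \overline{a}^{\elll}$ is constant in $z$, so applying $\wirt^{\kk}$ afterwards and using the single-factor identity yields $s! \cdot t! \cdot a^{\kk} \cdot \overline{a}^{\elll}$, as claimed. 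There is no genuine obstacle: the entire argument is bookkeeping with multi-indices, and the substantive point is merely to verify the decoupling of $\wirt$- and $\wirtq$-derivatives across the holomorphic/antiholomorphic factorization of the integrand.
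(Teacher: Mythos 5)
Your proposal is correct and uses essentially the same ingredients as the paper's proof: the holomorphy/antiholomorphy of the two factors, the product and chain rules for Wirtinger derivatives, the conjugation rules, and induction on the multi-index lengths. The paper organizes this as a nested double induction (first on $s$ with $t=0$, then on $t$ carrying the full product along), whereas you first isolate the two single-factor identities and then decouple via the product rule; this is only a reorganization of the same argument, and both versions are valid.
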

\begin{proof}
Similar to the proof of \Cref{eq:ts1}, we first perform induction over $s$ and then over $t$ to obtain the claim. 
To begin, let $t = 0$ (i.e., $\elll = 0$) and assume that for all $\kk \in \NN_0^d$ with $\abs{\kk} = s$ we have 
\[
\wirt^{\kk}\left((a^Tz)^s\right) = s! \cdot a^{\kk}.
\]
Let $j \in \{1,\dots,d\}$ be arbitrary and consider
\[
\wirt^{\kk + e_j}\left((a^Tz)^{s+1}\right) = \wirt^{\kk} \wirt \left[ z_j \mapsto (a^Tz)^{s+1}\right].
\]
Using the chain rule \eqref{wirtchain}, we get 
\begin{align*}
\wirt \left[ z_j \mapsto (a^Tz)^{s+1}\right] = (s+1) \cdot (a^Tz)^s \cdot \wirt[z_j \mapsto a^T z].
\end{align*}
Here, we also used \eqref{wirtdiff} and the fact that $w \mapsto w^{s+1}$ is holomorphic, which implies the two properties $\wirt[w \mapsto w^{s+1}] = (s+1) \cdot w^s$ and 
$\wirtq [w \mapsto w^{s+1}] = 0$.
Moreover, since $z_j \mapsto a^T z$ is holomorphic too, we see that $\wirt[z_j \mapsto a^T z] = a_j$.
Overall we get 
\[
\wirt^{\kk + e_j}\left((a^Tz)^{s+1}\right) = (s+1) \cdot a_j \cdot \wirt^{\kk}\left[(a^Tz)^s\right] = (s+1)! \cdot a^{\kk + e_j},
\]
where the induction hypothesis was used for the last equality. 
Since the case $\kk = 0$ (and hence $s= 0$) is trivial, we conclude by induction that
\[
\wirt^{\kk}\left((a^Tz)^s\right) = s! \cdot a^{\kk} \quad \text{for all } \kk \in \NN_0^d \text{ with } \abs{\kk}=s.
\]

We now perform an additional induction over $\abs{\elll} = t$ to show that 
\[
\fall \kk \in \NN_0^d \text{ with } \abs{\kk} = s:\quad \wirt^{\kk} \wirtq^{\elll}\left((a^T z)^s(\overline{a^T z})^{t}\right) = s! \cdot t! \cdot a^{\kk} \overline{a}^{\elll}.
\]
Note that the case $\elll = 0$ (and hence $t= 0$) is shown above.
Therefore, we may assume that the claim holds for all $\elll \in \NN_0^d$ with $\abs{\elll} = t$, where $t \in \NN_0$ is arbitrary but fixed. 
We let $j \in \{1,\dots,d\}$ and $\kk \in \NN_0^d$ with $\abs{\kk} = s$ be arbitrary and consider
\[
\wirt^{\kk} \wirtq^{\elll + e_j}\left((a^T z)^s(\overline{a^T z})^{t + 1}\right) = \wirt^{\kk} \wirtq^{\elll} \wirtq^{e_j}\left((a^T z)^s(\overline{a^T z})^{t + 1}\right).
\]
Note that 
\[
\wirtq^{e_j}\left((a^T z)^s(\overline{a^T z})^{t + 1}\right) = \wirtq\left[ z_j \mapsto (a^T z)^s(\overline{a^T z})^{t + 1}\right].
\]
The product rule \eqref{wirtprod} yields 
\begin{align*}
 \wirtq\left[ z_j \mapsto (a^T z)^s(\overline{a^T z})^{t + 1}\right] &= 
 \wirtq \left[ z_j \mapsto (a^T z)^s\right] \cdot (\overline{a^T z})^{t + 1} + (a^T z)^s \cdot \wirtq\left[ z_j \mapsto (\overline{a^T z})^{t + 1}\right].
\end{align*}
Note that $\wirtq \left[ z_j \mapsto (a^T z)^s\right] = 0$ according to \eqref{wirtdiff}.
Moreover, \eqref{wirtconj} and an application of the chain rule \eqref{wirtchain} similar to above gives us 
\[
\wirtq\left[ z_j \mapsto (\overline{a^T z})^{t + 1}\right] = \overline{\wirt \left[ z_j \mapsto (a^T z)^{t + 1}\right]}
= (t+1) \cdot \overline{a_j} \cdot  (\overline{a^Tz})^t.
\]
Putting everything together and using the induction hypothesis, we get 
\begin{align*}
\wirt^{\kk} \wirtq^{\elll + e_j}\left((a^T z)^s(\overline{a^T z})^{t + 1}\right) = (t+1) \cdot \overline{a_j} \cdot \wirt^{\kk}\wirtq^{\elll}\left((a^T z)^s(\overline{a^T z})^{t}\right)
= (t+1)! \cdot s! \cdot a^{\kk} \overline{a}^{\elll + e_j}.
\end{align*}
The principle of induction thus yields the claim. 
\end{proof}

\medskip

\textbf{Acknowledgements.} 
The authors are grateful to Frank Filbir and Hrushikesh Mhaskar for several helpful and interesting discussions \paull{and for pointing us to the reference \cite{dai2013approximation}}.
FV and PG are grateful for the hospitality of Utrecht University, where this project was initiated during their visit.   
FV and PG acknowledge support by
the German Science Foundation (DFG) in the context of the Emmy Noether junior research
group VO 2594/1-1.
FV acknowledges support by the Hightech Agenda Bavaria. 
PS is supported by NWO Talent programme Veni ENW grant, file number VI.Veni.212.176.
\printbibliography
\end{document}